\documentclass[11pt,letterpaper,oneside,reqno]{amsart}
\usepackage{longtable}
\usepackage{amsmath,amsfonts,amsthm,amscd,cancel,hyperref,stmaryrd,subfig,todonotes,url}
\usepackage{amssymb}
\usepackage{algpseudocode}
\usepackage{anysize,placeins,enumitem}
\usepackage[toc,page]{appendix}
\usepackage{bbm}
\usepackage{bbold}
\usepackage{booktabs}
\usepackage{cancel}
\usepackage{caption}
\usepackage{cases}

\usepackage{color}
\usepackage{epsfig}
\usepackage{pgfplots}
\pgfplotsset{compat=1.18}
\usepackage{float,mathtools}
\usepackage{placeins}
\usepackage{graphicx}
\usepackage{latexsym}

\usepackage{longtable,tabularx,tabulary}
\usepackage{mathrsfs}
\usepackage{mathtools}
\mathtoolsset{showonlyrefs}
\usepackage[singlelinecheck=false]{caption}
\usepackage[final]{showlabels}
\usepackage{subcaption}
\usepackage{textcomp}
\usepackage{textgreek}

\usepackage{thmtools}

\usepackage{titletoc}
\usepackage{todonotes}
\usepackage{ulem}
\usepackage{soul}
\usepackage{verbatim}
\usepackage{xcolor}
\usepackage{siunitx}

\DeclarePairedDelimiter\floor{\lfloor}{\rfloor}

 \renewcommand{\O}{{\mathcal{O}}}

\newcommand{\R}{{\mathbb{R}}}
\newcommand{\C}{{\mathbb{C}}}

\newcommand{\Z}{{\mathbb{Z}}}

\renewcommand{\Re}{{\mathfrak{Re}}}

\makeatletter
\let\@wraptoccontribs\wraptoccontribs
\makeatother

\newcommand{\s}{\sigma}

\numberwithin{equation}{section}
\newtheorem{theorem}{Theorem}
 \newtheorem{corollary}{Corollary}[theorem]
 \newtheorem{lemma}[theorem]{Lemma}

  \theoremstyle{remark}
 \newtheorem*{remark}{Remark}

\newcommand*{\thmref}[1]{Theorem~\ref{#1}}

\newcommand*{\corref}[1]{Corollary~\ref{#1}}

\reversemarginpar 

\makeatletter  
\def\l@subsection{\@tocline{2}{0pt}{2.5pc}{6pc}{}} \makeatother

\begin{document}

\title[Exponential Sums and Approximate Functional Equations for Zeta]{Explicit Exponential Sum Estimates and Approximate Functional Equations for the Zeta Function}

\author[N. Dhiman]{Natasha Dhiman}
\address{Department of Mathematics and Computer Science\\
University of Lethbridge\\
4401 University Drive\\
Lethbridge, Alberta\\
T1K 3M4 Canada}
\email{natasha.dhiman@uleth.ca}

\author[H. Kadiri]{Habiba Kadiri}
\address{Department of Mathematics and Computer Science\\
University of Lethbridge\\
4401 University Drive\\
Lethbridge, Alberta\\
T1K 3M4 Canada}
\email{habiba.kadiri@uleth.ca}

\author[E. Quesada-Herrera]{Emily Quesada-Herrera}
\address{Department of Mathematics and Computer Science\\
University of Lethbridge\\
4401 University Drive\\
Lethbridge, Alberta\\
T1K 3M4 Canada}
\email{emily.quesadaherrera@uleth.ca}

\begin{abstract}
We show an explicit version of the Van der Corput truncated Poisson summation formula (B-process). By using refined explicit exponential sum estimates, this improves the error term of previous explicit results by Patel and Yang (2024) and Arias de Reyna (2024). As an application, we obtain fine explicit estimates for the error terms in approximate functional equations for the Riemann zeta function, improving on previous explicit results of Simoni\v{c} (2020) in certain ranges.
\end{abstract}

\subjclass[2010]{Primary 11L07, 11M06; Secondary 11Y35
}

\keywords{Van der Corput estimate, exponential sums, Poisson summation, approximate functional equation, Riemann zeta function, explicit results}

\maketitle

\tableofcontents

\section{Introduction}
The Riemann zeta function $\zeta(s)$ can be expressed in terms of its Dirichlet series 
\[\zeta(s) = \sum_{ n\ge 1} n^{-s}\] 
when $\sigma>1$, where $s=\sigma+it$ is a complex number. The \textit{functional equation} for $\zeta(s)$ implies that we can also express it as a convergent Dirichlet series when $\sigma <0$:  
\[
\zeta(s) = \chi(s)\sum_{1\le n} n^{s-1}, \quad \text{where }\ 
    \chi(s) = 2^s \pi^{s-1} \Gamma(1-s) \sin\left(\frac{\pi s}{2}\right).
\]
Understanding $\zeta(s)$ in the \textit{critical strip} $\{s\in \C: 0< \s<1\}$, where all non-trivial zeros of $\zeta (s)$ lie, is a more subtle problem in analytic number theory.
An \textit{approximate functional equation} (AFE) approximates $\zeta(s)$ in the critical
strip by finite Dirichlet sums. These approximate formulas are classical tools for bounding zeta inside the critical strip, for establishing zero-density estimates, and for Dirichlet divisor problems (see, for instance, \cite{Titchmarsh}). 
\smallskip

Hardy and Littlewood proved in \cite{HL1921, HL1923}
two approximations in the critical strip: 
\begin{equation}\label{eq-AFE1}
    \zeta(s) = \sum_{1\le n \le x} n^{-s} - \frac{x^{1-s}}{1-s} + \O\big(x^{-\sigma}\big),
\end{equation}
valid for $\sigma \ge \sigma_0 > 0$ and $|t| \le 2\pi x/C$ (where $C>1$); and, for $0 < \sigma < 1$ and $2\pi x y = |t|$, 
\begin{equation}\label{eq-AFE2}
    \zeta(s) = \sum_{1\le n \le x} n^{-s} + \chi(s) \sum_{1\le n \le y} n^{s-1}
    + \O\big( x^{-\sigma}\big) + \O\big( |t|^{1/2-\sigma} \, y^{\sigma-1}\big).
\end{equation}
We refer to
\eqref{eq-AFE1} as the AFE of the ``first kind" and to \eqref{eq-AFE2} as the AFE of the ``second kind".%
\footnote{We caution the reader that this terminology is non-standard.}
Note that, in \cite{HL1921}, a previous version of \eqref{eq-AFE2} had $(\log t)$-factors in the error terms. 
\smallskip

We prove explicit versions of \eqref{eq-AFE1} and \eqref{eq-AFE2} by means of
explicit B-estimates for oscillatory exponential sums of the shape
\begin{equation}\label{def-oscil-exp-sum}
\sum_{a<n\leq b} g(n) e^{2\pi i f(n)},
\end{equation}
where $f$ and $g$ are real-valued functions with $g$ positive, satisfying
monotonicity conditions up to their second derivatives. Our results impose no
restriction on the length of the interval $(a,b)$.
\medskip
\paragraph{\bf Notation:} \ \\
Throughout the article, we write \[f=\mathcal{O}^*(g)\ \text{ if }\ |f(x)|\leq g(x)\ \text{for all} \ x \text{ in the relevant domain}.\]
We also use Vinogradov notation $f\ll g$ and $f=\O(g)$ interchangeably. 
\medskip 

\subsection{An explicit van der Corput approximate Poisson summation formula:} \ \\
The classical Poisson summation formula states that, under reasonable decay and regularity conditions for a function $F:\R\to\C$, we have 
\begin{equation}\label{eq:poisson}
    \sum_{n\in\Z} F(n) = \sum_{\nu\in\Z} \widehat F(\nu), 
\end{equation}
where $\widehat F(y):=\int_\R F(x)e^{-2\pi i x y}\, dx$ is the Fourier transform of $F$. 
Meanwhile, the B-process of van der Corput transforms an exponential sum into a shorter one, indexed by the integers in the range of $f'$. The first step in the B-process is a truncated version of \eqref{eq:poisson} due to van der Corput (see, for instance, \cite[Section 8.3]{Iwaniec-Kowalski}). 
For a phase function $f(x)$ with a monotonic derivative, such an approximate Poisson formula transforms the sum \eqref{def-oscil-exp-sum} into a sum of truncated Fourier integrals over a much shorter range. 
\smallskip 

Our first main result (\thmref{thm-VDC}) is of the shape:
\begin{equation}\label{van-der-Corput}
\sum_{a<n\leq b} g(n) e^{2\pi i f(n)} =  \sum_{\nu =0}^{\lfloor f'(a) \rfloor} \int_{a}^{b} g(x) e^{2\pi i (f(x)-\nu x)} dx + T_0(a,b),
\end{equation}
where $T_0$ is an explicitly bounded error term.
Classically, when $f'(b) \to 0$ while $f'(a)$ is large, we can prove the asymptotic bound as in \cite[Lemma 4.7]{Titchmarsh} or \cite[Proposition 8.7]{Iwaniec-Kowalski}:
\begin{equation}
    \label{asymp-T0-Titchmarsh}
    T_0(a,b) \ll_g \log(f'(a)).
\end{equation}
In Theorem \ref{thm-VDC}, we prove two distinct explicit estimates for $T_0(a,b)$ depending on the size of $f'(a)$. 
We state it here for the case when $g(n)$ is constant:
\begin{corollary}[An explicit Poisson summation formula]
\label{cor-VDC}
Let $f(x) $  be a real function with continuous, positive and strictly decreasing derivative $f'(x)$ in $[a,b]$. If $a,b \in \mathbb{Z} + \frac{1}{2}$, then for
$\delta = 1 - (f'(a)-\lfloor f'(a) \rfloor)$, we have:
\begin{equation}\label{eq:cor-VDC-1}
\sum_{a<n\leq b}e^{2\pi i f(n)} 
=\sum_{\nu =0}^{\lfloor f'(a) \rfloor}\int_{a}^{b} e^{2\pi i (f(x)-\nu x)} dx 
+ \mathcal{O}^*\big(T_0(a,b)\big)
\end{equation}
where
\begin{equation}\label{eq:cor-VDC-1-T0}
\begin{aligned}
T_0(a,b)
=& \frac{1}{\pi}\Big(\log(1+f'(a))
+\log(1+\lfloor f'(a) \rfloor) +\gamma 
\Big. \bigg. \\
\bigg. \big. & -\frac1{2(1+\lfloor f'(a) \rfloor)} -\frac1{2(1+ f'(a) )} 
- \frac{\Gamma'}{\Gamma}(\delta)+\log 2 + \frac{1}{f'(a)} \Big) .
\end{aligned}
\end{equation}
If, in addition, we assume the function $f \in C^2(a,b)$ and $|f''|$ is positive and decreasing on $[a,b]$, then, we replace the definition of $T_0(a,b)$ with 
\begin{equation}\label{eq:cor-VDC-partII}
T_0(a,b)
= 
  \frac{\log 2}{2\pi} + 
  \frac{|f'(b)| \mathcal{B}_{f'}(b) + |f'(a)| \mathcal{B}_{f'}(a) }{2\pi}
  +\frac{1}{2\pi (f'(a))} 
  + \frac{|f''(a)|}{2\pi^2} \Big( \frac{\mathcal{E}_1(\delta,f'(a))}{ f'(a)} 
  + \mathcal{E}_2(\delta,f'(a)) \Big)  ,
 \end{equation}
where $\mathcal{B}_f'$, $\mathcal{E}_1$ and $\mathcal{E}_2$ are respectively defined in \eqref{def:B_f(Z+1/2)}, \eqref{auxilliary_error1} and \eqref{auxillary_error2} with $N=0$.
\end{corollary}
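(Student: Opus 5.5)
This corollary is the special case $g\equiv 1$ of \thmref{thm-VDC}, so the plan is to reprove the mechanism behind that theorem in this simpler setting and track the constants.

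\textbf{Step 1: reduction to a sawtooth integral.} With $a,b\in\Z+\tfrac12$, write the sum by Euler--Maclaurin/Stieltjes against $\lfloor x\rfloor$. Using $\psi(x)=x-\lfloor x\rfloor-\tfrac12$, which vanishes at half-integers so that no boundary terms appear, we get
\[
\sum_{a<n\le b} e^{2\pi i f(n)}=\int_a^b e^{2\pi i f(x)}\,dx-\int_a^b e^{2\pi i f(x)}\,d\psi(x).
\]
We then insert the truncated Fourier expansion
\[
\psi(x)=-\sum_{1\le|\nu|\le N}\frac{e^{2\pi i\nu x}}{2\pi i\nu}+R_N(x),
\]
with $N=\lfloor f'(a)\rfloor$, or equivalently work with the partial sums $\sum_{|\nu|\le N}e^{2\pi i\nu x}$ (the Dirichlet kernel). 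The frequencies $\nu=0,\dots,\lfloor f'(a)\rfloor$ produce the main terms $\int_a^b e^{2\pi i(f(x)-\nu x)}dx$. The negative frequencies and the frequencies $\nu>f'(a)$ have no stationary point, since $f'(x)-\nu$ keeps a constant sign, bounded away from $0$.

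\textbf{Step 2: the first bound \eqref{eq:cor-VDC-1-T0}.} For each non-stationary frequency apply the first-derivative (Kusmin--Landau) test: $f'-\nu$ is monotone, so $\bigl|\int_a^b e^{2\pi i(f-\nu x)}\bigr|\le 1/(\pi\min|f'-\nu|)$.
\begin{itemize}
\item For $\nu=-m<0$ we have $\min|f'+m|\ge m$, so these terms contribute at most $\frac{1}{\pi}\sum_m\frac{1}{m}$ up to the cutoff.
\item For $\nu=\lfloor f'(a)\rfloor+k$ with $k\ge1$ we have $\min|f'-\nu|\ge \nu-f'(a)=k-1+\delta$, giving $\frac1\pi\sum_k\frac1{k-1+\delta}$.
\end{itemize}
These two sums do not converge individually. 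The fix is to pair $\nu$ with $-\nu$, keeping the $1/\nu$ weights from the Fourier series of $\psi$ and integrating by parts once more, or to cut the tail at a parameter $M\to\infty$ and control the remainder $R_M$ by the standard bound $|R_M(x)|\ll\min(1,1/(M\|x\|))$.

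The finite pieces then evaluate to digamma/harmonic expressions:
\begin{itemize}
\item $\sum_{k\ge1}\bigl(\frac1{k-1+\delta}-\frac1k\bigr)=-\frac{\Gamma'}{\Gamma}(\delta)-\gamma$;
\item harmonic sums up to $f'(a)$ and up to $\lfloor f'(a)\rfloor$ give $\log(1+f'(a))+\log(1+\lfloor f'(a)\rfloor)+2\gamma-\frac1{2(\cdot)}-\frac{1}{2(\cdot)}$ via $H_n\le\log(n+1)+\gamma-\frac1{2(n+1)}$;
\item the $\log 2$ comes from the alternating/pairing part;
\item the $1/f'(a)$ comes from the $\nu=0$ edge effect.
\end{itemize}
Bookkeeping the $\gamma$'s yields exactly the stated constant.

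\textbf{Step 3: the refined bound \eqref{eq:cor-VDC-partII}.} Under the extra hypothesis that $|f''|$ is decreasing, replace the first-derivative bound by one more integration by parts for each non-stationary frequency:
\[
\int_a^b e^{2\pi i(f-\nu x)}\,dx=\Bigl[\frac{e^{2\pi i(f-\nu x)}}{2\pi i(f'-\nu)}\Bigr]_a^b+\frac{1}{2\pi i}\int_a^b\frac{f''\,e^{2\pi i(f-\nu x)}}{(f'-\nu)^2}\,dx.
\]
The boundary terms at $b$ and at $a$, summed over $\nu$, collapse to the sawtooth-type series defining $\mathcal{B}_{f'}(b)$ and $\mathcal{B}_{f'}(a)$. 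These carry the factors $|f'(b)|$ and $|f'(a)|$ via $\sum_\nu \frac{1}{\nu(f'-\nu)}=\frac{1}{f'}\sum\bigl(\frac1\nu+\frac1{f'-\nu}\bigr)$. The integral terms are bounded by $|f''(a)|/(2\pi)$ times $\sum_\nu\frac{1}{2\pi\,\nu\,(f'(a)-\nu)^2}$-type sums, which are what $\mathcal{E}_1/f'(a)$ and $\mathcal{E}_2$ encode (with $N=0$).

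\textbf{Main obstacle.} The hardest part is the convergence and interchange of summation in Steps 2 and 3. The raw Fourier series of $\psi$ converges only conditionally, so the truncation at $M$ and the limit $M\to\infty$ must be done uniformly, without losing constants. I would handle this by first proving everything with $\psi$ replaced by its Fejér-smoothed partial sums. Every bound used is uniform in $M$, so the limit can then be taken, using that $\psi$ is of bounded variation and $a,b$ are half-integers, where the partial sums converge to $\psi=0$.
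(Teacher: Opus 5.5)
\textbf{There is a genuine gap in Part~I.} Your framework matches the one behind \thmref{thm-VDC}: Euler--Maclaurin with boundary terms vanishing at half-integers, the Fourier expansion of the sawtooth, and splitting the frequencies at $\lfloor f'(a)\rfloor$. The decisive step, however, is missing.

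In Step~2 you apply the first-derivative test to the \emph{unweighted} integrals $\int_a^b e^{2\pi i(f(x)-\nu x)}\,dx$. This gives bounds $\frac{1}{\pi m}$ and $\frac{1}{\pi(k-1+\delta)}$, which are not summable. You list two remedies but carry out neither, and \eqref{eq:cor-VDC-1-T0} is then reached by ``bookkeeping'' rather than by a computation.

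The second remedy cannot give the statement. After truncation at $M$ the unweighted bounds add up to about $\frac{2}{\pi}\log M$, so they are not uniform in $M$, contrary to your last paragraph. Balancing them against $|R_M(x)|\ll\min(1,1/(M\|x\|))$ gives a bound that grows with $f(b)-f(a)$ and has unspecified constants.

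The interchange of sum and integral is also not the real obstacle. The partial sums of the sawtooth series are uniformly bounded, so bounded convergence suffices and Fej\'er smoothing is unnecessary. The real difficulty is getting \emph{summable} bounds frequency by frequency. Finally, there is no ``$\nu=0$ edge effect'' producing $\frac{1}{\pi f'(a)}$; that term and the $\log 2$ come from the same endpoint sums, described next.

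\textbf{The missing idea, which is what the paper does.} Integrate $\psi$ against $d\bigl(e^{2\pi i f(x)}\bigr)=2\pi i f'(x)e^{2\pi i f(x)}\,dx$ and keep the Fourier weight $\frac{1}{2\pi i\nu}$. Each non-stationary frequency then contributes $\frac{1}{\nu}\int_a^b f'(x)e^{2\pi i(f(x)\mp\nu x)}\,dx$. Lemma~\ref{Lemma 2 arias}, applied to the monotone ratio $f'(x)/\bigl(2\pi(\nu\mp f'(x))\bigr)$, gives:
\begin{itemize}
\item $\frac{1}{\pi}\bigl(\frac{1}{\nu-f'(a)}-\frac{1}{\nu}\bigr)$ for $\nu>\lfloor f'(a)\rfloor$, summing exactly to $\frac{1}{\pi}\bigl(\frac{\Gamma'}{\Gamma}(\lfloor f'(a)\rfloor+1)-\frac{\Gamma'}{\Gamma}(\delta)\bigr)$;
\item $\frac{1}{\pi}\bigl(\frac{1}{\nu}-\frac{1}{\nu+f'(a)}\bigr)$ for the other sign and $\nu\ge1$, summing exactly to $\frac{1}{\pi}\bigl(\frac{\Gamma'}{\Gamma}(f'(a)+1)+\gamma\bigr)$.
\end{itemize}
Together these produce all the logarithmic, $\gamma$, $\frac{\Gamma'}{\Gamma}(\delta)$ and $-\frac{1}{2(\cdot)}$ terms.

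For $1\le\nu\le\lfloor f'(a)\rfloor$ the paper integrates by parts back to recover the main terms. The resulting endpoint terms are $\frac{1}{2\pi}\bigl|\sum_{\nu\le\lfloor f'(a)\rfloor}(-1)^\nu/\nu\bigr|$ at $x=a$ and at $x=b$, since $e^{-2\pi i\nu x}=(-1)^\nu$ at half-integers. Each is at most $\frac{1}{2\pi}\bigl(\log 2+\frac{1}{f'(a)}\bigr)$, which is the source of $\frac{1}{\pi}\bigl(\log 2+\frac{1}{f'(a)}\bigr)$.

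Your own truncation at $N=\lfloor f'(a)\rfloor$ can be repaired in the same spirit. The partial sum $P_N$ vanishes at half-integers, so only the $N$ frequencies $e^{2\pi i(f+\nu x)}$, $1\le\nu\le N$, need the unweighted test, at cost at most $\frac{1}{\pi}\sum_{\nu\le N}\frac{1}{\nu+f'(b)}$. The tail $R_N$ must be handled in the weighted form above. Since $\sum_{\nu\le N}\frac{1}{\nu+f'(a)}\le\log 2$, this gives a bound no larger than \eqref{eq:cor-VDC-1-T0}.

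\textbf{Step~3 has the same defect.} The unweighted boundary terms $\frac{e^{2\pi i(f(x)-\nu x)}}{2\pi i(f'(x)-\nu)}$, summed over $\nu$, are not $f'(x)\mathcal{B}_{f'}(x)$. Writing $\frac{1}{f'-\nu}=\frac{f'}{\nu(f'-\nu)}-\frac{1}{\nu}$ leaves endpoint series $\sum_\nu e^{-2\pi i\nu x}/\nu$ that you have not accounted for. The paper instead integrates by parts inside the weighted integrals $I_h(\mp\nu)$ with $h=f'$, so the endpoint sums are exactly those bounded by $Z_0$ and $Z_1$ in Lemma~\ref{lemma_bnd_S_plus_minus}.
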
%
Recently, Patel \cite[Lemma 2.26]{PatelPHD} and Patel with Yang \cite[Lemma 2.1]{PatelYang2024}, as well as Arias de Reyna \cite[Lemma 4]{dereyna2024approximationzetafunctiondirichlet}, establish similar bounds 
with an explicit leading factor of $\displaystyle \frac{3}{\pi} \log(f'(a)+1)$ (with secondary error terms of constant size). 
In comparison, \corref{cor-VDC} gives two approximations, depending on the magnitude of $f'(a)$, namely:
\begin{enumerate}
\item[Part I.\ ] $\displaystyle T_0(a,b) \approx \frac{ 2 }{\pi}\,\log(1+f'(a)) $ when $f'(a)$ is not too large,
\item[Part II.] $\displaystyle T_0(a,b) \approx\frac{\log 2 }{2\pi}$ when $f'(a)$ is large. 
\end{enumerate}
More precisely, we note that Part II gives the uniform bound below.\footnote{Part II gives the uniform bound:
\[
T_0(a,b) \ll \left(1+\frac{1}{f'(a)}\right) \left(1+\frac{|f''(a)|}{\delta^3}\right).
\] 
The factor of ${\delta^{-3}}$, instead of ${\delta^{-1}}$ as in Part 1 and other works, is a tradeoff we choose to remove the $\log(f'(a))$ terms. In our applications, we take $\delta=1/2$. 
}
We prove Theorem \ref{thm-VDC} directly for a generic weight function $g(x)$, rather than deriving the weighted case from the constant $g(n)=1$ case via partial summation (as done in \cite[Lemma 4.10]{Titchmarsh} or \cite[Lemma 5]{dereyna2024approximationzetafunctiondirichlet}). We observed that this discards significant oscillatory cancellation. Instead, we incorporated $g(x)$ directly into the stationary-phase integral analysis, obtaining sharper, fully explicit bounds.
\smallskip

Relative to previous work of \cite[Lemma 6-7]{Karatsuba-Korolev-2007}, \cite[Lemma 2.26]{PatelPHD}, \cite[Lemma 2.1]{PatelYang2024}, as well as \cite[Lemma 4-5]{dereyna2024approximationzetafunctiondirichlet}, \thmref{thm-VDC} and \corref{cor-VDC} improve the leading logarithmic dependence in both parts. In Part I, we reduce the constant in front of the $\log f'(a)$-term, from $\frac{3}{\pi}$ to $\frac{2}{\pi}$, and in Part II, we remove the $\log f'(a)$ dependence entirely, in both cases under derivative conditions typical of applications.\footnote{More precisely, as $-\frac{\Gamma'}{\Gamma}(\delta) = \frac{1}{\delta} - \gamma + O(\delta)$, we require $\delta \gg 1/\log(1+f'(a))$, a condition satisfied here.} 
The generic case, \thmref{thm-VDC}, is stated in Section \ref{Section-Theorem-Bestimate} and proved in Section \ref{Section-Proof-Theorem-Bestimate}.
\smallskip

Theorem \ref{thm-VDC} also refines the explicit Poisson summation formula of KAratsuba and Korolev  \cite[Theorem]{Karatsuba-Korolev-2007} and Patel and Yang \cite[Lemma 2.3]{PatelYang2024}. The latter is used to obtain explicit bounds for $|\zeta(1/2+it)|$. The following version, proved in Section \ref{section-proof-cor-Explicit_B_estimate}, reduces the coefficient of the $\log (f'(a)-f'(b))$ term:
\begin{corollary}[A van der Corput B-process] \label{Explicit_B_estimate}
    Let $a, b \in \mathbb{Z} + \frac{1}{2}$ with $a < b$. Let $f(x)$ be a real-valued, three times differentiable function on $[a, b]$ with continuous, positive, and strictly decreasing derivative $f'(x)$ with $0 < f'(b) < 1$. Let $\delta = 1 - (f'(a)-\lfloor f'(a) \rfloor)$. For each integer $\nu \in \left(0, \lfloor f'(a) \rfloor\right]$, let $x_\nu \in [a, b]$ be the unique point such that $f'(x_\nu) = \nu$. Furthermore, suppose that for all $x \in [a, b]$:$$\lambda_2 \le \vert{}f''(x)\vert{} \le h_2 \lambda_2 \quad \text{and} \quad \vert{}f'''(x)\vert{} \le h_3 \lambda_3.$$
Then
\begin{multline}
\bigg| \sum_{a < n \le b} e^{2\pi i f(n)} - \sum_{ 0 < \nu  \le \lfloor f'(a) \rfloor} \frac{e^{2\pi i (f(x_\nu) - \nu x_\nu - 1/ 8)}}{|f''(x_\nu)|^{1/2}} \bigg| 
\le \frac{2.686}{\sqrt{\lambda_2}} + \frac{2 \cdot 3^{2/3}}{\pi^{2/3}} h_2 h_3^{1/3} (b - a) \lambda_3^{1/3} \\
+ \frac{2}{\pi} \log(f'(a) - f'(b)) + 
\mathcal{D}_f(a,b),\end{multline}
with
\begin{equation}
\mathcal{D}_f(a,b) = \frac{|f'(a)|\mathcal{B}_{f'}(a) +  |f'(b)|\mathcal{B}_{f'}(b)}{2\pi}  + 
\frac{h_2\lambda_2 \mathcal{E}_1 (\delta, f'(a) )}{2\pi^2 f'(a)}  
+ \frac{h_2\lambda_2 \mathcal{E}_2 (\delta, f'(a) )}{2\pi^2}  
+ \frac{1}{2\pi f'(a)} + 1.251 + \frac{\log 2}{2\pi}. \end{equation}
\end{corollary}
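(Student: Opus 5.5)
The natural route is to combine Part II of \corref{cor-VDC} with an explicit stationary phase estimate for each of the truncated Fourier integrals. First, \corref{cor-VDC} (second form of $T_0$, valid since $|f''|$ is bounded between $\lambda_2$ and $h_2\lambda_2$; we use $|f''(a)|\le h_2\lambda_2$) gives
\[
\sum_{a<n\le b} e^{2\pi i f(n)} = \sum_{\nu=0}^{\lfloor f'(a)\rfloor} I_\nu + \mathcal{O}^*\big(T_0(a,b)\big),
\qquad I_\nu=\int_a^b e^{2\pi i (f(x)-\nu x)}\,dx .
\]
Bounding $T_0$ from \eqref{eq:cor-VDC-partII} by $h_2\lambda_2$ in place of $|f''(a)|$ already produces every term of $\mathcal{D}_f(a,b)$ except the constant $1.251$. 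That constant should therefore come from the analysis of the integrals, together with the $\nu=0$ term and the endpoint contributions.

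Next, I would treat each $I_\nu$ with $0<\nu\le\lfloor f'(a)\rfloor$ using an explicit stationary phase lemma for the phase $F_\nu(x)=f(x)-\nu x$. Its unique stationary point is $x_\nu$, where $F_\nu''=f''<0$. The main term is $e^{2\pi i(F_\nu(x_\nu)-1/8)}|f''(x_\nu)|^{-1/2}$. There are two kinds of error terms:
\begin{enumerate}
\item[(i)] a local error from the Taylor expansion to third order around $x_\nu$, controlled by $\lambda_3$, $h_2$, $h_3$;
\item[(ii)] boundary terms of size roughly $\frac{1}{2\pi}\big(\frac{1}{\nu-f'(b)}+\frac{1}{f'(a)-\nu}\big)$ coming from integration by parts (the first-derivative test) at the endpoints.
\end{enumerate}
The $\nu=0$ integral has no stationary point, since $f'>0$. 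By the first-derivative test it is $\mathcal{O}^*\big(1/(\pi f'(b))\big)$-type or, better, is absorbed by a second-derivative bound $\ll \lambda_2^{-1/2}$. For the local errors I would cut a window $|x-x_\nu|\le \eta$. On this window I replace $F_\nu$ by its quadratic approximation, which costs $\ll h_3\lambda_3\eta^4$-type terms weighted appropriately. Summing over $\nu$ means summing over roughly $(b-a)h_2\lambda_2$ values. Optimizing $\eta$ in terms of $\lambda_3$ should give the term $\frac{2\cdot 3^{2/3}}{\pi^{2/3}}h_2h_3^{1/3}(b-a)\lambda_3^{1/3}$. The Fresnel tails and the $\nu$'s nearest the ends, where $x_\nu$ is close to $a$ or $b$, should give $2.686/\sqrt{\lambda_2}$ through second-derivative (Kusmin--Landau type) bounds.

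Third, I would sum the endpoint terms (ii) over $\nu$. Since $0<f'(b)<1$, $\nu-f'(b)\ge 1-f'(b)$. With $\nu$ running from $1$ to $\lfloor f'(a)\rfloor$, the two harmonic-type sums give $\frac{1}{2\pi}\cdot 2\big(\log(f'(a)-f'(b))+O(1)\big)$ each side, for a total of $\frac{2}{\pi}\log(f'(a)-f'(b))$ plus constants. This matches the improved coefficient. The $\frac{1}{\delta}$ and $\frac{1}{1-f'(b)}$ singularities from the extreme $\nu$ should not be summed in this way. Instead I would bound those one or two exceptional integrals trivially by the second-derivative test, contributing $\lambda_2^{-1/2}$ terms.

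The main obstacle is bookkeeping the explicit constants. I need the local stationary phase error and the endpoint terms to combine so that the logarithm carries exactly $\frac{2}{\pi}$ and the leftover constants total at most $1.251$. This requires carefully splitting the $\nu$ range, separating the extreme terms, and using sharp harmonic-sum estimates $\sum_{k\le K}1/(k+\theta)\le \log K+\text{const}$. I would first try the stationary phase lemma in the form used by Patel--Yang \cite[Lemma 2.3]{PatelYang2024}, replacing only their Poisson step with \corref{cor-VDC}, and then verify the constants numerically.
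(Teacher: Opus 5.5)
This is essentially the paper's argument: Corollary~\ref{cor-VDC} (Part~II, with $|f''(a)|\le h_2\lambda_2$) replaces Patel--Yang's Poisson step and yields all of $\mathcal{D}_f(a,b)$ except the constant $1.251$; Patel--Yang's stationary-phase lemma is applied to the interior frequencies $1\le\nu\le\lfloor f'(a)\rfloor-1$, and the summed endpoint terms give $\frac{2}{\pi}\log(f'(a)-f'(b))+\frac{2}{\pi}(\gamma+2\log 2)$, where $\frac{2}{\pi}(\gamma+2\log 2)=-\frac{2}{\pi}\frac{\Gamma'}{\Gamma}(1/2)\approx 1.2500$ is the $1.251$; and only the two edge integrals $\nu=0$ and $\nu=\lfloor f'(a)\rfloor$ are bounded by Kershner's second-derivative estimate, together giving $2.686/\sqrt{\lambda_2}$ (so there is no budget for excluding further frequencies such as $\nu=1$). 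Two cautions: first, Part~II of Corollary~\ref{cor-VDC} requires $|f''|$ to be \emph{decreasing}, which $\lambda_2\le|f''|\le h_2\lambda_2$ does not give (the paper's proof also invokes Part~II without comment); second, your window heuristic, with in-window cost $h_3\lambda_3\eta^4$ balanced against $1/(\lambda_2\eta)$ tails, yields only Titchmarsh's $\lambda_2^{-4/5}\lambda_3^{1/5}$ per frequency, not the $\lambda_3^{1/3}/\lambda_2$ behind the $(b-a)\lambda_3^{1/3}$ term, so you do need Patel--Yang's sharper lemma there, as you ultimately propose.
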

For simplicity, we reiterate the uniform bound
\[\mathcal{D}_f(a,b) \ll \left(1+\frac{1}{f'(a)}\right) \left(1+\frac{h_2 \lambda_2}{\delta^3}\right). \]
Replacing \cite[Lemma 2.3]{PatelYang2024} with this Corollary could refine Patel and Yang's subconvexity bound for zeta on the critical line \cite[Theorem 1.1]{PatelYang2024}, as 
van der Corput is applied in their Lemma 3.4, which evaluates the contribution from sums of the shape $\sum_{ t^{7/17} \ll n\ll \sqrt{ t } } n^{-1/2-it}$. 
\smallskip 

As a natural application of Theorem \ref{thm-VDC}, we establish explicit approximate functional equations of both the first and second kind for the Riemann zeta function. In this context, the sum \eqref{def-oscil-exp-sum} takes 
\[
g(u) = u^{-\s} \ \text{and}\ f(u)=\frac{t}{2\pi }\log u. 
\]
For the approximate functional equation of the first kind, Part~I is the
appropriate estimate: with $a \asymp t$ we have
$f'(a) = \frac{t}{2\pi a} \approx \frac{1}{2\pi} < 1$, so that
$\lfloor f'(a) \rfloor = 0$ and the sum of Fourier integrals in
\eqref{van-der-Corput} reduces to a single term.
For the second kind we apply Part~II, since $a \asymp \sqrt{t}$ gives
$f'(a) = \frac{t}{2\pi a} \asymp \sqrt{t}$.
\subsection{An explicit approximate functional equation of first kind for $\zeta(s)$:}\ \\
We establish here an explicit version of \eqref{eq-AFE1}. 
\begin{corollary}\label{cor:all_t}
Let $s = \sigma + it$ with $\s \in (0,1]$ and $t \geq  t_0 \geq 14 $. We have 
\begin{equation}\label{eq:AF1cor}
\bigg|  \zeta(s) - \sum_{ 1\leq n \leq t } n^{-s} \bigg|  \leq c_0\,t^{-\s},
\end{equation} 
where $c_0$ is defined in \eqref{eq:def-c0} and is computable for chosen values of $t_0$. For instance, 
\begin{align}
& c_0 = 1.2552
\ \text{for} \ t_0=14.13472,    \\
&c_0=1.2127 \ \text{for} \  t_0=3\cdot10^{12}.
\end{align}
\end{corollary}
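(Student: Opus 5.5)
We plan to derive Corollary~\ref{cor:all_t} from Part~I of Theorem~\ref{thm-VDC}, applied with weight $g(u)=u^{-\sigma}$ and phase $f(u)=-\frac{t}{2\pi}\log u$, so that $e^{2\pi i f(n)}g(n)=n^{-s}$. We take the conjugate if the theorem needs $f'>0$; it is then $f(u)=\frac{t}{2\pi}\log u$ with $f'(u)=\frac{t}{2\pi u}$, positive and strictly decreasing. The tail is handled through the half-integer endpoint $a=\lfloor t\rfloor+\tfrac12$ and $b=N+\tfrac12\to\infty$.

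\textbf{Step 1: reduction to a tail sum.} For $\sigma>1$ we write
\[
\zeta(s)=\sum_{n\le t}n^{-s}+\sum_{a<n\le b}n^{-s}+\sum_{n>b}n^{-s}.
\]
We then use the standard Euler--Maclaurin continuation
\[
\sum_{n>b}n^{-s}=\frac{b^{1-s}}{s-1}+\frac{b^{-s}}{2}\cdot(\text{sign})+O\big(|s|\,b^{-\sigma}\big).
\]
This remains valid for $\sigma>0$ by analytic continuation, and the $O$-term tends to $0$ as $b\to\infty$ for fixed $s$.

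\textbf{Step 2: apply Part~I.} Since $f'(a)=t/(2\pi a)<1/(2\pi)<1$, we have $\lfloor f'(a)\rfloor=0$ and $\delta=1-f'(a)$. Part~I then gives
\[
\sum_{a<n\le b}n^{-s}=\int_a^b x^{-s}\,dx+\mathcal{O}^*\big(T_0(a,b)\big),
\]
where the generic-$g$ version carries a factor of the size of the weight at the left endpoint, i.e.\ $T_0\approx a^{-\sigma}\cdot(\text{constant depending on }f'(a),\delta)$. Evaluating the integral gives $\frac{b^{1-s}-a^{1-s}}{1-s}$. Its $b$-part cancels the $\frac{b^{1-s}}{s-1}$ term from Step~1, so letting $b\to\infty$ leaves
\[
\zeta(s)-\sum_{n\le t}n^{-s}=\frac{a^{1-s}}{s-1}+\mathcal{O}^*\big(T_0\big).
\]

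\textbf{Step 3: bound the main term.} We have
\[
\Big|\frac{a^{1-s}}{s-1}\Big|\le \frac{a^{1-\sigma}}{t}\le \frac{(t+\tfrac12)^{1-\sigma}}{t}.
\]
This is $\le t^{-\sigma}(1+\tfrac1{2t})^{1-\sigma}$, hence a constant times $t^{-\sigma}$. Similarly $a^{-\sigma}\le t^{-\sigma}(1+\tfrac{1}{2t})^{0}\cdot(\text{up to }(t/a)^{\sigma}\le1)$, using $a\ge t-\tfrac12$ carefully. In the worst case $a\ge t-\tfrac12$ one gets a factor $(1-\tfrac1{2t_0})^{-\sigma}$.

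\textbf{Step 4: assemble the constant.} Collecting terms gives $c_0$ as a function of $t_0$, with $f'(a)\in[\frac{t}{2\pi(t+1/2)},\frac{t}{2\pi(t-1/2)}]$ controlling $\log(1+f'(a))$, $\delta$, $\Gamma'/\Gamma(\delta)$ and the $1/f'(a)\approx 2\pi$ term. This is then maximized over $\sigma\in(0,1]$ and $t\ge t_0$; monotonicity in $t$ should make the worst case $t=t_0$.

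\textbf{Main obstacle.} The hard part is Step~2: making sure the weighted Part~I bound gives an error that is honestly $O(a^{-\sigma})$ uniformly in $b\to\infty$, with the endpoint term at $b$ vanishing. We must also confirm the sign and conjugation conventions, since $f'$ must be positive and decreasing. The numerical optimization in Step~4 to reach $c_0=1.2552$ is then routine.
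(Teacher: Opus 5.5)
Your route is the paper's. \thmref{thm-AFE1} is exactly Part~I applied with $a=ct\in\Z+\frac12$ and $b=M\to\infty$, with the $M^{1-s}$ terms cancelling. The corollary then takes $ct=\lfloor t\rfloor+\frac12$, which is your $a$. However, the statement is about the specific constant $c_0$ of \eqref{eq:def-c0}, and your bookkeeping would not produce it.

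In Step~4 you list ``the $1/f'(a)\approx 2\pi$ term''. The only way such a term enters at size $a^{-\sigma}$ is through the boundary piece $\frac{g(a)+g(b)}{2\pi}\big(\log 2+\frac{1}{f'(a)}\big)$ of the half-integer form \eqref{def-TNab-integer+1/2}. Equivalently, it is the $\log 2+\frac1{f'(a)}$ in \corref{cor-VDC}. That piece is worth $\frac{\log 2+2\pi}{2\pi}\approx 1.11$, whereas the whole non-main-term contribution to $1.2552$ is only about $0.22$. Keeping it pushes $c_0$ to roughly $2.3$--$2.4$.

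The idea you are missing is that $\lfloor f'(a)\rfloor=0$ makes $\Sigma_{11}(x)$, i.e.\ $S_1(x,\lfloor f'(a)\rfloor)$, an empty sum: $\tilde S_1(\cdot,f'(a))=0$ in Lemma~\ref{lem:geometric}. Also $G(a,b)=0$ for half-integers. So from the general form \eqref{def-TNab} only the $S_{12}$ and $S_2$ bounds survive:
\[
T_0(a,\infty)=\frac{a^{-\sigma}}{\pi}\Big(1+\frac{\sigma}{t}\Big)\Big(\log(1+f'(a))+\gamma-\frac{\Gamma'}{\Gamma}\big(1-f'(a)\big)-\frac12-\frac{1}{2(1+f'(a))}\Big).
\]
Here $1/f'(a)$ appears only multiplied by $|g'(a)|$, which gives the harmless factor $\sigma/t$.

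The final optimization is also misdescribed. Write $c=a/t$. The total bound is then $c^{-\sigma}\big(c+K(c)\big)t^{-\sigma}$, where $c+K(c)\le m(c)$ from \eqref{def-m-AEF1}. Since $c$ depends on $t-\lfloor t\rfloor$, the bound is not monotone in $t$, and the worst case is not $t=t_0$. For $t_0=14.13472$, $t=t_0$ gives about $1.249$, while $1.2552=m(15.5/15)$ comes from $t=15$.

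The paper handles this by splitting into two cases:
\begin{itemize}
\item $t-\lfloor t\rfloor\le\frac12$: then $c\ge1$, and the bound is $m(c)$.
\item $t-\lfloor t\rfloor>\frac12$: then $c<1$, and the bound is $m(c)/c$.
\end{itemize}
It then uses interval arithmetic to show that $m$ is increasing and $m(c)/c$ is decreasing on the relevant ranges, which yields the three-term maximum \eqref{eq:def-c0}.

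If instead you use the envelope $a\in[t-\frac12,t+\frac12]$ and bound the main term and $T_0$ separately, as in your Step~3, monotonicity in $t$ does hold. But you land at roughly $1.27$--$1.28$ rather than $1.2552$: a valid inequality, but not the stated constant.

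A minor slip: in Step~2, $f'(a)<\frac{1}{2\pi}$ fails when $a<t$. Only $f'(a)<1$ is needed there.
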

The numerical thresholds for $t_0$ correspond, respectively, to a value just below the ordinate of the first non-trivial zero of $\zeta(s)$ ($\gamma_1 = 14.134725\dots$), and the height up to which the Riemann Hypothesis has been verified \cite{Platt_2021}.
\smallskip 

Applying \thmref{thm-VDC} offers an alternative to the methods of Kadiri \cite[Corollary 1.3]{Kadiri2013} and Simoni\v{c} \cite[Corollary 2]{Simonic2020}, who obtained $c_0=2.1946$ and $c_0=1.755$, respectively, when $t_0=14.13472.$ In addition, it refines the explicit constant $1+29/14 \approx 3.0714$ announced by Arias de Reyna \cite[Theorem 6]{dereyna2024approximationzetafunctiondirichlet}, which was proven using \cite[Lemma 4]{dereyna2024approximationzetafunctiondirichlet}-- an explicit version of the van der Corput approximate Poisson summation of \cite[Lemma 4.7]{Titchmarsh}.
\smallskip 

Theorem \ref{thm-AFE1} establishes the approximate functional equation under the assumption that $x \in \mathbb{Z} + 1/2$. Our above corollary removes this condition at a minor cost to the explicit constant. Proofs of \thmref{thm-AFE1} and  \corref{cor:all_t} are given in Section \ref{section-proof-AFE1}. 

\medskip
\subsection{An explicit approximate functional equation of the second kind for $\zeta(s)$:}\ \\
The next two results give explicit versions of \eqref{eq-AFE2} with logarithmic factors, and then with absolute constants on a bounded range.
The logarithm is intrinsic to the Poisson Formula route. The gain is that the resulting constants are much smaller than those obtained by contour integral methods throughout the range of computational interest.
\smallskip 

The first is a simplified numerical version of the approximate functional equation established in \thmref{thm-AFE2} (see Section \ref{section-AFE2}):
\begin{corollary}[An explicit AFE 2 with log-factors] \label{cor-AFE2}
Let $s = \sigma + it$ with $1/2 \le \sigma \le 1$ and $|t| \ge t_0 \ge 2\pi$. Assume $x, y \in \mathbb{Z} + 1/2$\footnote{
It should be possible to remove, a posteriori, the condition that $x,y\in\Z+1/2$, while retaining constants that are very close to those in Table \ref{tab:epsilon0_maxima}, by carrying out an analysis similar to that of Section \ref{sec:all_t} to obtain Corollary \ref{cor:all_t}. This would require more work than this case, and we leave it to the interested reader. Otherwise, our proof methods can be relaxed to requiring, say, $\max(x,y)\in\Z+\frac{1}{2}$ and $(\min(x,y)-\floor{\min(x,y)})\in[\varepsilon,1-\varepsilon]$, with an extra error term of size $\ll\frac{1}{\varepsilon}+\frac{1}{(1-\varepsilon)^3}.$ } satisfy $2\pi xy = |t|$ with $x, y \ge h \ge 1.5$\footnote{Since $x, y$ are positive half-integers, with $x, y \ge 1$, then taking $h \ge 1.5$ is not an additional assumption.}. We have the approximate functional equation
\begin{equation}\label{corAFE2-explicit}
\zeta(s) = \sum_{1 \le n \le x} n^{-s} + \chi(s)\sum_{1 \le m \le y} m^{s-1} + \mathcal{E}(\sigma, t, x, y),
\end{equation}
where the error term $\mathcal{E}(\sigma, t, x, y)$ satisfies
\begin{equation}
|\mathcal{E}(\sigma, t, x, y)| \le 
\begin{cases}
\left(
\frac{(1+\delta_0)\log x}{\pi} + \epsilon_0
\right)
 \, x^{-\sigma} & \text{if } x < y, \\
 \left(
\frac{ \log y}{\pi} + \epsilon_0
\right)
\, \left(\frac{t}{2\pi}\right)^{1/2 - \sigma} y^{\sigma - 1}& \text{if } x \ge y,
\end{cases}
\end{equation}
with the constants $\epsilon_0=\epsilon_0(h, t_0)$ and $\delta_0=\delta_0(t_0)$ respectively defined by 
\begin{equation}\label{def:epsilon0-delta0}
\epsilon_0 = 
\max_{\frac{1}{2}\le \s \le 1}
\mathcal{E}_0(\sigma, h, t_0),\ \text{and}\ \delta_0= \max_{\frac{1}{2}\le \s \le 1} C_0(\sigma, t_0) -1.
\end{equation}
Here, $C_0$ is defined in \eqref{def-C0} and $\mathcal{E}_0$  in \eqref{def-E0-all-x-y}. 
Table~\ref{tab:epsilon0_maxima} displays values for $\epsilon_0$ and $\delta_0$, which hold uniformly for $1/2 \le \sigma \le 1$ and vary with $t_0$, and depending with the relative size of $x$ and $y$.
\begin{table}[H]
\centering
\caption{\corref{cor-AFE2}: Values for $\epsilon_0$ and $\delta_0$ as defined in \eqref{def:epsilon0-delta0}}
\label{tab:epsilon0_maxima}
\begin{tabular}{|c|c|c|c|c|}
\hline
$t_0$ & $\epsilon_0 \ (x < y)$ & $\epsilon_0 \ (x = y)$ & $\epsilon_0 \ (x > y)$ & $\delta_0(t_0)$ \\ \hline
$2\pi$ & $2.264445$ & $ 2.265204$ & $2.265204$ & $5.961915 \times 10^{-2}$ \\ \hline
$10^3$ & $1.792711$ & $1.265977$ & $1.792736$ & $3.692900 \times 10^{-4}$ \\ \hline
$10^{10}$ & $1.750701$ & $1.160079$ & $1.750701$ & $3.692588 \times 10^{-11}$ \\ \hline
$3 \times 10^{12}$ & $1.750689$ & $1.160048$ & $1.750689$ & $1.230863 \times 10^{-13}$ \\ \hline
\end{tabular}
\end{table}
Values of $\epsilon_0$ in Table \ref{tab:epsilon0_maxima} are calculated using $h=1.5$ when $x \neq y$, and $h=\lfloor \sqrt{\frac{t_0}{2\pi}}\rfloor +0.5$ when $x=y$.
\end{corollary}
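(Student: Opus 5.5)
Corollary \ref{cor-AFE2} is a numerical specialization of \thmref{thm-AFE2}, and I would derive it from that theorem. The theorem gives an explicit bound for $\mathcal{E}(\sigma,t,x,y)$. The plan has three stages: rewrite that bound in the normalized forms $x^{-\sigma}$ (case $x<y$) and $(t/2\pi)^{1/2-\sigma}y^{\sigma-1}$ (case $x\ge y$); isolate the logarithmic term; and show the remainder is at most $\epsilon_0$.

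The underlying mechanism is what \thmref{thm-AFE2} itself does. Write $\zeta(s)=\sum_{n\le N}n^{-s}+\big(\sum_{x<n\le N}n^{-s}\big)+(\text{tail})$ with $N\asymp t$, half-integral. Handle the tail by the first-kind AFE, \thmref{thm-AFE1}. Apply Part~II of \thmref{thm-VDC} on $(x,N]$ with $g(u)=u^{-\sigma}$ and $f(u)=-\frac{t}{2\pi}\log u$ (after conjugating so that $f'$ is positive and decreasing). Here $f'(x)=\frac{t}{2\pi x}=y$ and $\delta=1/2$, since $y\in\mathbb{Z}+\frac12$. This produces $\lfloor y\rfloor$ Fourier integrals. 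Evaluating them by stationary phase at $u_\nu=t/(2\pi\nu)$ gives $\chi(s)\nu^{s-1}$ plus a second-derivative-test error. Summed over $\nu\le y$, that error yields the harmonic-type sum which produces $\frac{\log y}{\pi}$ or $\frac{(1+\delta_0)\log x}{\pi}$, depending on which of $x,y$ is smaller.

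The normalization step goes as follows. Using $2\pi xy=t$, we have $x^{-\sigma}=(t/2\pi)^{-\sigma}y^{\sigma}$ and $(t/2\pi)^{1/2-\sigma}y^{\sigma-1}=x^{-\sigma}(x/y)^{1/2-\sigma}$. When $x<y$ and $\sigma\ge 1/2$ we get $(x/y)^{1/2-\sigma}\ge1$, so the smaller term is $x^{-\sigma}$; the other case is symmetric. Every term in the bound of \thmref{thm-AFE2} is then divided by the dominant normalizer. The terms that do not carry a $\log$ are assembled into $\mathcal{E}_0(\sigma,h,t_0)$ from \eqref{def-E0-all-x-y}. The $\log$ term arising from the $\chi$-side carries the factor $|\chi(s)|(t/2\pi)^{\sigma-1/2}=C_0(\sigma,t)$ from \eqref{def-C0}. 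I would check that $C_0$ is monotone in $t$, which gives $C_0(\sigma,t)\le C_0(\sigma,t_0)\le 1+\delta_0$. Stirling's formula shows $C_0=1+O(1/t)$, consistent with the tabulated values of $\delta_0$.

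The main obstacle is proving that $\mathcal{E}_0$ is maximized at the boundary values $x,y=h$ and $t=t_0$, so that taking $\max_\sigma$ over $[1/2,1]$ is legitimate. I would show each constituent is decreasing in $x$, in $y$ and in $t$. These constituents include the $\mathcal{B}_{f'}$ terms, $\frac{|f''(a)|}{\delta^3}$-type terms with $|f''(x)|=\frac{t}{2\pi x^2}=y/x$, and $1/f'(a)=1/y$. The term $y/x$ needs care: after normalization it must be paired with the appropriate power of $x$ or $y$ so that it is bounded using $x,y\ge h$. In the case $x=y$ one has $h=\lfloor\sqrt{t_0/2\pi}\rfloor+\frac12$, which explains the smaller $\epsilon_0$ there. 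Finally, with monotonicity in hand, I would compute $\max_{\sigma\in[1/2,1]}$ numerically on a fine grid, adding a Lipschitz bound in $\sigma$ for rigor, to produce Table~\ref{tab:epsilon0_maxima}.
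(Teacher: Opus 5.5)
Your top-level reduction is correct and matches the paper's proof. \thmref{thm-AFE2} already gives $|\mathcal{E}|\le\bigl(\frac{\log y}{\pi}+\mathcal{E}_0(\sigma,h,t_0)\bigr)\bigl(\frac{t}{2\pi}\bigr)^{1/2-\sigma}y^{\sigma-1}$ for $x\ge y$, and $|\mathcal{E}|\le\bigl(\frac{C_0(\sigma,t_0)}{\pi}\log x+\mathcal{E}_0(\sigma,h,t_0)\bigr)x^{-\sigma}$ for $x<y$. The corollary then follows at once from $\mathcal{E}_0(\sigma,h,t_0)\le\epsilon_0$ and $C_0(\sigma,t_0)\le 1+\delta_0$, which hold by the definition of the maxima. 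The paper then only evaluates these maxima numerically, by Brent's method; your grid-plus-Lipschitz evaluation would be more rigorous.

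Consequently neither your normalization stage nor your ``main obstacle'' belongs to this proof. The theorem's bound is already normalized. $\mathcal{E}_0$ already depends only on $(\sigma,h,t_0)$, because the reduction to $x,y\ge h$ and $t\ge t_0$ was done inside the proof of \thmref{thm-AFE2}. Lemma~\ref{lem:bndchi-Simonic} gives $C_0(\sigma,t_0)$ uniformly for $|t|\ge t_0$, so no monotonicity of $C_0$ in $t$ has to be checked.

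The additional material you present as part of the argument contains errors that would become genuine gaps if you relied on it.

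First, the normalization identity is wrong: $\bigl(\frac{t}{2\pi}\bigr)^{1/2-\sigma}y^{\sigma-1}=x^{1/2-\sigma}y^{-1/2}=x^{-\sigma}(x/y)^{1/2}$, not $x^{-\sigma}(x/y)^{1/2-\sigma}$. Hence for $x<y$ the factor $x^{-\sigma}$ is the \emph{larger} normalizer. That is exactly what lets the $\chi$-side terms be absorbed, through $|\chi(s)|y^{\sigma-1}\le C_0(\sigma,t_0)(x/y)^{1/2}x^{-\sigma}\le C_0(\sigma,t_0)x^{-\sigma}$. With your formula the ratio of the two normalizers would be $(y/x)^{\sigma-1/2}$, which is unbounded, and the $x^{-\sigma}$ form of the bound could not be reached.

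Second, the theorem is not proved by stationary phase. The complete integrals are evaluated exactly: $J(0,\infty,m)=(2\pi m/i)^{s-1}\Gamma(1-s)=\chi(s)m^{s-1}\bigl(1+\mathcal{O}^*\bigl(\frac{e^{-\pi t}}{1-e^{-\pi t_0}}\bigr)\bigr)$. The term $\frac{\log y}{\pi}$ comes from the incomplete integrals $\sum_{m<y}J(0,x,m)$ in Lemma~\ref{lem:u-s}. There it arises from two integrations by parts, the first-derivative test, and $\sum_{m<y}\frac{m^2}{y^2(y-m)}\le\log y+\gamma+2\log 2-\frac32+\frac{1}{8y^2}$.

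Third, and most seriously, the case $x<y$ cannot be obtained by running the B-process on $(x,N]$ and ``pairing $y/x$ with powers of $x$ or $y$''. There the Part~II error bound contains the term $\frac{H(x)|f''(x)|}{4\pi^3f'(x)}\mathcal{E}_2(\frac12,y)$, of size about $\frac{115}{27\pi^2}\frac{y}{x}\,x^{-\sigma}$. Since $y/x=t/(2\pi x^2)$ is unbounded (fix $x=h$ and let $t\to\infty$), no use of $x,y\ge h$ controls it.

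The paper instead goes through the functional equation. It applies the $x\ge y$ bound to $\zeta(1-s)$ with $(\sigma,x,y)$ replaced by $(1-\sigma,y,x)$, multiplies by $\chi(s)$, and uses $\chi(s)\chi(1-s)=1$. That is the step hidden behind your ``the other case is symmetric''. It is precisely where $C_0(\sigma,t_0)$ comes to multiply $\frac{\log x}{\pi}$, hence the factor $1+\delta_0$. It is also where the swapped form of $\mathcal{E}_0$, built from $A_0(1-\sigma,h,t_0)$ and $B_0(1-\sigma,t_0)$, originates.

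None of this is needed once \thmref{thm-AFE2} is taken as given. The clean version of your proof is just the short argument of the first paragraph.
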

The following formulation of \corref{cor-AFE2} gives an explicit version of \eqref{eq-AFE2} when $\min(x,y)$ is in a bounded range. 
\begin{corollary}[An explicit AFE 2 with absolute constants] \label{cor-k-AFE2}
Let $s = \sigma + it$ with $1/2 \le \sigma \le 1$ and $|t| \ge t_0 \ge 
2\pi
$. Assume $x, y \in \mathbb{Z} + 1/2$ satisfy $2\pi xy = |t|$ with $x, y \ge h \ge 1.5$. Let $k\in\Z_+$ be such that $1\le k \le 50$ and define $h_k=\floor{e^{k-1}}+0.5$ and $H_k=e^{k}$.
If $h_k \le \min(x,y) \le H_k$, then
\eqref{corAFE2-explicit} holds with 
\begin{equation}
|\mathcal{E}(\sigma, t, x, y)| \le 
\begin{cases}
\epsilon_k \, x^{-\sigma}& \text{if }\  x<y , \\
\epsilon_k \, \left(\frac{t}{2\pi}\right)^{1/2 - \sigma} y^{\sigma - 1} & \text{if }\ y \le x .
\end{cases}
\end{equation}
The constant $\epsilon_k=\epsilon_k( h_k,t_0)$ is defined by
\begin{equation}\label{def:epsilonk}
\epsilon_k = 
\begin{cases}
\frac{k(1+\delta_0)}{\pi} +\epsilon_0 & \text{if }\  x < y , \\
\frac{k}{\pi} + \epsilon_0 & \text{if }\  y \le x ,
\end{cases}
\end{equation}
with $\epsilon_0$ and $\delta_0$ as in \eqref{def:epsilon0-delta0}.
Tables \ref{tab:Epsilon_k} and \ref{tab:Epsilon_k-large} display values for the constant $\epsilon_k$.
\begin{table}[H] 
\centering
\caption{\corref{cor-k-AFE2}: Values for $\epsilon_k$ as defined in \eqref{def:epsilonk} with $t_0 = 10^{10}$}
\label{tab:Epsilon_k}
\begin{tabular}{|c|c|c|c|c|}
\hline
$k$ & $h$ & $H = e^k$ & $\epsilon_k\ (x < y)$ & $\epsilon_k\ (x \ge y)$ \\
\hline
$1$  & $1.50$    & $2.72$     & $2.069008$ & $2.069011$ \\
$2$  & $2.50$    & $7.39$     & $2.132265$ & $2.132269$ \\
$3$  & $7.50$    & $20.09$    & $2.222296$ & $2.222299$ \\
$4$  & $20.50$   & $54.60$    & $2.472235$ & $2.472238$ \\
$5$  & $54.50$   & $148.41$   & $2.766222$ & $2.766225$ \\
$6$  & $148.50$  & $403.43$   & $3.075276$ & $3.075279$ \\
$7$  & $403.50$  & $1096.63$  & $3.390198$ & $3.390201$ \\
$8$  & $1096.50$ & $2980.96$  & $3.707261$ & $3.707264$ \\
$9$  & $2980.50$ & $8103.08$  & $4.025112$ & $4.025115$ \\ 
$10$ & $8103.50$ & $22026.47$ & $4.343253$ & $4.343256$ \\
\hline
\end{tabular}
\end{table}
\end{corollary}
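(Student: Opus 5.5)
This is a direct specialization of \corref{cor-AFE2}, and I would derive it from that statement alone. The only point needing care is replacing the logarithmic factor by a constant on the range $h_k \le \min(x,y) \le H_k$.

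\emph{Case $x<y$.} Here $\min(x,y)=x$, so the hypothesis reads $h_k \le x \le H_k=e^k$. The assumptions of \corref{cor-AFE2} hold: $x,y\in\Z+1/2$, $2\pi xy=|t|$, $x,y\ge h\ge 1.5$, $|t|\ge t_0\ge 2\pi$ and $1/2\le\sigma\le1$. It therefore gives
\[
|\mathcal{E}(\sigma,t,x,y)|\le \Big(\tfrac{(1+\delta_0)\log x}{\pi}+\epsilon_0\Big)x^{-\sigma}.
\]
Since $\log x\le \log H_k = k$ and $1+\delta_0>0$, we get $\frac{(1+\delta_0)\log x}{\pi}\le \frac{k(1+\delta_0)}{\pi}$. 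This yields the bound $\epsilon_k x^{-\sigma}$ with $\epsilon_k$ as in \eqref{def:epsilonk}.

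\emph{Case $y\le x$.} Here $\min(x,y)=y\in[h_k,H_k]$.
\begin{itemize}
\item If $y<x$, \corref{cor-AFE2} gives the factor $\frac{\log y}{\pi}+\epsilon_0 \le \frac{k}{\pi}+\epsilon_0$ multiplying $(t/2\pi)^{1/2-\sigma}y^{\sigma-1}$.
\item If $x=y$, the case ``$x\ge y$'' of \corref{cor-AFE2} applies, giving the same bound. The boundary case therefore needs no separate argument.
\end{itemize}

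\emph{Which $\epsilon_0$ to use.} The constant $\epsilon_0=\epsilon_0(h,t_0)=\max_\sigma \mathcal{E}_0(\sigma,h,t_0)$ depends on the lower bound $h$ for $x,y$. On the $k$-th range both $x,y\ge \min(x,y)\ge h_k$. Since $h_k=\lfloor e^{k-1}\rfloor+0.5\ge 1.5$ and $h_k$ is a half-integer, we may apply \corref{cor-AFE2} with $h=h_k$. This justifies writing $\epsilon_k=\epsilon_k(h_k,t_0)$ with $\epsilon_0=\epsilon_0(h_k,t_0)$.

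The one point to check is that $\mathcal{E}_0$ in \eqref{def-E0-all-x-y} is valid for any admissible $h$, i.e.\ that it is evaluated at the given lower bound. Then the tabulated constants only decrease, or stay the same, as $h$ grows. This is presumably why the entries in Table~\ref{tab:Epsilon_k} are computed with $h=h_k$.

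\emph{Numerical values.} The table entries then come from evaluating $\epsilon_0(h_k,10^{10})$ and $\delta_0(10^{10})$ and adding $k(1+\delta_0)/\pi$ or $k/\pi$. This is routine computation, and I expect no genuine obstacle beyond confirming the monotonicity and admissibility of $h$ just described.
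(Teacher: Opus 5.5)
Your proposal is correct and is essentially the paper's own argument. You apply \corref{cor-AFE2} with $h=h_k$, which is admissible since $x,y\ge \min(x,y)\ge h_k\ge 1.5$, and then use $\min(x,y)\le H_k=e^k$ to bound $\log x$ (respectively $\log y$) by $k$; this gives $\epsilon_k$ directly with $\epsilon_0=\epsilon_0(h_k,t_0)$, and the tabulated values are a numerical evaluation.
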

Note that the fluctuations in the $\epsilon_k$-values, for small values of $k$, are related to the balance between the linear growth of $k$ in these dyadic intervals (coming from the logarithmic term in \corref{cor-AFE2}, and the decreasing term $\epsilon_0$). The term $\epsilon_0(h_k)$ varies by less than $10^{-4}$ once $k\ge 10$. We also note that, for $t_0=10^{10}$ and $k\le 50$, we have $\frac{k\delta_0}{\pi}<10^{-8}.$ Therefore, for all $11\le k \le 50,$ we have, in all cases with $t_0= 10^{10}$, the bound
\begin{equation}\label{eq:ek-largeK}
    \epsilon_k \le \frac{k}{\pi} + 1.1601 .
\end{equation}
Table \ref{tab:Epsilon_k-large} gives upper bounds for $\epsilon_k$ in certain larger ranges of $k$.
\begin{table}[H] 
\centering
\caption{\corref{cor-k-AFE2}: Upper bounds for $\epsilon_k$ as defined in \eqref{def:epsilonk} with $t_0 = 10^{10}$.} 
\label{tab:Epsilon_k-large}
\begin{tabular}{|c|c|}
\hline
$k$ & $\epsilon_k\ $ \\
\hline
$11\le k \le 15$ &  $5.9346959$ \\
$16\le k \le 20$ &  $7.5262442$ \\
$21\le k \le 25$ &  $9.1177936$ \\
$26\le k \le 30$ &  $10.7093431$ \\
$31\le k \le 35$ &  $12.3008925$ \\
$36\le k \le 40$ &  $13.8924419$ \\
$41\le k \le 45$ &  $15.4839914$ \\
$46\le k \le 50$ &  $17.0755408$ \\
\hline
\end{tabular}
\end{table}
To further illustrate this, we highlight the following simple bound. Let $1/2 \le \sigma \le 1$, $|t| \ge 2\pi$, and  $x, y \in \mathbb{Z} + 1/2$ such that $2\pi xy = |t|$ with $x, y \ge 1$. Assume that $\min(x,y)\le 10^6$. Then,
\begin{equation}\label{eq:AFE2-simple}
|\mathcal{E}(\sigma, t, x, y)| \le 
\begin{cases}
6\, x^{-\sigma}& \text{if }\  x<y , \\
6 \, \left(\frac{t}{2\pi}\right)^{1/2 - \sigma} y^{\sigma - 1} & \text{if }\ y \le x .
\end{cases}
\end{equation}
\corref{cor-k-AFE2} allows for direct comparison with Simoni{\v{c}} \cite[Theorem 4, Theorem 6, Tables 2--4]{Simonic2020}. 
His argument in the non-symmetric case follows that of Hardy and Littlewood \cite[Theorem A]{HL1923}.
He also complements his work by calculating numerical bounds for the error terms in the Riemann--Siegel formula as proven by Arias de Reyna \cite[Theorems 4.1 and 4.2]{AriasDeReyna2011} in 2011.  
The error term $\mathcal{E}(\sigma,t,x,y)$ is denoted $R_1(s;x,y)$ in \cite{Simonic2020} with
\[
|R_1(s;x,y)| \le E x^{-\sigma} + F \left(\frac{|t|}{2\pi}\right)^{1/2-\sigma} y^{\s-1}. 
\]
We reproduce the values of $E+F$ in Table \ref{tab:EF_comp}. (We recall that the case $x=y$ is derived from \cite{AriasDeReyna2011}.)
\begin{table}[ht]
\centering
\caption{Values for $E+F$ from \cite[Tables 2--4]{Simonic2020} }
\label{tab:EF_comp}
\renewcommand{\arraystretch}{1.25}
\begin{tabular}{|c|c|c|c|}
\hline
$t_0$ & $x<y$ & $x=y$ & $x>y$ \\ 
\hline
$2\pi$    & $36.094$  & $4.257$   & $127.126$ \\
$10^3$    & $10.983$  & $1.195$   & $15.726$  \\
$10^{10}$ & $10.7502$ & $1.00007$ & $15.203$  \\ 
\hline
\end{tabular}
\end{table}

These values are directly comparable to our $\epsilon_k$ from Table \ref{tab:Epsilon_k} and \eqref{eq:ek-largeK}.
For instance, our estimates are finer than \cite{Simonic2020} in Table \ref{tab:EF_comp} in low ranges of $\min(x,y)$ where calculations are feasible:\\
$\epsilon_k < 15.203$ for $k\le 44$ , while $\epsilon_k < 10.7502$ for $k\le 30$.
Note that these points, $\min(x,y) \approx e^{44} \approx 10^{19.10}$ and $\min(x,y) \approx e^{30} \approx 10^{13.02}$, respectively, are the thresholds where Theorem \ref{thm-AFE2} provides finer explicit estimates. These are in the range in which $\zeta(s)$ has been computed, since our thresholds cover up to at least $t=2\pi xy \approx 10^{26}$.
In the symmetric case $x=y$, the constants of \cite{Simonic2020}, derived from
the Riemann--Siegel bounds of Arias de Reyna \cite[Theorems 4.1 and
4.2]{AriasDeReyna2011}, remain sharper than ours when $t_0\ge 10^3$. If $t_0=2\pi$, Corollary \ref{cor-AFE2} produces sharper bounds in the symmetric case, in the range $1\le x=y\le 3000.$
\smallskip 

Python code to verify the numerical constants in Table \ref{tab:epsilon0_maxima}, \ref{tab:Epsilon_k} and \ref{tab:Epsilon_k-large} from Corollary \ref{cor-AFE2} and \ref{cor-k-AFE2} can be found in the arXiv version of this paper (using floating-point arithmetic). 
The constants in the simple result stated in \eqref{eq:AFE2-simple} follow from Corollary \ref{cor-k-AFE2}, for $t_0=10^{10}$, by taking $k\le 15$ in Table \ref{tab:Epsilon_k} and \ref{tab:Epsilon_k-large}. This remains true for $t_0=2\pi$ by taking $k\le 14$, as can be verified with the code, and noting $10^6<e^{14}$.

\section{Preliminary Lemmas}\label{secn:prel-lemmas}

\subsection{Estimates of harmonic-like sums}
The following bounds are useful to establishing an explicit van der Corput $B$ estimate (Theorem \ref{thm-VDC}).
%
\begin{lemma} \label{lem:harmonic-lead}
Let $N$ be a positive integer and $0<y<N+1$, and $\Delta=N+1-y$. We have the following estimates:
   \begin{equation}
\label{bnd-sum-nu(nu-y)blower-bupper}
-  \frac1{y(N+1)} 
- \frac1y \frac{\Gamma'}{\Gamma}(\Delta) \leq \sum_{\nu>N}\frac{1}{\nu(\nu-y)} - \frac1y  \log(N+1)
\leq 
  - \frac1{2y(N+1)} - \frac1y\frac{\Gamma'}{\Gamma}(\Delta). 
\end{equation}
\begin{align}
\label{bnd-sum-nu(nu-y)^2bupper}
 \sum_{\nu>N} \frac{1}{\nu(\nu-y)^2} 
 \le& \frac{1}{y} \left( \frac{1}{\Delta^2} + \frac{1}{(\Delta+1)^2} + \frac{1}{\Delta+1} \right)  - \frac{1}{y^2} \left( \log(N+1) - \frac{1}{N+1} - \frac{\Gamma'}{\Gamma}(\Delta) \right).  \\
 \label{bnd-sum-nu(nu-y)^3bupper}
 \sum_{\nu>N} \frac{1}{\nu(\nu-y)^3} 
 \leq & \frac{1}{y} \left( \frac{1}{\Delta^3} + \frac{1}{(\Delta+1)^3} + \frac{1}{2(\Delta+1)^2} \right) 
 - \frac{1}{y^2} \left( \frac{1}{\Delta^2} + \frac{1}{\Delta+1} \right) 
 \nonumber \\
 & + \frac{1}{y^3} \left( \log(N+1) - \frac{1}{2(N+1)} - \frac{\Gamma'}{\Gamma}(\Delta) \right). \\
 \sum_{\nu\ge1} \frac{1}{\nu(\nu+y)^k} \leq & \frac{\log(y+1) + \gamma}{y^k} - \frac{q_k(y)}{2y^k(y+1)^{k-1}} \label{eq:sum_infty_power_k} ,\ \text{for}\ k=1,2,3,
\end{align}
with
$q_1(y)=(y+1)^{-1}$, $q_2(y)=1+2y$, $q_3(y)=1+3y+3y^2$.
\end{lemma}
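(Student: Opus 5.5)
The plan is to reduce every sum to digamma and trigamma values by partial fractions, and then apply standard monotone (integral-comparison) bounds for $\psi=\Gamma'/\Gamma$ and $\psi'$. The basic identity is
\[
\frac{1}{\nu(\nu-y)}=\frac1y\Big(\frac1{\nu-y}-\frac1\nu\Big).
\]
Summing over $\nu\ge N+1$ and using $\psi(z)=-\gamma+\sum_{m\ge0}\big(\frac1{m+1}-\frac1{m+z}\big)$ gives the exact evaluation
\[
\sum_{\nu>N}\frac{1}{\nu(\nu-y)}=\frac1y\big(\psi(N+1)-\psi(\Delta)\big).
\]
The first estimate \eqref{bnd-sum-nu(nu-y)blower-bupper} then follows from the classical bounds
\[
\log(N+1)-\frac1{N+1}\le\psi(N+1)\le\log(N+1)-\frac1{2(N+1)},
\]
which hold via $\psi(N+1)=H_N-\gamma$ and the Euler--Maclaurin bounds for $H_N$.

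For \eqref{bnd-sum-nu(nu-y)^2bupper} I will use the recursion
\[
\frac1{\nu(\nu-y)^{k}}=\frac1y\Big(\frac{1}{(\nu-y)^k}-\frac{1}{\nu(\nu-y)^{k-1}}\Big).
\]
With $k=2$, the first piece sums to $\frac1y\psi'(\Delta)=\frac1y\sum_{m\ge0}(\Delta+m)^{-2}$. I bound it above by keeping the terms $m=0,1$ and comparing the tail with $\int_{\Delta+1}^\infty x^{-2}\,dx$, which gives $\frac1{\Delta^2}+\frac1{(\Delta+1)^2}+\frac1{\Delta+1}$. The second piece enters with a minus sign, so it needs the lower bound from \eqref{bnd-sum-nu(nu-y)blower-bupper}; that produces exactly $-\frac1{y^2}\big(\log(N+1)-\frac1{N+1}-\psi(\Delta)\big)$.

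For \eqref{bnd-sum-nu(nu-y)^3bupper} I use $k=3$. I bound $\sum_{m\ge0}(\Delta+m)^{-3}\le \Delta^{-3}+(\Delta+1)^{-3}+\frac1{2(\Delta+1)^2}$, again by keeping two terms and comparing the tail with an integral. The subtracted piece $\frac1y\sum\frac1{\nu(\nu-y)^2}$ now needs a \emph{lower} bound. I get it from the same $k=2$ identity, using $\psi'(\Delta)\ge \frac1{\Delta^2}+\frac1{\Delta+1}$ (decreasing terms, so the tail is at least $\int_{\Delta+1}^\infty x^{-2}\,dx$) together with the \emph{upper} bound in \eqref{bnd-sum-nu(nu-y)blower-bupper}. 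This yields the displayed combination with $\log(N+1)-\frac1{2(N+1)}-\psi(\Delta)$.

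For \eqref{eq:sum_infty_power_k} I run the same scheme with $+y$. First,
\[
\sum_{\nu\ge1}\frac1{\nu(\nu+y)}=\frac{\psi(y+1)+\gamma}{y}\le\frac{\log(y+1)+\gamma-\frac1{2(y+1)}}{y},
\]
using $\psi(z)\le\log z-\frac1{2z}$; this is the case $k=1$. Then
\[
\frac1{\nu(\nu+y)^k}=\frac1y\Big(\frac1{\nu(\nu+y)^{k-1}}-\frac1{(\nu+y)^k}\Big),
\]
so inductively I need lower bounds $\sum_{\nu\ge1}(\nu+y)^{-k}\ge\int_{y+1}^\infty x^{-k}\,dx+\frac{1}{2(y+1)^k}$, a trapezoid/convexity bound for a convex decreasing function. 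Substituting and combining fractions should give an upper bound at least as good as $-\frac{q_k(y)}{2y^k(y+1)^{k-1}}$. For $k=2$ the main terms reproduce exactly $-(1+2y)/(2y^2(y+1))$ plus a further negative term, which can be dropped.

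The main obstacle is bookkeeping rather than ideas. At each recursive step the sign of the subtracted sum dictates whether its upper or lower bound is needed, and the algebra must be checked to land on the stated $q_3(y)=1+3y+3y^2$. Where the natural bound gives extra negative terms, I will simply discard them, so it suffices to verify that the combined expression is $\le$ the stated one.
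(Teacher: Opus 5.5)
Your proposal is correct and follows the paper's proof essentially step for step. Both arguments use partial fractions to reduce each sum to $\psi(N+1)-\psi(\Delta)$, $\psi(y+1)+\gamma$, and sums of $(\nu\mp y)^{-k}$, then apply the bounds $\log x-\frac1x<\psi(x)<\log x-\frac1{2x}$ and integral comparison, choosing the upper or lower bound according to the sign of each piece. The only difference is cosmetic and comes in \eqref{eq:sum_infty_power_k}: your trapezoid lower bound for $\sum_{\nu\ge1}(\nu+y)^{-k}$ and your step-by-step recursion produce extra negative terms beyond the paper's plain integral bounds. You rightly discard these, and for $k=3$ the remaining terms combine to exactly $q_3(y)=1+3y+3y^2$.
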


\begin{proof}
We establish the above bounds via partial fraction decomposition, properties of the digamma function $\frac{\Gamma'}{\Gamma}$, and integral comparison. 
Details can be found in Appendix \ref{secn:proofs-lemmas} of the ArXiv version of this paper. 
\end{proof}
The following two sections provide estimates for exponential sums (Lemmas \ref{lem:geometric} and \ref{lemma_bnd_S_plus_minus}) crucial for establishing an explicit van der Corput $B$ estimate (Theorem \ref{thm-VDC}).
\subsection{Estimates of finite exponential sums}
We first need some auxiliary bounds on a geometric exponential sum and a twisted harmonic sum, which will appear in the proof of the next lemmas. 
\smallskip

For $x,y\in\R$, and specifically for $x$ not being an integer, we consider the finite sums 
    \begin{equation}
    \label{def-Sxy}
        S_0(x,y) = \sum_{1\leq \nu\leq y} {e^{-2\pi i \nu x}} \ \text{ and }\  
        S_1(x,y) = \sum_{1\leq \nu\leq y} \frac{e^{-2\pi i \nu x}}{\nu}.
\end{equation}
%
For $y<1$, we have the empty sums $S_0(x,y)=S_1(x,y)=0.$ Furthermore, we note that Titchmarsh (\cite{Titchmarsh}) and Arias de Reyna (\cite{dereyna2024approximationzetafunctiondirichlet}) use the bound $S_1(x,y) \leq  \log (1+|y|)$. 

\smallskip
This is essentially tight when $x\in\Z$. For non-integer $x$, we will prove some stronger bounds that take advantage of cancellation in the oscillations, which are our main source of improvement from previous results. A generic bound for $Z$ follows from one for $S$ by partial summation, and we can obtain a tighter bound when $x$ is a half-integer. 

\begin{figure}[ht]
	\centering
		\centering
		\includegraphics[width=3in]{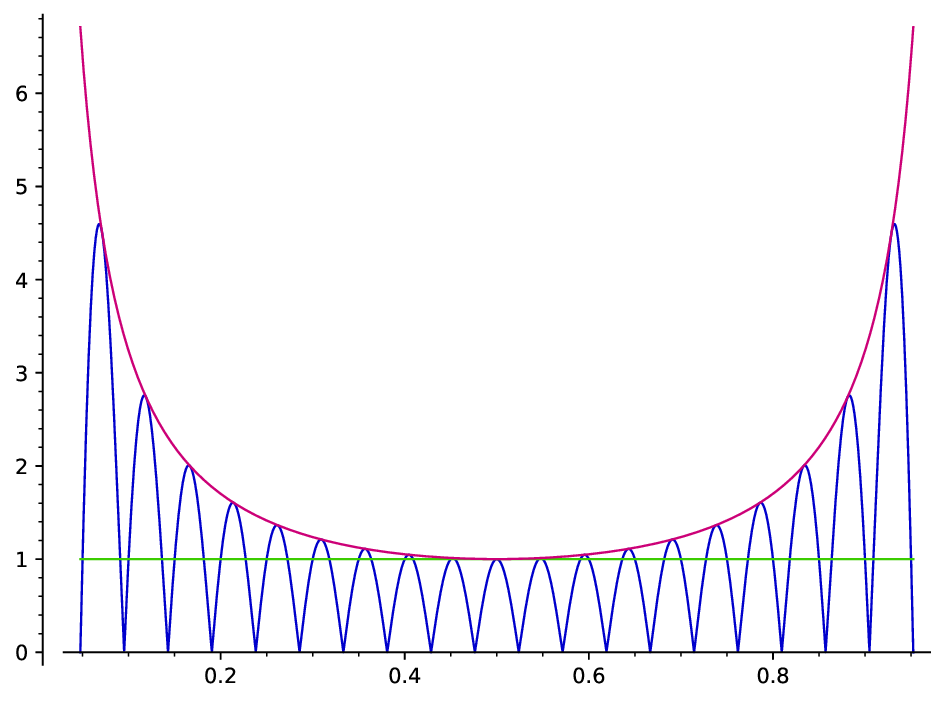}
		\caption{\label{fig:sxy}}Plot of $|S_0(x,y)|$ as a function of $x$, in the case $\lfloor y \rfloor = 21,$ in blue. In purple, the bound $1/|\sin(\pi x)|$.
		  
\end{figure}
\begin{lemma}\label{lem:geometric}
    For $x,y\in \R$ with $x\notin \Z$ and $y>0$, we have the bounds:
    \begin{align}
        |S_0(x,y)| & = \bigg| \frac{\sin(\pi x \lfloor y \rfloor )}{\sin \pi x} \bigg| \leq \begin{cases}
        \frac{1}{|\sin \pi x|} & \text{ if } y \geq 1,\\
        0 & \text{ if } 0<y<1,\\\end{cases}
        \label{bnd-S_0(y)}\\
        |S_1(x,y)|&\leq \tilde{S_1}(x,y)
       = \begin{cases}
        \frac{1}{|\sin(\pi x)|}\left(
        \frac{1}{y} +1 
        \right) & \text{ if } y \geq 1,\\
        0 & \text{ if } 0<y<1.\\\end{cases}
         \label{bnd-Zxy}
    \end{align}
    Moreover, in the special case where $x=n+1/2\  (n\in\Z)$ and $y\ge1$, we can replace the definition of $\tilde{S_1}$ with the slightly stronger bound:
   \begin{equation}\label{bnd-ZxySpecial}
        \tilde{S_1}(n+1/2,y)=\log 2 + \mathcal{O}^*\left(
        \frac{1}{y}
        \right).
   \end{equation}
\end{lemma}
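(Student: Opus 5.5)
The plan has three parts. First evaluate the geometric sum $S_0$ in closed form. Then bound $S_1$ by Abel summation against the partial sums of $S_0$. Finally, for $x=n+\tfrac12$, use the fact that $S_1$ becomes an alternating harmonic sum.

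\textbf{The geometric sum $S_0$.} Put $M=\lfloor y\rfloor$ and $w=e^{-2\pi i x}\neq 1$, since $x\notin\Z$. For $M\ge1$,
\[
S_0(x,y)=\sum_{\nu=1}^M w^\nu = w\,\frac{w^M-1}{w-1}.
\]
Writing $w^M-1=e^{-\pi i Mx}(-2i\sin \pi M x)$ and $w-1=e^{-\pi i x}(-2i\sin\pi x)$, this becomes
\[
|S_0(x,y)|=\Big|\frac{\sin(\pi M x)}{\sin \pi x}\Big|\le \frac{1}{|\sin\pi x|}.
\]
For $0<y<1$ the sum is empty, so it vanishes.

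\textbf{The general bound for $S_1$.} Let $A(u)=S_0(x,u)$. Abel summation gives
\[
S_1(x,y)=\sum_{\nu\le M} \frac{A(\nu)-A(\nu-1)}{\nu}=\frac{A(M)}{M}+\sum_{\nu=1}^{M-1}A(\nu)\Big(\frac1\nu-\frac1{\nu+1}\Big).
\]
Inserting $|A(\nu)|\le 1/|\sin\pi x|$ and telescoping bounds the second sum by $\frac{1}{|\sin \pi x|}\big(1-\frac1M\big)$. Hence
\[
|S_1(x,y)|\le \frac{1}{|\sin\pi x|}\Big(\frac1M+1-\frac1M\Big)=\frac{1}{|\sin\pi x|}.
\]
This is already at most the stated $\frac{1}{|\sin\pi x|}\big(\frac1y+1\big)$. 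If a bound in terms of $y$ rather than $M$ is preferred, it is enough to note that $\frac1M\le \frac{2}{y}$ is not needed, since the telescoped bound carries no $1/M$ term at all. So the stated estimate follows with room to spare, and the case $y<1$ is again trivial.

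\textbf{The half-integer case.} For $x=n+\tfrac12$ we have $e^{-2\pi i\nu x}=(-1)^\nu$, so
\[
S_1=\sum_{\nu\le M}\frac{(-1)^\nu}{\nu}=-\log 2+R_M,
\]
where $R_M=\sum_{\nu>M}\frac{(-1)^{\nu+1}}{\nu}$ is the tail of the alternating harmonic series. Standard alternating-series estimates give
\[
|R_M|\le \frac{1}{M+1}\le\frac{1}{y}, \qquad\text{since } M+1>y.
\]
Therefore $|S_1|\le \log2+\frac1y$, which is the claimed form $\log 2+\mathcal O^*(1/y)$.

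\textbf{Where care is needed.} The only step requiring attention is bookkeeping between $M=\lfloor y\rfloor$ and $y$ in the error terms. The inequalities $M\le y<M+1$ handle this directly. No step is a genuine obstacle.
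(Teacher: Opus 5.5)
Your proof is correct and follows the paper's route. You use the closed-form geometric sum for $S_0$, partial summation of $S_1$ against the partial sums $S_0(x,u)$ (done discretely, where the paper uses a Riemann--Stieltjes integral), and the alternating-series tail when $x=n+\tfrac12$. Because you sum the telescoping weights exactly to $1-1/\lfloor y\rfloor$, instead of bounding $\int_1^y u^{-2}\,du$ by $1$ as the paper does, you in fact obtain the slightly sharper $|S_1(x,y)|\le 1/|\sin \pi x|$, which implies the stated bound.
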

\begin{proof}
We establish the above bounds via geometric summation for $S_0$ and integrating by parts with the resulting bound for $S_1$, via a Riemann-Stieltjes integral.
Details can be found in Appendix \ref{secn:proofs-lemmas} of the ArXiv version of this paper. 
\end{proof}
\begin{remark}
The finer behavior of $S_0(x,y)$ (and $S_1(x,y)$) depend strongly on the shape of $x$, particularly on how close it is to fractions of denominator $\lfloor y \rfloor$, as well as the parity of $\lfloor y \rfloor$. See Figure \ref{fig:sxy} for a plot of $|S_0(x,y)|$ as a function of $x$, when $\lfloor y \rfloor = 21.$ 
It might be possible to decrease the factor $(1/y+1)$ in the generic bound \eqref{bnd-Zxy} to $(1/y+\log 2)$, but we were not able to show this, other than the special case $x=n+1/2$, $n\in\Z$ in \eqref{bnd-ZxySpecial}. Such a general improvement would possibly require a finer Diophantine analysis on $x$ and $y$. See Figure \ref{fig:zxy} for a plot of $|S_1(x,y)|$, compared to this conjectural bound. This, along with the equality \eqref{bnd-ZxySpecial}, suggests that the generic bound given in \eqref{bnd-Zxy} is strong.
\begin{figure}[ht]
\centering
	\includegraphics[width=3in]{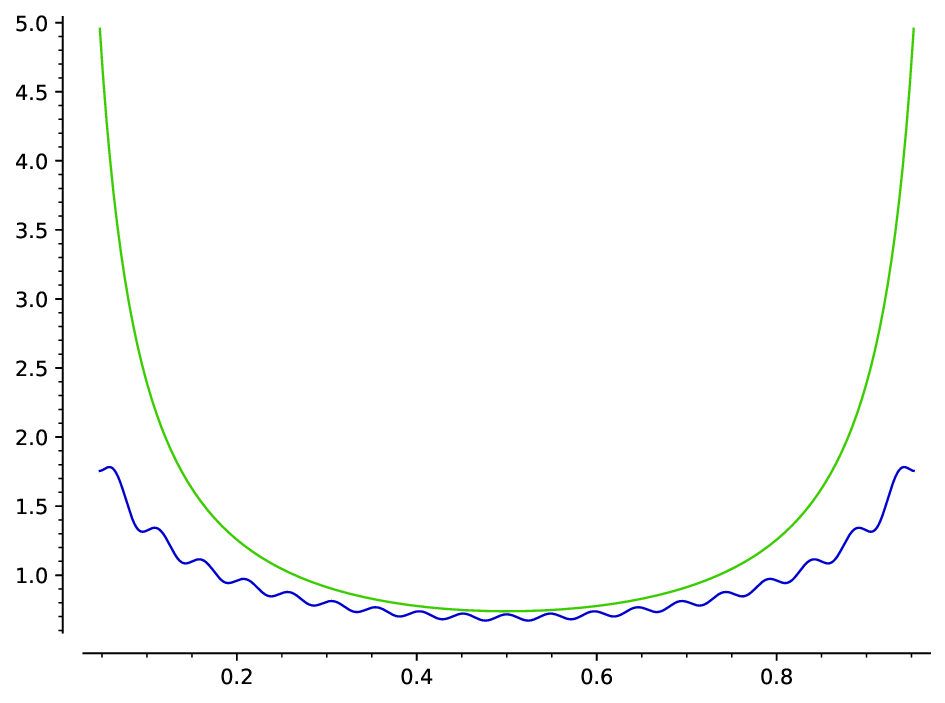}
	\caption{\label{fig:zxy}}Plot of $|S_1(x,y)|$ as a function of $x$, in the case $\lfloor y \rfloor = 21,$ in blue. In green, the conjectural bound $(1/y + \log 2)/|\sin(\pi x)|$, which we proved in the special case $x=n+1/2$, $n\in\Z.$
\end{figure}
\end{remark}
\subsection{Estimates of tails of exponential sums}
Let $N$ be a non-negative integer, and $x,y\in\R$ satisfying $x$ is not an integer and $y< N+1$.
\begin{lemma} \label{lemma_bnd_S_plus_minus}
Let $x,y\in\R$ with $1\leq y< N+1$, $x\notin\Z$,  and let $N$ be a non-negative integer. Then
    \begin{align}
    & \label{bnd-Sminus}
    \bigg|  \sum_{ \nu > N} \frac{e^{-2\pi i \nu x}}{\nu(\nu-y) }\bigg|  \leq Z_0(N,x,y) =  
   \frac{1}{y|\sin(\pi x)|} \left( \frac{1}{N+1-y} + \frac{1}{N+1} \right)
        ;\\
     & \label{bnd-Splus}
     \bigg| \sum_{ \nu=1}^{\infty} \frac{e^{-2\pi i \nu x}}{\nu(\nu+y) }\bigg|  \leq Z_1(x,y) =     
     \frac{1}{y|\sin(\pi x)|} \left( 1+\frac{1}{1+y} \right).
    \end{align}
    In the special case where $x\in\Z+\frac{1}{2}$ and $N+1-y \geq \frac{1}{2}$, the definitions can be replaced by the stronger estimates
    \begin{equation}
        \begin{split}
            &Z_0(N,x,y) =  \frac{1}{y}  \bigg| \frac{\Gamma'}{\Gamma}(N+1-y) - \frac{\Gamma'}{\Gamma}\left(\frac{N+2-y}{2}\right) - \log 2  \bigg|   +\frac{1}{y(N+1)} \leq \frac{1}{y}\left(\frac{\pi}{2} + \frac{1}{N+1} \right); 
\\
        &Z_1(x,y) =     
        \frac{\log 2}{y} + \frac{3}{2y(y+1)};\label{def:z01-halfInteger}
    \\
        \end{split}
    \end{equation}
\end{lemma}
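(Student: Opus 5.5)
The plan is to split the weight by partial fractions, which turns each series into a difference of two series with positive, decreasing coefficients; Abel summation against the geometric sum $S_0$ of Lemma~\ref{lem:geometric} then gives the generic bounds, and alternating-series and digamma identities give the half-integer case. Concretely, for $\nu>N\ge y-1$ we have $\nu-y>0$ and
\[
\frac{1}{\nu(\nu-y)}=\frac1y\Big(\frac{1}{\nu-y}-\frac1\nu\Big),\qquad \frac{1}{\nu(\nu+y)}=\frac1y\Big(\frac1\nu-\frac{1}{\nu+y}\Big).
\]
Each of the sequences $a_\nu=1/(\nu-y)$ (for $\nu\ge N+1$), $1/\nu$ and $1/(\nu+y)$ is positive, decreasing and tends to $0$. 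So it suffices to bound series of the form $\sum_{\nu\ge M} a_\nu e^{-2\pi i\nu x}$ with $a_\nu\downarrow 0$.

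\textbf{Generic case.} By the geometric-sum computation behind \eqref{bnd-S_0(y)}, every block sum satisfies $|\sum_{M\le\nu\le K}e^{-2\pi i\nu x}|\le 1/|\sin\pi x|$. Abel summation then gives
\[
\Big|\sum_{\nu\ge M}a_\nu e^{-2\pi i\nu x}\Big|\le \frac{1}{|\sin \pi x|}\Big(\sum_{\nu\ge M}(a_\nu-a_{\nu+1})\Big)=\frac{a_M}{|\sin\pi x|}.
\]
I will apply this to each of the two pieces and use the triangle inequality. With $M=N+1$ the pieces contribute $\frac{1}{y|\sin\pi x|}\cdot\frac{1}{N+1-y}$ and $\frac{1}{y|\sin\pi x|}\cdot\frac{1}{N+1}$, which is exactly $Z_0$ in \eqref{bnd-Sminus}. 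With $M=1$ the two pieces of the second series contribute $\frac{1}{y|\sin\pi x|}$ and $\frac{1}{y|\sin\pi x|}\cdot\frac1{1+y}$, which is exactly $Z_1$ in \eqref{bnd-Splus}. Convergence of these series is automatic, since $a_\nu\to0$ and the partial sums of $e^{-2\pi i\nu x}$ are bounded.

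\textbf{Half-integer case.} Now $e^{-2\pi i\nu x}=(-1)^\nu$, and I will evaluate the series rather than just bound them. Put $\Delta=N+1-y\ge\tfrac12$. The first piece is
\[
\sum_{\nu>N}\frac{(-1)^\nu}{\nu-y}=(-1)^{N+1}\sum_{k\ge0}\frac{(-1)^k}{k+\Delta}=\frac{(-1)^{N+1}}{2}\Big(\frac{\Gamma'}{\Gamma}\big(\tfrac{\Delta+1}{2}\big)-\frac{\Gamma'}{\Gamma}\big(\tfrac{\Delta}{2}\big)\Big).
\]
The duplication formula $\frac{\Gamma'}{\Gamma}(\Delta)=\tfrac12\frac{\Gamma'}{\Gamma}(\tfrac\Delta2)+\tfrac12\frac{\Gamma'}{\Gamma}(\tfrac{\Delta+1}2)+\log 2$ rewrites its modulus as $\big|\frac{\Gamma'}{\Gamma}(\Delta)-\frac{\Gamma'}{\Gamma}(\frac{\Delta+1}{2})-\log2\big|$, which is the expression in \eqref{def:z01-halfInteger} since $\frac{N+2-y}{2}=\frac{\Delta+1}{2}$. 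The piece $\sum_{\nu>N}(-1)^\nu/\nu$ is an alternating tail, so it has modulus at most $1/(N+1)$. Together these give the stated $Z_0$.

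For the upper bound $\frac1y\big(\frac{\pi}{2}+\frac{1}{N+1}\big)$, I will write $\sum_{k\ge0}(-1)^k/(k+\Delta)=\int_0^1 u^{\Delta-1}(1+u)^{-1}\,du$. This is positive and decreasing in $\Delta$, so its maximum over $\Delta\ge\frac12$ is attained at $\Delta=\frac12$, where it equals $2\sum_k(-1)^k/(2k+1)=\pi/2$.

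For $Z_1$ I will use $\sum_{\nu\ge1}(-1)^\nu/\nu=-\log2$. The other piece $\sum_{\nu\ge1}(-1)^\nu/(\nu+y)$ is alternating with decreasing terms, so its modulus is at most $1/(1+y)$. This yields $\frac{\log2}{y}+\frac{1}{y(y+1)}$, which is below the stated $\frac{\log 2}{y}+\frac{3}{2y(y+1)}$.

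I do not expect a serious obstacle. The one step needing care is the digamma manipulation together with the monotonicity in $\Delta$ that produces the constant $\pi/2$; the rest is routine Abel summation.
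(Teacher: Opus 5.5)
Your proof is correct. For the generic bounds and for the digamma form of $Z_0$ it is essentially the paper's argument. The paper also splits by partial fractions and bounds each tail by integrating by parts against the bounded geometric sum, which is a Riemann--Stieltjes version of your Abel summation. In the half-integer case it likewise uses the parity split, the duplication formula, and the alternating tail bound $1/(N+1)$.

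You differ from the paper in two places.

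\textbf{Half-integer $Z_1$.} The paper evaluates $\sum_{\nu\ge1}(-1)^\nu/(\nu+y)$ through digamma values and then applies $\log z-\frac1z<\frac{\Gamma'}{\Gamma}(z)<\log z-\frac1{2z}$; this is the source of its term $\frac{3}{2y(y+1)}$. Your alternating-series estimate is shorter and gives the sharper $\frac{1}{y(y+1)}$. It also avoids the paper's displayed closed form $\frac1y\big(\frac{\Gamma'}{\Gamma}(y+1)-\frac{\Gamma'}{\Gamma}(\frac{y+1}{2})\big)$, which has dropped a term $-\frac{2\log 2}{y}$. At $y=1$ the true value is $1-2\log 2$, not $1$. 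The paper's final bound nevertheless remains valid.

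\textbf{The inequality $Z_0\le\frac1y\big(\frac{\pi}{2}+\frac1{N+1}\big)$.} The paper's appendix stops at the digamma expression and does not prove this inequality. Your representation
\[
\sum_{k\ge0}\frac{(-1)^k}{k+\Delta}=\int_0^1\frac{u^{\Delta-1}}{1+u}\,du
\]
is positive and decreasing in $\Delta$, and equals $\pi/2$ at $\Delta=\frac12$. This supplies the missing step and also shows why the hypothesis $N+1-y\ge\frac12$ is needed.
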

\begin{proof}
We compare our sums with a Riemann--Stieltjes integral involving $S_0$ and
$S_1$, integrate by parts, and use the bounds of Lemma~\ref{lem:geometric}. Details can be found in Appendix \ref{secn:proofs-lemmas} of the ArXiv version of this paper. 
\end{proof}
\subsection{A first stationary phase estimate}
We first recall a bound on oscillatory integrals. Arias de Reyna gives an explicit version of \cite[Lemma 4.3]{Titchmarsh} with an optimal constant of $2$ (improving upon the constant $4$ in the original formulation):
\begin{lemma} \cite[Lemma 2]{dereyna2024approximationzetafunctiondirichlet}\label{Lemma 2 arias}
    Let $f(x)$ and $g(x)$ be continuous real functions defined in a closed interval $[a,b]$. Assume that $f$ has a non-null continuous derivative $f'(x)$ and that $\Big| \frac{g(x)}{f'(x)}\Big| $ is non-zero and monotonic. Then 
    \begin{equation}
        \bigg|\int_a^bg(x)e^{if(x)}dx\bigg|\le2\text{ max}_{a\leq x\leq b} \bigg| \frac{g(x)}{f'(x)}\bigg| 
    \end{equation}
\end{lemma}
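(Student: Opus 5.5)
The plan is to reduce to a real integral by rotating the phase, and then apply the second mean value theorem (Bonnet's form) to the monotonic factor $|g/f'|$.

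First, the sign setup. Since $f'$ is continuous and never zero on $[a,b]$, it has constant sign. Likewise $g/f'$ is continuous and nonzero, so it has constant sign. Write $g/f' = \epsilon\, h$ with $\epsilon\in\{\pm1\}$ and $h = |g/f'|$. By hypothesis $h$ is positive and monotonic.

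Next, the rotation. Put $I=\int_a^b g(x)e^{if(x)}\,dx$. If $I=0$ there is nothing to prove. Otherwise choose $\theta\in\R$ with $e^{-i\theta}I=|I|$. Taking real parts, and using that $f$ is $C^1$ so that $\frac{d}{dx}\sin(f(x)-\theta)=f'(x)\cos(f(x)-\theta)$, we get
\begin{equation}
|I| = \Re\big(e^{-i\theta}I\big) = \int_a^b g(x)\cos(f(x)-\theta)\,dx = \epsilon\int_a^b h(x)\,\frac{d}{dx}\sin\big(f(x)-\theta\big)\,dx .
\end{equation}
This rotation step is the key point. Bounding the real and imaginary parts of $I$ separately would only give the constant $2\sqrt2$, or $4$ as in Titchmarsh; rotating first is what yields the constant $2$.

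Then apply Bonnet's second mean value theorem with $\varphi(x)=\frac{d}{dx}\sin(f(x)-\theta)$, which is continuous, and $h\ge 0$ monotonic.
\begin{itemize}
\item If $h$ is nonincreasing, there is $\xi\in[a,b]$ with $\int_a^b h\varphi = h(a)\int_a^\xi\varphi = h(a)\big(\sin(f(\xi)-\theta)-\sin(f(a)-\theta)\big)$.
\item If $h$ is nondecreasing, then analogously $\int_a^b h\varphi = h(b)\int_\xi^b\varphi$.
\end{itemize}
In either case the difference of two sines has absolute value at most $2$. Hence $|I|\le 2\max_{[a,b]} h = 2\max_{a\le x\le b}|g(x)/f'(x)|$.

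The main point of care is to use the one-sided (nonnegative) form of Bonnet's theorem rather than the general two-endpoint form. The two-endpoint form would give $2(|h(a)|+|h(b)|)$, which is too large. This is exactly why we factor out the constant sign $\epsilon$ before applying it. If one prefers to avoid Bonnet, the same bound follows from the Riemann--Stieltjes integration by parts $\int h\,d\Phi = [h\Phi]_a^b-\int\Phi\,dh$, with $\Phi(x)=\sin(f(x)-\theta)-\sin(f(a)-\theta)$ (or the analogous primitive anchored at $b$ in the increasing case), since $h$ is monotonic and therefore of bounded variation.
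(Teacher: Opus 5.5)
Your proof is correct. The sign bookkeeping (continuity and non-vanishing of $f'$ and $g/f'$ give $g/f'=\epsilon h$ with $h\ge 0$ monotone), the rotation $e^{-i\theta}I=|I|$, and the one-sided Bonnet theorem applied to $h$ and $\varphi=\frac{d}{dx}\sin(f-\theta)$ together yield $|I|\le 2h(a)$ or $|I|\le 2h(b)$, i.e.\ $|I|\le 2\max|g/f'|$.

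The paper gives no proof of this lemma; it simply cites Arias de Reyna. Your rotate-then-second-mean-value argument is the standard way to obtain the constant $2$ instead of Titchmarsh's $4$, so it works as a self-contained proof of the cited statement.

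One side remark in your proposal is overstated. The two-endpoint form $h(a)\int_a^\xi\varphi+h(b)\int_\xi^b\varphi$ also gives $2\max h$ if you regroup it as $h(b)S(b)-h(a)S(a)+(h(a)-h(b))S(\xi)$ with $S=\sin(f-\theta)$. This works precisely because $h$ has constant sign; only the crude term-by-term bound loses the factor $2$. This does not affect your argument.
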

For $m$ a positive integer, $s=\sigma + i t $ with $\sigma>0$, and $0\le a<b$ (with $b$ symbolically possibly being $\infty$), we introduce the notation 
\begin{equation}\label{def-intJabm}
J(a,b,m) = \int_a^b u^{-s} e^{2\pi i m u} \,du.    
\end{equation}
We apply the above lemma, and integration by parts, to 
estimate the following oscillatory sums and integrals. 
\begin{lemma}\label{lem:u-s}
    Let $x\ge 1$ with $x\in\Z+\frac{1}{2}$,  $s=\sigma+it$ with $0<\sigma<1$, and define $y$ by the equation $2\pi x y = |t|$. Assume additionally that $y\ge  1$ and $y\in \Z+\frac{1}{2}$. Let  $m\ge 1$ be an integer. 
    Then, we have the following estimates:
\begin{align}  
& \bigg|\sum_{1 \leq m < y }J(0,x,m)\bigg| \le x^{-\sigma} \Bigg( 
\frac{\log y }{ \pi } + \frac{1}{\pi}\left(\gamma+2\log 2-\frac{3}{2}\right)  +\frac{3}{4\pi y} + \frac{3}{8\pi y^2}
\Bigg),\label{est I2}
\\ &
\bigg|  \sum_{1 \leq m \leq y } J(N,\infty,m) \bigg|  
\leq 2 \frac{N^{-\sigma}}{\pi} \big(\log y + 1  \big)\ \text{ for all }\ N>\frac{t}{\pi m}.
 \label{est I3}
\end{align}
\end{lemma}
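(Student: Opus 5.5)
The plan is to treat each integral $J(\cdot,\cdot,m)$ as an oscillatory integral with amplitude $u^{-\sigma}$ and phase $F_m(u)=2\pi m u - t\log u$. By symmetry (complex conjugation) we may take $t>0$. We bound each $J$ with Lemma~\ref{Lemma 2 arias} and then sum over $m$ using harmonic-type sums. Here $F_m'(u)=2\pi m-t/u$, so the quantity to control in Lemma~\ref{Lemma 2 arias} is
\[
\frac{g}{F_m'}=\frac{u^{-\sigma}}{2\pi m-t/u}=\frac{u^{1-\sigma}}{2\pi m u-t}.
\]

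For \eqref{est I2}, take $u\in(0,x]$ and $1\le m<y$. Since $2\pi m u\le 2\pi m x<2\pi xy=t$, the derivative $F_m'$ never vanishes, and $|g/F_m'|=u^{1-\sigma}/(t-2\pi m u)$ is increasing in $u$: the numerator increases and the denominator decreases. Its maximum, attained at $u=x$, equals $x^{-\sigma}/(2\pi(y-m))$. We apply Lemma~\ref{Lemma 2 arias} on $[\varepsilon,x]$ and let $\varepsilon\to0$; this is legitimate because $u^{-\sigma}$ is integrable at $0$ and the bound is uniform in $\varepsilon$. This gives $|J(0,x,m)|\le x^{-\sigma}/(\pi(y-m))$.

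Summing, since $y\in\Z+\frac12$, the values $y-m$ run over $\frac12,\frac32,\dots,y-1$, so
\[
\sum_{1\le m<y}\frac{1}{y-m}=\frac{\Gamma'}{\Gamma}(y)-\frac{\Gamma'}{\Gamma}\Big(\tfrac12\Big)=\frac{\Gamma'}{\Gamma}(y)+\gamma+2\log 2 .
\]
An explicit upper bound for the digamma function of the form $\log y-\frac{1}{2y}+\dots$ then yields $\log y+\gamma+2\log2$ plus lower-order terms. The stated constant $-\tfrac32$ and the terms $\tfrac{3}{4\pi y}$, $\tfrac{3}{8\pi y^2}$ are better than what this crude summation gives. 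I therefore expect the main obstacle to be recovering these sharper secondary terms.

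To obtain them, I would first try integrating by parts exactly instead of using the factor $2$ of Lemma~\ref{Lemma 2 arias}. The boundary term at $0$ vanishes, and the boundary term at $x$ is $x^{-\sigma}e^{iF_m(x)}/(2\pi i(m-y))$. Because $x\in\Z+\tfrac12$, we have $e^{2\pi i m x}=(-1)^m$, so these boundary terms form an alternating sum over $m$. Such a sum is bounded by $\log 2$-type constants, in the spirit of Lemma~\ref{lem:geometric}, rather than by a harmonic sum. Only the variation integral then contributes the factor $\frac{1}{2\pi}\sum 1/(y-m)$. If this does not reproduce the exact constants, I would fall back on treating the term $m=y-\tfrac12$ (where $y-m=\tfrac12$) separately, using a second-derivative estimate, and apply the digamma expansion $\frac{\Gamma'}{\Gamma}(y)\le\log y-\frac{1}{2y}$ with explicit remainder to the rest.

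For \eqref{est I3}, take $u\ge N>t/(\pi m)$. Then $t/u<\pi m$, so $F_m'(u)\ge \pi m>0$. Differentiating $u^{1-\sigma}/(2\pi m u-t)$ gives a numerator $-\sigma 2\pi m u-(1-\sigma)t<0$, so $|g/F_m'|$ is monotone decreasing. Its maximum, at $u=N$, is $N^{1-\sigma}/(2\pi m N-t)\le N^{-\sigma}/(\pi m)$. Lemma~\ref{Lemma 2 arias} on $[N,M]$, followed by letting $M\to\infty$, gives $|J(N,\infty,m)|\le 2N^{-\sigma}/(\pi m)$. Summing over $1\le m\le y$ with $\sum_{m\le y}1/m\le \log y+1$ yields \eqref{est I3}. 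This part is routine.
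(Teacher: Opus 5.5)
Your argument for \eqref{est I3} is correct and is the paper's. The gap is in \eqref{est I2}.

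As you note, applying Lemma~\ref{Lemma 2 arias} to each $J(0,x,m)$ gives only $\frac{x^{-\sigma}}{\pi}\big(\frac{\Gamma'}{\Gamma}(y)+\gamma+2\log 2\big)$. Neither of your fixes recovers the $-\frac32$.

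Keeping the boundary term of one integration by parts gives
\[
\Big|\sum_{1\le m<y}J(0,x,m)\Big|\le \frac{x^{-\sigma}}{2\pi}\Big(2\Big|\sum_{0\le k<y-\frac12}\frac{(-1)^k}{2k+1}\Big|+\frac{\Gamma'}{\Gamma}(y)-\frac{\Gamma'}{\Gamma}\big(\tfrac12\big)\Big).
\]
The boundary contribution is a partial Leibniz sum: after the factor $2$ it is at most $2$ and tends to $\pi/2$, so it is not a $\log 2$-type constant. Its leading term $\frac{\log y}{2\pi}$ beats \eqref{est I2} for large $y$. For small $y$, however, it is worse. At $y=\frac32$ and $y=\frac52$ it equals $\frac{2}{\pi}x^{-\sigma}$, while the right side of \eqref{est I2} is about $1.54\,x^{-\sigma}/\pi$ and $1.74\,x^{-\sigma}/\pi$. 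It stays above the right side for every half-integer $y\le\frac{23}{2}$. So it does not prove the lemma for all $y\ge\frac32$.

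Your fallback for $m=y-\frac12$ cannot help either. On $(0,x]$ you have $|F_m'|\ge\pi$, since the stationary point $xy/m$ lies beyond $x$. The obstruction is a constant, not a stationary point. A second-derivative bound with $F_m''\ge 2\pi y/x$ is of order $x^{-\sigma}\sqrt{x/y}$, which is not $O(x^{-\sigma})$.

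The missing idea is to integrate by parts twice in the opposite direction: integrate $u^{-s}$ and differentiate $e^{2\pi i m u}$. Since $e^{2\pi i m x}=(-1)^m$,
\[
J(0,x,m)=\frac{(-1)^m x^{1-s}}{1-s}-\frac{2\pi i m(-1)^m x^{2-s}}{(1-s)(2-s)}-\frac{4\pi^2m^2}{(1-s)(2-s)}\int_0^x u^{2-s}e^{2\pi i m u}\,du .
\]
The boundary terms have $m$-independent prefactors. So $|\sum(-1)^m|\le1$ and $|\sum(-1)^m m|\le\frac{y+1}{2}$ bound them by $\frac{x^{-\sigma}}{2\pi y}+\frac{(y+1)x^{-\sigma}}{4\pi y^2}$.

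For the last integral, apply Lemma~\ref{Lemma 2 arias}; the ratio $u^{3-\sigma}/(|t|-2\pi m u)$ is increasing. Together with $|1-s||2-s|\ge t^2=(2\pi xy)^2$, this gives $\frac{m^2x^{-\sigma}}{\pi y^2(y-m)}$. The factor $m^2/y^2$ comes from $|1-s|\ge|t|$ and damps the small-$m$ terms.

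Write $\frac{m^2}{y^2(y-m)}=\frac1{y-m}-\frac1y-\frac{m}{y^2}$ and use $\lfloor y\rfloor=y-\frac12$. This subtracts $\frac32-\frac1{2y}-\frac1{8y^2}$ from $\frac{\Gamma'}{\Gamma}(y)-\frac{\Gamma'}{\Gamma}(\frac12)<\log y-\frac1{2y}+\gamma+2\log2$. Adding the boundary contributions yields exactly the $-\frac32$, $\frac{3}{4\pi y}$ and $\frac{3}{8\pi y^2}$ of \eqref{est I2}.

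Your one-step bound is a useful observation for large $y$, where its $\frac{\log y}{2\pi}$ beats the paper's $\frac{\log y}{\pi}$. But covering small $y$ requires this damping step.
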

\begin{remark}
Note that the $(\log y)$-term arising from the $\sum_{1\le m<y} J(0,x,m)$ is the one appearing in the error term $\mathcal{E}$ in our main \thmref{thm-AFE2}, and \corref{cor-AFE2}.
\end{remark}
\begin{proof}
Integrating by parts twice and using that $x \in \Z+\frac{1}{2}$, so that $e^{2\pi i mx}=(-1)^m$,  we find that
\begin{equation}\label{ineq-J0xm}
        J(0,x,m) 
        = \frac{x^{1-s} (-1)^m}{1-s} - \frac{2\pi i m \, x^{2-s} (-1)^m}{(1-s)(2-s)} - \frac{4\pi^2 m^2}{(1-s)(2-s)} \int_0^x u^{2-s} e^{2\pi i m u} \,\mathrm{d}u.
\end{equation}
To bound the resulting integral, set $F(u)= 2\pi m u - t\log u$ and $G(u)=u^{2-\sigma}$. The ratio
\[
\left|\frac{G(u)}{F'(u)}\right| = \frac{u^{3-\sigma}}{|t|-2\pi m u}
\]
is non-negative and strictly increasing on $(0, x]$, so applying Lemma \ref{Lemma 2 arias} leads to:
\[
\left| \int_0^x u^{2-s} e^{2\pi i m u} \,\mathrm{d}u \right| 
\le 2 \frac{x^{3-\sigma}}{|t|-2\pi m x} 
= \frac{x^{2-\sigma}}{\pi(y - m)}.
\]
Using $|1-s||2-s| \ge |t|^2 = (2\pi x y)^2$, the integral expression is simplified as follows:
\[
\frac{4\pi^2 m^2}{|1-s||2-s|} \cdot \frac{x^{2-\sigma}}{\pi(y-m)} \le \frac{4\pi^2 m^2 x^{2-\sigma}}{(2\pi x y)^2 \pi (y-m)} 
= \frac{ m^2 x^{-\sigma} }{ y^2 \pi (y-m)} .
\]
Summing over $1 \le m < y$, and using
$\left|\sum_{m < y} (-1)^m\right| \le 1$, and $\left|\sum_{m < y} (-1)^m m\right| \le \frac{y+1}{2}$, and
\begin{equation}\label{bnd:m2y2y-m}
    \sum_{1 \leq m < y } \frac{m^2}{y^2(y- m) } \le  \log y 
+\gamma +2\log 2
- \frac{3}{2}
+ \frac{1}{8y^2},
\end{equation}
we obtain the bound for \eqref{ineq-J0xm}:
\[    \bigg|\sum_{1 \leq m < y }J(0,x,m)\bigg| 
    \le \frac{x^{-\sigma}}{2\pi  y} 
+ \frac{ x^{-\sigma}(y+1)}{4\pi y^2} 
+ \frac{ x^{-\sigma} }{ \pi } \left(  \log y  +\gamma +2\log 2
- \frac{3}{2}
+ \frac{1}{8y^2} \right) 
\]
which gives the announced bound. It remains to show the sum bound \eqref{bnd:m2y2y-m}. 
We can rewrite the summand 
\[ \frac{m^2}{y^2(y- m) }=\frac{1}{y-m}-\frac{1}{y}-\frac{m}{y^2} \] 
Therefore, the sum
\[\sum_{1 \leq m < y } \frac{m^2}{y^2(y- m) }= \sum_{1 \leq m < y }\frac{1}{y-m}-\frac{\floor{y}}{y}-\frac{\floor{y}(\floor{y}+1)}{2y^2}.
\]
Using properties of the digamma function (see Appendix \ref{sec:digamma} in the ArXiv version of this paper), we find 
\[
\sum_{1 \leq m < y }\frac{1}{y-m} 
= \frac{\Gamma'}{\Gamma}(y)-\frac{\Gamma'}{\Gamma}(y-\floor{y})
<
\log y -\frac1{2y} -\frac{\Gamma'}{\Gamma}(1/2).
\]
This implies that 
\begin{equation*}
    \sum_{1 \leq m < y } \frac{m^2}{y^2(y- m) }
\le
\log y -\frac1{2y} -\frac{\Gamma'}{\Gamma}(1/2)
- \frac{\floor{y}}{y}
- \frac{\floor{y}(\floor{y}+1)}{2y^2}.
\end{equation*}
Using that $\floor{y}=y-\frac{1}{2}$ and $-\frac{\Gamma'}{\Gamma}(1/2) = \gamma+2\log 2$, we obtain the announced bound \eqref{bnd:m2y2y-m} upon simplification.
\smallskip 

We now apply Lemma \ref{Lemma 2 arias} to bound $J(N,\infty,m)$\footnote{Strictly speaking, we apply Lemma \ref{Lemma 2 arias} to the closed interval $[N,N_1]$ and then let $N_1\to\infty$.}. For $F(u)= 2\pi mu-t\log u$ and $G(u)=u^{-s}$, 
we observe that $\big| \frac{G(u)}{F'(u)}\big|
=\frac{|u^{-s}|}{|2\pi m-\frac{t}{u}|}$ 
is positive and decreasing for $0<\sigma<1$ and $u \ge N > \frac{t}{\pi m}$. Thus
\begin{equation}  
    |J(N,\infty,m)|
    \le 2 \max_{u \geq N} \left(\frac{u^{-\sigma}}{2\pi m - \frac{t}{u}}\right) 
    = \frac{2 N^{-\sigma}}{2\pi m - \frac{t}{N}}
     \le2 \frac{N^{-\sigma}}{\pi m}.
\end{equation}
We conclude to \eqref{est I3} by using $ \sum_{1 \leq m \leq  y } \frac{1}{m} \leq \log y + 1$. 
\end{proof}
Finally, the following results will be useful for bounding components appearing in the functional equation of the Riemann zeta function.
\subsection{Approximations for $\chi$ and $\Gamma$}\label{sec:chi}
The $\chi$ function provides a simple way of expressing the functional equation of the Riemann zeta function. 
For all complex value $s$,
\[\zeta(1-s) = \chi(s) \zeta(s),\]
where $\chi$ is defined as 
\begin{equation}
    \chi(s) = 2^s \pi^{s-1} \Gamma(1-s) \sin\left(\frac{\pi s}{2}\right), \label{def:chi}
\end{equation}
and satisfies (see \cite[Eq. 4.12.3]{Titchmarsh}), as $t\to\infty$: 
\begin{equation} 
   \chi(s)\sim \tilde{\chi}(s),\ \ \text{   where} \ \ \ \tilde{\chi}(\sigma+it) =  \left(
    \frac{2\pi}{|t|}
    \right)^{\sigma-\frac{1}{2}} 
    \left(
    \frac{|t|}{2\pi e}
    \right)^{-it}
    e^{\text{sgn}(t)\frac{\pi}{4}i }. \label{eq:chi_asymptotic}
\end{equation}
We recall here the explicit version from \cite[Proposition 1] {Simonic2020}. 
\begin{lemma}\label{lem:bndchi-Simonic}
Let $s=\sigma+it$ with $1/2 \leq  \sigma \leq 1$ and $|t| \geq t_0 \geq 1/\pi.$ Then
\begin{equation}
|\chi(s)| \leq 
 C_0(\sigma,t_0) \left( \frac{2\pi}{|t|} \right)^{\sigma - 1/2} ,
\label{eq:chiBound}
\end{equation}
with 
\begin{align}
    \label{def-C0}
    & C_0(\sigma,t_0) = 
    1 + \frac{1}{t_0} \left(
    C_1(\sigma, t_0) \big(1 + e^{-\pi t_0}\big) C_2(t_0) + C_3(t_0)
    \right), 
\\
& C_1(\sigma,t_0) = (1-\sigma)^2 \left(\frac{1}{2}+\frac{2}{\pi}\right) + (1-\sigma)(\sigma-\frac{1}{2})\left(\left(\frac{\pi}{2}\right)^2+\frac{1-\sigma}{2t_0}\right)   ,\\
& C_2(t_0) = \exp\left(\frac{1}{12t_0}+\frac{1}{90t_0^3}\right),  \ 
C_3(t_0) = \frac{C_2(t_0)-1}{\log C_2(t_0)}\left(\frac{1}{12}+\frac{1}{90t_0^2}\right)+t_0e^{-\pi t_0}C_2(t_0) .\label{eq:Ci} 
\end{align}
\end{lemma}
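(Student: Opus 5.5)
Since this is \cite[Proposition 1]{Simonic2020}, we can simply cite it; still, here is how we would reprove it directly from an explicit Stirling formula. We start from the definition \eqref{def:chi} and take absolute values:
\[
|\chi(s)| = 2^{\sigma}\pi^{\sigma-1}\,|\Gamma(1-s)|\,\Big|\sin\Big(\frac{\pi s}{2}\Big)\Big|.
\]
By symmetry ($\chi(\bar s)=\overline{\chi(s)}$) we may assume $t\ge t_0$. We then estimate the two non-trivial factors separately.

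\emph{The sine factor.} For the sine we use
\[
\Big|\sin\Big(\frac{\pi s}{2}\Big)\Big| \le \frac{e^{\pi t/2}}{2}\big(1+e^{-\pi t}\big) \le \frac{e^{\pi t/2}}{2}\big(1+e^{-\pi t_0}\big).
\]
This is the source of the factor $(1+e^{-\pi t_0})$ in \eqref{def-C0}.

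\emph{The gamma factor.} For $\Gamma(1-s)$, with $z=1-s=(1-\sigma)-it$ in the right half-plane, we use the explicit Stirling formula with Binet's remainder:
\[
\log\Gamma(z) = \Big(z-\tfrac12\Big)\log z - z + \tfrac12\log 2\pi + \frac{1}{12z} + \mathcal{O}^*\Big(\frac{1}{90|z|^3}\Big).
\]
Since $|z|\ge t\ge t_0$, the last two terms contribute at most a multiplicative factor $C_2(t_0)=\exp\big(\frac1{12t_0}+\frac1{90t_0^3}\big)$. Taking real parts of the main term gives
\[
\log|\Gamma(z)| = \big(\tfrac12-\sigma\big)\log|z| + t\arg z - (1-\sigma) + \tfrac12\log 2\pi.
\]
Here $\arg z=-\frac{\pi}{2}+\arctan\frac{1-\sigma}{t}$ and $\log|z|=\log t+\frac12\log\big(1+\frac{(1-\sigma)^2}{t^2}\big)$.

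\emph{Assembling the main term.} We expand $\arctan u\le u$ and $\log(1+u)\le u$. Then $t\arctan\frac{1-\sigma}{t}-(1-\sigma)\le 0$, up to a correction of order $(1-\sigma)^3/t^2$. The term $\big(\tfrac12-\sigma\big)\log|z|$ differs from $\big(\tfrac12-\sigma\big)\log t$ by at most $(\sigma-\tfrac12)\frac{(1-\sigma)^2}{2t^2}$. Combined with the sine bound, the main term becomes exactly
\[
\sqrt{2\pi}\,t^{1/2-\sigma}\,e^{-\pi t/2}\cdot\frac{e^{\pi t/2}}{2}\cdot 2^{\sigma}\pi^{\sigma-1} = \Big(\frac{2\pi}{t}\Big)^{\sigma-1/2}.
\]
The leftover correction factors have the form $\exp(\eta/t)$ with $\eta$ bounded in terms of $\sigma$ and $t_0$.

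\emph{Linearising the exponentials (main obstacle).} The hardest step is to turn these exponential corrections into the additive shape $1+\frac1{t_0}\big(C_1(1+e^{-\pi t_0})C_2+C_3\big)$ with precisely the stated constants. Our first attempt would be to bound the error contributions from the arctangent and logarithm expansions, which are polynomial in $(1-\sigma)$ and $(\sigma-\tfrac12)$, by $C_1(\sigma,t_0)/t$. The weights $\frac12+\frac2\pi$ and $(\pi/2)^2$ should arise from the elementary inequalities $|e^{iu}-1|\le|u|$ and $\arctan u\ge u-u^3/3$, used on $0\le u\le \frac{1}{2t_0}$. These $C_1$-terms then get multiplied by the sine factor $(1+e^{-\pi t_0})$ and the Stirling factor $C_2$. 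For the remaining Stirling remainder on its own, we would use convexity of $e^x$: for $0\le x\le\log C_2$ one has $e^x-1\le x\,\frac{C_2-1}{\log C_2}$. This produces the first part of $C_3(t_0)$. The term $t_0e^{-\pi t_0}C_2$ in $C_3$ absorbs the sine tail in the case $C_1=0$. Finally, each bound is monotone decreasing in $t$, so we may replace $t$ by $t_0$ throughout, which yields \eqref{eq:chiBound}.
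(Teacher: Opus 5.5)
The paper does not prove this lemma; it simply quotes \cite[Proposition 1]{Simonic2020}, as your first sentence does. Your reproof is a genuinely different, self-contained route, and it is correct. In fact it proves more than you realise, and the step you call the main obstacle is not needed.

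For $\frac12\le\sigma\le1$, both ``corrections'' in your assembly step have the favourable sign exactly:
\begin{itemize}
\item $(\frac12-\sigma)\log|z|\le(\frac12-\sigma)\log t$, since $\frac12-\sigma\le0$ and $|z|\ge t$;
\item $t\arctan\frac{1-\sigma}{t}\le1-\sigma$, since $\arctan u\le u$.
\end{itemize}
Your Stirling bound is also sound: the $\frac{1}{90|z|^3}$ is justified by the remainder estimate $\frac{1}{360|z|^3}\sec^4(\frac12\arg z)$ together with $|\arg z|\le\frac\pi2$. Combined with your sine bound, this already gives
\[
|\chi(s)|\le\big(1+e^{-\pi t_0}\big)\,C_2(t_0)\Big(\frac{2\pi}{|t|}\Big)^{\sigma-1/2}.
\]
Moreover $\log C_2(t_0)=\frac{1}{t_0}\big(\frac1{12}+\frac1{90t_0^2}\big)$, so $C_3(t_0)/t_0=C_2(t_0)-1+e^{-\pi t_0}C_2(t_0)$. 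Therefore
\[
C_0(\sigma,t_0)=\Big(1+\frac{C_1(\sigma,t_0)}{t_0}\Big)\big(1+e^{-\pi t_0}\big)C_2(t_0)\ \ge\ \big(1+e^{-\pi t_0}\big)C_2(t_0),
\]
because $C_1\ge0$ on $[\frac12,1]$. The lemma follows immediately.

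Your ``linearising'' paragraph, by contrast, only guesses where the weights $\frac12+\frac2\pi$ and $(\pi/2)^2$ come from and derives nothing. It would be the one unjustified step if you kept it, so drop it.

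The citation buys the paper brevity. Your route buys a self-contained argument showing that the $C_1$ term is pure slack for this one-sided bound, with a visibly smaller constant: at $t_0=2\pi$, $(1+e^{-\pi t_0})C_2(t_0)-1\approx0.0134$, versus $\delta_0\approx0.0596$ in Table~\ref{tab:epsilon0_maxima}.
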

Rearranging definition \eqref{def:chi} yields the following expression for $\Gamma(1-s)$:
\begin{lemma}\label{lem:gamma-chi2}
Let $s=\sigma+it$ with $0\leq \sigma \leq 1$ and $|t|\geq t_0\geq \frac{1}{\pi}$. We have \begin{equation*}
    \Gamma(1-s) \left(\dfrac{2\pi}{i}\right)^{s-1} = \chi(s) \left(1 + \mathcal{O}^*  \bigg(\dfrac{e^{-\pi t}}{1 - e^{-\pi t_0}}\right)\bigg) . 
\end{equation*}
\end{lemma}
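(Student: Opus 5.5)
The identity is purely algebraic: it comes from the definition \eqref{def:chi} of $\chi$, and no analytic input is needed. I would isolate $\Gamma(1-s)$, rewrite the sine as a combination of exponentials, and check that the correction factor is $1/(1-e^{i\pi s})$. Throughout I read $t$ as positive, i.e.\ $t=|t|\ge t_0$, which is the situation in which the lemma is applied, and I use the principal branch $i^{s-1}=e^{i\pi(s-1)/2}$.

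\emph{Step 1: isolate $\Gamma(1-s)$.} From \eqref{def:chi} we have
\[
\Gamma(1-s)(2\pi)^{s-1}=\frac{\chi(s)}{2\sin(\pi s/2)}.
\]
Multiplying by $i^{1-s}=e^{i\pi(1-s)/2}=i\,e^{-i\pi s/2}$ gives
\[
\Gamma(1-s)\left(\frac{2\pi}{i}\right)^{s-1}=\chi(s)\,\frac{i\,e^{-i\pi s/2}}{2\sin(\pi s/2)}.
\]

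\emph{Step 2: simplify the factor.} Write $2\sin(\pi s/2)=-i\big(e^{i\pi s/2}-e^{-i\pi s/2}\big)$. Then
\[
\frac{i\,e^{-i\pi s/2}}{-i\big(e^{i\pi s/2}-e^{-i\pi s/2}\big)}=\frac{-e^{-i\pi s/2}}{e^{i\pi s/2}-e^{-i\pi s/2}}=\frac{1}{1-e^{i\pi s}}.
\]

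\emph{Step 3: estimate the correction.} Set $w=e^{i\pi s}$, so that $|w|=e^{-\pi t}\le e^{-\pi t_0}<1$. Then
\[
\frac{1}{1-w}=1+\frac{w}{1-w},\qquad \left|\frac{w}{1-w}\right|\le \frac{e^{-\pi t}}{1-e^{-\pi t}}\le \frac{e^{-\pi t}}{1-e^{-\pi t_0}},
\]
where the last inequality uses $t\ge t_0$. This is exactly the claimed bound. The hypothesis $0\le\sigma\le1$ is never used; $t_0\ge 1/\pi$ only serves to keep the denominator $1-e^{-\pi t_0}$ away from $0$.

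The only point needing care is the sign and branch convention. For negative $t$ the same computation works with $i$ replaced by $-i$: one obtains the factor $1/(1-e^{-i\pi s})$, whose modulus is governed by $e^{-\pi|t|}$. I expect no real obstacle beyond fixing this convention and stating it explicitly.
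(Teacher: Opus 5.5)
Your proof is correct and matches the paper's argument: both obtain the exact identity $\Gamma(1-s)(2\pi/i)^{s-1}=\chi(s)/(1-e^{i\pi s})$ by writing the sine in exponentials (the paper writes it as $\chi(s)=(2\pi/i)^{s-1}\Gamma(1-s)(1-e^{i\pi s})$), and then bound $1/(1-w)-1=w/(1-w)$ using $|w|=e^{-\pi t}$. One small aside: the statement as literally written, with $e^{-\pi t}$ and $t\le -t_0$, also follows from the same identity without changing branches, because $|1-w|\ge e^{\pi|t|}-1\ge 1-e^{-\pi t_0}$ there, although the bound is then useless.
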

\section{An explicit van der Corput approximate Poisson summation formula}
\label{Section-Theorem-Bestimate}
Our first main theorem provides  estimates that are essential for evaluating the sums found in the explicit Approximate Functional Equations (\thmref{thm-AFE1} and \thmref{thm-AFE2}). 
This explicit result can also be of independent interest and applied in other contexts.
\subsection{Statement of results}
\begin{theorem} \label{thm-VDC}\ 
    \begin{enumerate}
    \item[{\bf Part I.}] Let $f(x)$ be a real function with continuous, positive, and strictly decreasing derivative  $f'(x)$ in $[a,b]$. Let $g(x)$ be a real positive decreasing function with a continuous derivative $g'(x)$. Let $N$ be a non-negative integer with $N< f'(b)$, and define $\delta = 1 - (f'(a)-\lfloor f'(a) \rfloor)$, we have:
    \begin{equation}
        \label{vanderCorputB}
        \sum_{a<n \leq b}g(x) e^{2\pi i f(n)} = 
    R_N(a,b) + \mathcal{O}^*\left(T_N(a,b)\right),
    \end{equation}
    where
\begin{align} 
\label{def-RNab}
R_N(a,b) =& \sum_{\nu =N}^{\lfloor f'(a) \rfloor} \int_{a}^{b} g(x)e^{2\pi i (f(x)-\nu x)} dx , 
\\
T_N(a,b) = & \frac{|g'(a)|+2\pi g(a) \left(f'(a)-N\right)}{2\pi^2 \left(f'(a)-N\right)} \bigg(\log(1+f'(a)-N)+\gamma +\log(1+\lfloor f'(a) \rfloor) - \frac{\Gamma'}{\Gamma}(\delta) 
\bigg. \nonumber
\\ \bigg.& -\frac1{2(1+\lfloor f'(a) \rfloor )}  -\frac{1}{2(1+f'(a)-N)}\bigg)  + \frac{g(b) |\tilde{S_1}(b,f'(a)-N)| + g(a) |\tilde{S_1}(a,f'(a)-N)|}{2\pi} \bigg. \nonumber
\\ \bigg.  &+ G(a,b) ,\label{def-TNab}
\end{align}
and $G$ and $\tilde{S_1}$ are respectively defined in \eqref{bigO1} and Lemma \ref{lem:geometric}.

In particular, if $a,b\in\mathbb{Z}+\frac12$, the definition of $T_N(a,b)$ simplifies to
\begin{equation}\label{def-TNab-integer+1/2}
    \begin{aligned}
       T_N(a,b) =& \frac{|g'(a)|+2\pi g(a) \left(f'(a)-N\right)}{2\pi^2 \left(f'(a)-N\right)} \bigg(\log(1+f'(a)-N)+\gamma +\log(1+\lfloor f'(a) \rfloor) - \frac{\Gamma'}{\Gamma}(\delta) 
        \bigg. \\ \bigg.& -\frac1{2(1+\lfloor f'(a) \rfloor)}  -\frac{1}{2(1+f'(a)-N)}\bigg)  + \frac{ g(a)+g(b)  }{2\pi}  \left(\log 2 + \frac{1}{f'(a)-N} \right).    \end{aligned}
\end{equation}
\item[{\bf Part II.}] If, alongside the assumptions of Part I, we assume the functions $g$ and $f$ are in $C^2(a,b)$ and $|g'|$, $|f''|$, $g''$, and $|g'f'+gf''|$ are all positive and decreasing on $[a,b]$, then the definition of $T_N(a,b)$ simplifies to:
\begin{equation}\label{def-TNab-partII}
\begin{aligned}
  T_N(a, b) = &\frac{g(b) |\tilde{S}_1(b, f'(a)-N)| + g(a) |\tilde{S}_1(a, f'(a) -N)|}{2\pi}+G(a,b)  \\
    &+ \frac{H(b)\mathcal{B}_{f'}(b) + H(a)\mathcal{B}_{f'}(a)}{4\pi^2} 
    + \frac{H_1(a) \mathcal{E}_1 (\delta,f'(a))+ H(a)|f''(a)| \mathcal{E}_2(\delta,f'(a))}{4\pi^3f'(a)}, 
    \end{aligned}
\end{equation}
where $\gamma$ is Euler's constant, and the bounds $G$ and $\tilde{S_1}$ are respectively defined in \eqref{bigO1} and Lemma \ref{lem:geometric}. Here, $H, H_1, \mathcal{B}_{f'}, \mathcal{E}_1$, and $\mathcal{E}_2$ are respectively defined in \eqref{eq:defH(x)}, \eqref{eq:defH_1(x)}, \eqref{def-B},\eqref{auxilliary_error1}, and \eqref{auxillary_error2}.

In particular, if $a,b\in\mathbb{Z}+\frac12$
\eqref{def-TNab-partII} simplifies to
\begin{equation}\label{def-TNab-integer+1/2-partII}
    \begin{split}
       T_N(a,b) =& \Big(\frac{\log 2}{2\pi} + \frac{1}{2\pi \left(f'(a)-N\right)}\Big) \left( g(b) + g(a) \right)  
        + \frac{H(b)}{4\pi^2} \mathcal{B}_{f'}(b) + \frac{H(a)}{4\pi^2} \mathcal{B}_{f'}(a) 
        \\& + \frac{H_1(a)}{4\pi^3f'(a)} \mathcal{E}_1(\delta,f'(a))+ \frac{H(a)|f''(a)|}{4\pi^3f'(a)} \mathcal{E}_2(\delta,f'(a)). 
\end{split} 
\end{equation}
\end{enumerate}
\end{theorem}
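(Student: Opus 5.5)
We follow the classical route of Titchmarsh's Lemma 4.7, but keep every step explicit and feed in the oscillatory-sum lemmas of Section \ref{secn:prel-lemmas} in place of the crude bound $|S_1|\le\log(1+y)$.

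\textbf{Setting up.} Write the sum as a Riemann--Stieltjes integral and apply Euler--Maclaurin with the sawtooth $\psi(x)=x-\lfloor x\rfloor-\tfrac12$, or equivalently the truncated Fourier expansion of $\psi$:
\[
\sum_{a<n\le b} g(n)e^{2\pi i f(n)}=\int_a^b g(x)e^{2\pi i f(x)}\,dx+\Big[\psi(x)g(x)e^{2\pi i f(x)}\Big]_a^b-\int_a^b\psi(x)\,d\big(g(x)e^{2\pi i f(x)}\big).
\]
Inserting $\psi(x)=-\sum_{\nu\ge1}\frac{\sin 2\pi\nu x}{\pi\nu}$ and integrating by parts back produces the terms $\int_a^b g(x)e^{2\pi i(f(x)\mp\nu x)}dx$ for all $\nu\ge1$. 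The boundary terms vanish up to $\tilde S_1$-type contributions, and they are exactly $0$ when $a,b\in\Z+\frac12$ since $\psi(a)=\psi(b)=0$.

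We group the $\nu$'s into three ranges. The range $N\le\nu\le\lfloor f'(a)\rfloor$ (with $\nu=0$ treated as the main integral, shifted by $N$ if needed) contains the stationary points and is kept as $R_N(a,b)$. The terms with $\nu<N$ are not present because $N<f'(b)$. The terms with $-\nu$, and those with $\nu>\lfloor f'(a)\rfloor$, have no stationary point. A cleaner way to organize this is to first substitute $f(x)\to f(x)-Nx$, which reduces everything to the case $N=0$ with derivative $f'-N\in(0,f'(a)-N]$.

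\textbf{Part I: non-stationary tails.} For each non-stationary $\nu$ we integrate by parts once. This gives
\[
\int_a^b g\,e^{2\pi i(f-\nu x)}=\Big[\frac{g\,e^{2\pi i(f-\nu x)}}{2\pi i(f'-\nu)}\Big]_a^b-\int_a^b\Big(\frac{g}{f'-\nu}\Big)'\frac{e^{2\pi i(f-\nu x)}}{2\pi i}\,dx.
\]
\begin{itemize}
\item The boundary terms carry the factor $e^{-2\pi i\nu a}$ or $e^{-2\pi i\nu b}$. After summing over $\nu$ and splitting $\frac{1}{\nu(\nu-y)}$ by partial fractions, they become $S_1(a,\cdot)$ and $S_1(b,\cdot)$ together with the tails of Lemma \ref{lemma_bnd_S_plus_minus}. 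Here $y=f'(a)-N$ or $f'(b)-N$. This yields the $\tilde S_1$ terms, and the bound $\log2+1/(f'(a)-N)$ when $a,b\in\Z+\tfrac12$, by Lemma \ref{lem:geometric}.
\item The remaining integrals are bounded in absolute value using the monotonicity of $g$ and $f'$. This gives a sum like $\sum_\nu\big(\frac{|g'(a)|}{2\pi\nu(\nu-y)}+\frac{g(a)}{\nu\ldots}\big)$, which Lemma \ref{lem:harmonic-lead} (equations \eqref{bnd-sum-nu(nu-y)blower-bupper} and \eqref{eq:sum_infty_power_k} with $k=1$) evaluates. It produces the logarithms, $\gamma$, the $-\frac{\Gamma'}{\Gamma}(\delta)$ term and the $-\frac1{2(1+\cdot)}$ corrections in \eqref{def-TNab}.
\item Collecting what is left, such as the $x$-integral of the tail and the interchange of sum and integral, gives $G(a,b)$.
\end{itemize}

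\textbf{Part II: the main obstacle.} Part II removes the logarithm, and this is the hard step. The plan is to integrate by parts a second time in the non-stationary integrals, so that the $x$-integrals acquire a factor $(f'-\nu)^{-2}$ or $(f'-\nu)^{-3}$ and become absolutely summable in $\nu$ without a log. This is where the hypotheses that $|g'|$, $|f''|$, $g''$ and $|g'f'+gf''|$ are monotone are used: they make the second-derivative weights $H$ and $H_1$ monotone, so the integrals can be bounded by endpoint values. The new boundary terms at $a$ and $b$ give $H\,\mathcal B_{f'}/(4\pi^2)$, through the oscillatory tails of Lemma \ref{lemma_bnd_S_plus_minus}. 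The interior sums $\sum 1/(\nu(\nu-y)^2)$ and $\sum 1/(\nu(\nu-y)^3)$ are bounded by \eqref{bnd-sum-nu(nu-y)^2bupper} and \eqref{bnd-sum-nu(nu-y)^3bupper}, giving $\mathcal E_1$ and $\mathcal E_2$, with the $\delta^{-3}$ dependence. The delicate point is checking that the $\log(N+1)$ terms in those lemmas cancel, or appear only with a favorable sign, after the $1/y^2$ and $1/y^3$ weights are combined. If they do not, we would retain them inside $\mathcal E_1$ and $\mathcal E_2$.
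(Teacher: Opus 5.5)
\textbf{Part I, as written, does not go through.}

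You integrate back for \emph{every} $\nu\ge1$, so the non-stationary terms become $\int_a^b g\,e^{2\pi i(f(x)\mp\nu x)}\,dx$ and the Fourier weight $1/\nu$ is lost. After your single integration by parts, you bound the remaining integral $\frac{1}{2\pi i}\int_a^b\big(\frac{g}{f'-\nu}\big)'e^{2\pi i(f-\nu x)}\,dx$ in absolute value using monotonicity. That only gives $\frac{1}{2\pi}\big(\frac{g(a)}{\nu-f'(a)}-\frac{g(b)}{\nu-f'(b)}\big)$, which is of order $1/\nu$, so the sum over $\nu>\lfloor f'(a)\rfloor$ diverges. The integrand also involves $f''$, which Part I does not assume exists. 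The summand $\frac{|g'(a)|}{2\pi\nu(\nu-y)}$ you then quote contains exactly the factor $1/\nu$ your derivation removed, so the appeal to Lemma~\ref{lem:harmonic-lead} is unsupported.

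\textbf{The missing idea.} Integrate back \emph{only} in the stationary range $1\le\nu\le\lfloor f'(a)\rfloor$. Leave the other frequencies in the Fourier-coefficient form $\frac{1}{2\pi i\nu}\int_a^b(g'+2\pi i gf')e^{2\pi i(f\mp\nu x)}dx$. Then apply Lemma~\ref{Lemma 2 arias} directly, with no integration by parts, to $h=g'$ and $h=gf'$. This gives $|h(a)|/(\pi|f'(a)\mp\nu|)$, hence a contribution $\frac{|g'(a)|+2\pi g(a)f'(a)}{2\pi^2}\sum_\nu\frac{1}{\nu(\nu\mp f'(a))}$. By \eqref{bnd-sum-nu(nu-y)blower-bupper} and \eqref{eq:sum_infty_power_k}, this is the logarithmic block of \eqref{def-TNab}.

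\textbf{Bookkeeping of the other terms.} The same split also corrects your attribution of the remaining terms.
\begin{enumerate}
\item The $\tilde S_1$ terms do not come from non-stationary tails. They are the boundary terms $\frac{g(x)e^{2\pi i f(x)}}{2\pi i}S_1(x,\lfloor f'(a)\rfloor)$ at $x=a,b$, produced by integrating back the finitely many stationary frequencies, and are bounded by Lemma~\ref{lem:geometric}.
\item The infinite oscillatory tails of Lemma~\ref{lemma_bnd_S_plus_minus} enter only in Part II, as $\mathcal B_{f'}$. There the $1/\nu$-weighted integrals are integrated by parts once and Lemma~\ref{Lemma 2 arias} is applied to the remainders. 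This is what produces $\sum\frac{1}{\nu(\nu-y)^2}$ and $\sum\frac{1}{\nu(\nu-y)^3}$, and why the hypotheses are imposed on $g''$ and $g'f'+gf''$.
\item With your bookkeeping you would at best obtain a bound of a different shape from the stated $T_N$.
\item $G(a,b)$ is simply the Euler--Maclaurin boundary term, not a collection of leftover pieces.
\end{enumerate}
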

\begin{remark}
For simplicity, we stated the case where $N=0$ in Corollary \ref{cor-VDC} in the Introduction. The general case for $N< f'(b)$ is easily recovered without loss of generality by replacing $f(x)$ with $f(x)-Nx$. The condition $N< f'(b)$ is required so that $f(x)-Nx$ has a positive derivative. 
\end{remark}
\begin{corollary}\label{cor-thmVDC-partII}
Under the same assumptions as in \thmref{thm-VDC} Part-II, and for $\delta=\frac{1}{2}$, we have:
\begin{align} 
\label{def:B_f(Z+1/2)}
\mathcal{B}_{f'}(x) =&  \frac{1}{f'(x)}\Big( \frac{\pi}{2}  + \frac{1}{(\lfloor f'(a) \rfloor-N+1)} + \log 2+\frac{3}{2(f'(x)+1)}\Big),
\\
\label{def:E1(Z+1/2)}
\mathcal{E}_1 \left(1/2, f'(a)\right)=&
\frac{46}{9} - \frac{1}{f'(a)-N}\Big(\log(\lfloor f'(a) \rfloor -N+ 1) - \frac{1}{\lfloor f'(a) \rfloor-N + 1} - \frac{\Gamma'}{\Gamma} (1/2) \Big.
\\& + \Big.\log(f'(a) + 1) + \gamma - \frac{1+2(f'(a)-N)}{2(1+f'(a)-N)}\Big),
\\
\label{def:E2(Z+1/2)}
\mathcal{E}_2\left(1/2, f'(a)\right)=&
\frac{230}{27} - \frac{14}{3(f'(a)-N)} +\frac{1}{(f'(a)-N)^2}\Big( \log(\lfloor f'(a) \rfloor-N + 1) - \frac{1}{2(\lfloor f'(a) \rfloor-N + 1)} 
\Big.\\ &\Big.
- \frac{\Gamma'}{\Gamma}(1/2) +\log(f'(a)-N + 1) + \gamma - \frac{3(f'(a)-N) + 3(f'(a)-N)^2 + 1}{2(1+(f'(a)-N))^2}\Big).
\end{align}  
\end{corollary}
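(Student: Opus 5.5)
This corollary only specializes the auxiliary quantities $\mathcal{B}_{f'}$, $\mathcal{E}_1$, $\mathcal{E}_2$ of Theorem~\ref{thm-VDC} Part II at $\delta=\tfrac12$, so the plan is a substitution argument. We start from the general definitions \eqref{def-B}, \eqref{auxilliary_error1}, \eqref{auxillary_error2}. In the proof of Part II these arise from three sources: a boundary tail of the form $\sum_{\nu>\lfloor f'(a)\rfloor}e^{-2\pi i\nu x}/(\nu(\nu-y))$ and a companion sum $\sum_{\nu\ge1}e^{-2\pi i\nu x}/(\nu(\nu+y))$, both evaluated at $x\in\{a,b\}$, and second-order harmonic sums $\sum 1/(\nu(\nu\mp y)^k)$ with $k=2,3$, coming from integrating by parts a second time. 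We fix $y=f'(x)-N$ (respectively $f'(a)-N$) and take the truncation index to be $\lfloor f'(a)\rfloor-N$. The quantity $\Delta=\lfloor f'(a)\rfloor-N+1-(f'(a)-N)$ then equals $\delta$, which we set to $\tfrac12$.

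For $\mathcal{B}_{f'}$, the standing hypothesis $a,b\in\Z+\tfrac12$ implicit in the choice $\delta=\tfrac12$ lets us use the half-integer versions of $Z_0$ and $Z_1$ from Lemma~\ref{lemma_bnd_S_plus_minus}. The requirement $N+1-y\ge\tfrac12$ holds because $\Delta=\delta=\tfrac12$. We bound $Z_0\le \frac1y(\frac\pi2+\frac1{\lfloor f'(a)\rfloor-N+1})$ and $Z_1=\frac{\log2}{y}+\frac{3}{2y(y+1)}$. Adding the two gives exactly \eqref{def:B_f(Z+1/2)}, up to writing the prefactor as $1/f'(x)$.

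For $\mathcal{E}_1$ and $\mathcal{E}_2$, we substitute $\Delta=\tfrac12$ into \eqref{bnd-sum-nu(nu-y)^2bupper} and \eqref{bnd-sum-nu(nu-y)^3bupper} of Lemma~\ref{lem:harmonic-lead}. This turns the $\Delta$-polynomial pieces into constants: $\frac1{\Delta^2}+\frac1{(\Delta+1)^2}+\frac1{\Delta+1}=4+\frac49+\frac23=\frac{46}{9}$, and $\frac1{\Delta^3}+\frac1{(\Delta+1)^3}+\frac1{2(\Delta+1)^2}=8+\frac8{27}+\frac29=\frac{230}{27}$. The term $-\frac1{y^2}(\frac1{\Delta^2}+\frac1{\Delta+1})$ becomes $-\frac{14}{3y^2}$, and the digamma values reduce to $\frac{\Gamma'}{\Gamma}(\tfrac12)$. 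We then add the ``plus'' sums from \eqref{eq:sum_infty_power_k} with $k=2,3$, whose $q_k$ terms produce $\frac{1+2y}{2(1+y)}$ and $\frac{1+3y+3y^2}{2(1+y)^2}$. After factoring out the appropriate power of $y$ to match the normalization $H_1(a)\mathcal{E}_1/(4\pi^3f'(a))$ and $H(a)|f''(a)|\mathcal{E}_2/(4\pi^3f'(a))$, the combined bracket should reproduce \eqref{def:E1(Z+1/2)} and \eqref{def:E2(Z+1/2)}.

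The main obstacle is bookkeeping signs. The $\log$, $\gamma$ and digamma terms from the minus and plus sums enter with alternating signs ($-\frac1{y^2}(\cdots)$ versus $+\frac1{y^3}(\cdots)$), and we must check that the displayed grouping matches the general definitions. In particular, $\mathcal{E}_1$ carries a net minus sign in front of the bracket, whereas $\mathcal{E}_2$ carries a plus sign on the $1/(f'(a)-N)^2$ bracket. We also need to confirm that the powers of $y$ are absorbed consistently by the $1/f'(a)$ prefactor, and that the replacement of $f'(a)-N$ by $f'(a)$ in one logarithm is an upper bound rather than an equality. I would settle this by carrying the general $\Delta$ expressions symbolically through the Part II definitions before evaluating at $\Delta=\tfrac12$.
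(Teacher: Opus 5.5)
Your route matches the paper's: the corollary is just \eqref{def-B}, \eqref{auxilliary_error1}, \eqref{auxillary_error2} specialized at $\Delta=\delta=\frac12$, using the half-integer forms of $Z_0,Z_1$ from Lemma~\ref{lemma_bnd_S_plus_minus}. Your constants $\frac{46}{9}$, $\frac{230}{27}$, $\frac{14}{3}$ are right, and your treatment of $\mathcal{B}_{f'}$ and $\mathcal{E}_2$ goes through.

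The gap is your assertion that the combined bracket reproduces \eqref{def:E1(Z+1/2)}: it does not. Take $N=0$ and $y=f'(a)$. The bound \eqref{eq:sum_infty_power_k} with $k=2$ contributes $\frac{1}{y^2}\big(\log(y+1)+\gamma-\frac{1+2y}{2(1+y)}\big)$, which is a \emph{positive} term. Add it to \eqref{bnd-sum-nu(nu-y)^2bupper} at $\Delta=\frac12$ and factor out $\frac1y$; equivalently, combine the $H_1(a)$-terms of \eqref{bnd-S12-final} and \eqref{bnd-S2-final}. This gives
\[
\frac{46}{9}-\frac1y\Big(\log(\lfloor y\rfloor+1)-\frac{1}{\lfloor y\rfloor+1}-\frac{\Gamma'}{\Gamma}\big(\tfrac12\big)\Big)+\frac1y\Big(\log(y+1)+\gamma-\frac{1+2y}{2(1+y)}\Big),
\]
which is exactly the form the paper itself uses in \eqref{limE3}. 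The displayed $\mathcal{E}_1$, inherited from \eqref{auxilliary_error1}, puts these last terms inside the negatively signed bracket. That is a sign slip, not a regrouping.

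The displayed expression is not even a valid bound for what it must control. At $y=10.5$,
\[
y\Big(\sum_{\nu>10}\frac{1}{\nu(\nu-y)^2}+\sum_{\nu\ge1}\frac{1}{\nu(\nu+y)^2}\Big)\approx 4.72,
\]
whereas \eqref{def:E1(Z+1/2)} evaluates to $\approx 4.51$. So the ``net minus sign'' you planned to confirm is exactly where the argument fails. The displayed $\mathcal{E}_1$ can only be reached by formally substituting into the erroneous definition; it cannot be derived from the lemmas. Relatedly, replacing $\log(f'(a)-N+1)$ by $\log(f'(a)+1)$ is an upper bound only once the sign is corrected. Nothing downstream breaks: the corrected bracket is still at most $\frac{46}{9}$, and that is all the proof of \thmref{thm-AFE2} uses.

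Two smaller points. First, $\delta=\frac12$ is a condition on $f'(a)$ and does not imply $a,b\in\Z+\frac12$. That is a separate hypothesis (as in \eqref{def-TNab-integer+1/2-partII}), and you need it to use the half-integer $Z_0,Z_1$. At $x=b$, the requirement $\lfloor f'(a)\rfloor+1-f'(b)\ge\frac12$ comes from $f'$ being decreasing, not from $\Delta=\delta$. Second, your phrase ``up to writing the prefactor as $1/f'(x)$'' is harmless only for $N=0$. After the shift the natural variable is $f'(x)-N$, and the statement mixes the two.
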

\begin{remark}[Comparison with similar Theorems]
\thmref{thm-VDC} and \corref{cor-thmVDC-partII} improve Arias de Reyna'\cite[Lemmas 4 and 5]{dereyna2024approximationzetafunctiondirichlet}.
Lemma 4 establishes the estimate \eqref{vanderCorputB} in the case $g(n)=1$, with 
\[
T_0(a,b) =  \frac{ G}{\pi}\Big(3\log(1+f'(a))+\pi+3\gamma+\frac{1}{\delta}\Big) \ \text{and}\ G= 1.
\]
Lemma 5 generalizes this via partial summation, with
\[
G= |g(b)|+\int_a^b |g'(x)| dx=g(a)
\]
when $g$ is positive and decreasing. For instance, for $\s\geq 0$, $G=a^{-\sigma}$ when $g(u) = u^{-\s}$. 
Thus, for $f'(a)$ large enough, 
\[ \lim_{b\to\infty} T_0(a,b) \approx  C \log(1+f'(a) ) a^{-\sigma}
\] 
with \[
C=\frac3{\pi}, \frac2{\pi}, \text{ and }0
\] respectively in \cite[Lemma 5]{dereyna2024approximationzetafunctiondirichlet}, Part I and Part II of  \thmref{thm-VDC} (choosing $\delta $ away from $0$). 
\end{remark}
\subsection{Proof of \thmref{thm-VDC}}\label{Section-Proof-Theorem-Bestimate}
\begin{proof} 
The primary objective of this proof is to approximate the sum by an integral and rigorously bound the resulting error terms. 
We may assume without loss of generality that the non-negative integer $N$ is equal to zero. This is a valid assumption because the original inequality remains unchanged if we replace $f(x)$ with $f(x)-Nx$ and $N$ with $0$. Note that this substitution does not affect the derivative. This implies that the floor function of the absolute value of the new derivative, $\lfloor |f'(x)-N| \rfloor$, simplifies to $\lfloor |f'(x)| \rfloor-N$, which is non-negative.

\smallskip
The Euler-Maclaurin formula relates a sum to an integral:
\begin{multline}
\label{Stieltjes}
    \sum_{a<n \leq b}g(x) e^{2\pi i f(n)} - \int_{a}^{b} g(x)e^{2\pi i f(x)} dx  =\bigg(\lfloor b \rfloor - b + \frac{1}{2}\bigg)g(b)e^{2\pi i f(b)} - \bigg(\lfloor a \rfloor - a + \frac{1}{2}\bigg)g(a)e^{2\pi i f(a)} \\
    -\int_{a}^{b} \bigg(\lfloor x \rfloor - x + \frac{1}{2}\bigg)(g'(x)+2\pi i g(x)f'(x))e^{2\pi i f(x)} dx.
\end{multline}
We apply a trivial bound to the first two boundary terms via the triangle inequality, defining $G(a,b)$ as:
\begin{equation}\label{bigO1}
\begin{split}
G(a,b) &= \bigg| \left(\lfloor b \rfloor - b + \frac{1}{2}\right)g(b)e^{2\pi i f(b)} - \left(\lfloor a \rfloor - a + \frac{1}{2}\right)g(a)e^{2\pi i f(a)}\bigg|  \\
&= \begin{cases}
\mathcal{O}^*\left( \frac{g(b)+ g(a)}{2} \right), & \\ 
0 & \text{if } a,b \in \mathbb{Z}+\frac{1}{2}.
\end{cases}
\end{split}
\end{equation}
Next, we substitute the Fourier series representation of $( \lfloor x \rfloor - x + \frac{1}{2}  )$ into the left integral of \eqref{Stieltjes}. 
\begin{equation}
- \Big( \lfloor x \rfloor - x + \frac{1}{2} \Big)=  \frac{1}{\pi} \sum_{\nu \ge1} \frac{\sin(2\pi \nu x)}{\nu } = \frac1{2\pi i}\sum _{\nu =1}^{\infty}\big(e^{-2\pi i\nu x}
    - e^{2\pi i\nu x}\big) .\label{fourier}
 \end{equation}
 Applying the Dominated Convergence Theorem justifies interchanging the order of summation and integration. By combining the boundary term estimate from \eqref{bigO1} with the Fourier expansion \eqref{fourier}, we rewrite \eqref{Stieltjes} as:
    \begin{equation}
    \label{eqn-exp}
        \sum_{a<n \leq b}g(x) e^{2\pi i f(n)} - \int_{a}^{b} g(x)e^{2\pi i f(x)} dx 
        = S_1-S_2 +G(a,b), 
    \end{equation}
    where $G(
    a,b)$ is defined in \eqref{bigO1}, and 
\begin{align}
    S_1 & =\sum_{\nu=1}^{\infty}\frac{1}{2\pi i \nu}\int_a^b\left(g
    '(x)+2\pi ig(x)f'(x)\right)e^{2\pi i\left(f(x)-\nu x\right)}dx,\label{def-S1}\\
    S_2 & = \sum_{\nu=1}^{\infty}\frac{1}{2\pi i \nu}\int_a^b\left(g
    '(x)+2\pi ig(x)f'(x)\right)e^{2\pi i\left(f(x)+\nu x\right)}dx. \label{def-S2}
\end{align}
The behavior of each integral in the sum $S_1$ depends critically on the sign of the derivative of the exponent, given by $f'(x)-v$. Since $f'(x)$ is a strictly decreasing function, we must partition the sum into two distinct parts at the point where this term might change its sign. Therefore,
we truncate the sum $S_1$ at $\lfloor f'(a)\rfloor$:
\begin{equation}
    \label{S1-S11-S12}
    S_1=S_{11}+S_{12},
\end{equation}
where 
\begin{equation}
    S_{11}  = \sum_{1 \leq \nu  \le\lfloor f'(a) \rfloor}\frac{1}{2\pi i \nu}\int_a^b\left(g
    '(x)+2\pi ig(x)f'(x)\right)e^{2\pi i\left(f(x)-\nu x\right)}dx,\label{def-S11}
    \end{equation}
    and
    \begin{equation}
    S_{12}  = \sum_{ \nu  > \lfloor f'(a) \rfloor} \frac{1}{2\pi i \nu}\int_a^b\left(g
    '(x)+2\pi ig(x)f'(x)\right)e^{2\pi i\left(f(x)-\nu x\right)}dx. \label{def-S12}
\end{equation}
Rewriting and using integration by parts, we get:
 \begin{equation}
S_{11} =
 \sum_{1 \leq \nu  \le\lfloor f'(a) \rfloor} \frac1{\nu}  \int_a^b e^{-2\pi i \nu x } d\left( \frac{g(x) e^{2\pi i f(x)}}{2\pi i} \right)  =   \Sigma_{11}(b)-\Sigma_{11}(a)   +\Sigma_{12} , \label{S_11}
  \end{equation}
        where
\begin{equation}
    \Sigma_{11}(x)
    =  \sum_{1\leq \nu \leq \floor{f'(a)}} \frac{g(x) e^{2\pi i (f(x)-\nu x)}}{2\pi i \nu } 
    = \frac{g(x)e^{2\pi i f(x)}}{2\pi i} {S_1}(x,\floor{f'(a)})\label{def-Sigma11}
\end{equation}
with $S_1(x,\floor f'(a))$ defined in  \eqref{def-Sxy}, and
\begin{equation}
    \Sigma_{12} =   \sum_{1 \leq \nu \le\lfloor f'(a) \rfloor} \int_a^b g(x)e^{2\pi i(f(x)-\nu x)} dx .\label{def-Sigma12}
\end{equation}
Together with \eqref{eqn-exp}, \eqref{S1-S11-S12}, and \eqref{S_11}, we have:
\begin{multline}
       \sum_{a<n \leq b}g(x) e^{2\pi i f(n)} - \int_{a}^{b} g(x)e^{2\pi i f(x)} dx = \left( \Sigma_{11}(b)-\Sigma_{11}(a)\right)+\Sigma_{12}+S_{12}-S_2 +G(a,b). \label{sumS1-S2+G} 
\end{multline}
From the definition \eqref{def-Sigma12} of $\Sigma_{12}$, we observe that 
\begin{equation}
    \label{MainSigma12}
    \int_{a}^{b} g(x)e^{2\pi i f(x)} dx  + \Sigma_{12} 
    = \sum_{\nu =0}^{\lfloor f'(a) \rfloor} \int_a^b g
    (x)e^{2\pi i(f(x)-\nu x)} dx ,
\end{equation}
so that with \eqref{sumS1-S2+G},
\begin{multline}
    \sum_{a<n \leq b}g(x) e^{2\pi i f(n)} 
    = \sum_{\nu =0}^{\lfloor f'(a) \rfloor} \int_a^b g
    (x)e^{2\pi i(f(x)-\nu x)} dx  + \left( \Sigma_{11}(b)-\Sigma_{11}(a)\right)+S_{12}-S_2 +G(a,b). \label{SumS1-S2+G_nu=0}
\end{multline}
Applying \eqref{bnd-Zxy} from Lemma \ref{lem:geometric} to evaluate $\Sigma_{11}$:
\begin{equation}
    |\Sigma_{11}(b)-\Sigma_{11}(a)|
    \leq \frac{1}{2\pi } \left( g(b) |\tilde{S_1}(b,\lfloor f'(a) \rfloor)| + g(a)|\tilde{S_1}(a,\lfloor f'(a) \rfloor)| \right).
    \label{bound-sigma11ab}
\end{equation}
Denoting 
\begin{equation}\label{def-R0ab}
    R_0(a,b) = \sum_{\nu =0}^{\lfloor f'(a) \rfloor} \int_{a}^{b} g(x)e^{2\pi i (f(x)-\nu x)} dx ,
\end{equation}
we conclude with \eqref{SumS1-S2+G_nu=0} that
 \begin{equation}
    \label{eq:defsum(atob)}
        \bigg| \sum_{a<n \leq b}g(x) e^{2\pi i f(n)}\!
       - R_0(a,b)\bigg| 
        \!\leq  \frac{1}{2\pi } \left( g(b) |\tilde{S_1}(b,\lfloor f'(a) \rfloor)| + g(a)|\tilde{S_1}(a,\lfloor f'(a) \rfloor)| \right)   +|S_{12}|+|S_2|
        +G(a,b) .
        \end{equation}

\smallskip
We are separating the cases for study of $S_{12}$ and $S_2$ for part I. and part II. to give better approximations respectively when $f'(a)$ is small (which is the case in AFE1) and large (in the case of AFE2).

\subsubsection{Proof of Part I. of \thmref{thm-VDC}}

\begin{itemize}
\item   {\it Study of $S_{12}$:}\\
From the definition \eqref{def-S12}:
\begin{equation} \label{eq:defS_12}
\begin{split}
    S_{12}  &=\sum_{ \nu  > \lfloor f'(a) \rfloor}\frac{1}{2\pi i \nu}\left(I_{g'}(-\nu)+2\pi i I_{gf'}(-\nu)\right) .
    \end{split}
\end{equation}
where,
\begin{equation}
    I_h(-\nu)=\int_a^bh(x)e^{2\pi i \left(f(x)-\nu x\right)}dx \label{Ih}
\end{equation}

For $\nu  > \lfloor f'(a) \rfloor$, the function $\frac{h(x)}{\pi(\nu -f'(x))}$ is decreasing and positive.
Applying Lemma \ref{Lemma 2 arias} on integral $I_h(-\nu)$ defined in \eqref{Ih} gives:
\begin{equation} \label{bndI_h(-nu)}|I_h(-\nu)| \leq \max_{a\leq x\leq b} \bigg| \frac{h(x)}{\pi\left(f'(x)-\nu\right)}\bigg|  = \frac{|h(a)|}{\pi(\nu -f'(a))} .
\end{equation}
By substituting $h(x)$ with $g'(x)$ and $g(x)f'(x)$ in \eqref{bndI_h(-nu)}, we obtain a bound for the integral component $I_{g'}$ and $I_{gf'}$ in \eqref{eq:defS_12}.
This allows us to bound the second error sum:
\begin{equation}
    \label{bnd1-S12}
    \begin{split}
    |S_{12}|&\leq \sum_{\nu >\lfloor f'(a) \rfloor} \frac{1}{2\pi \nu } \Big( \big| ( I_{g'}(\nu) \big| +2\pi   \big| I_{gf'}(\nu)\big| \Big) 
    \\&\leq  \frac{ |g'(a)|+2\pi g(a)f'(a)}{2\pi^2}  \sum_{\nu >\lfloor f'(a) \rfloor} \frac{1}{ \nu (\nu -f'(a))} 
\end{split}
\end{equation} 
The sum $\sum_{\nu >\lfloor f'(a) \rfloor} \frac{1}{ \nu (\nu -f'(a))}$ can be bounded using \eqref{bnd-sum-nu(nu-y)blower-bupper} from Lemma \ref{lem:harmonic-lead}
\begin{equation}
\label{rewriteS12}
    \sum_{\nu>\lfloor f'(a) \rfloor}\frac{1}{\nu(\nu-f'(a))} \leq \frac1{f'(a)} \left( \log(\lfloor f'(a) \rfloor+1)-\frac1{2(\lfloor f'(a) \rfloor+1)} 
- \frac{\Gamma'}{\Gamma}(\delta)\right)  ,
\end{equation}
where $\delta = 1-(f'(a)-\lfloor f'(a) \rfloor)$. 
We conclude
\begin{equation}
    \label{bnd-S_12}
    |S_{12}|
    \leq \frac{ |g'(a)|+2\pi g(a)f'(a)}{2\pi^2 f'(a)}  \left( \log(\lfloor f'(a) \rfloor+1)-\frac1{2(\lfloor f'(a) \rfloor+1)} 
- \frac{\Gamma'}{\Gamma}(\delta)\right) . 
\end{equation} 

\item {\it Study of $S_2$:}  

Consider the sum from \eqref{def-S11}\begin{equation} \label{sum_S_2_Ig'_I_(gf')}
    S_2=\sum_{\nu=1}^{\infty}\frac{1}{2\pi i \nu}\left(I_{g'}(+\nu)+2\pi i I_{gf'}(+\nu)\right) ,
    \end{equation}
where
\begin{equation}
    I_h(+\nu)=\int_a^bh(x)e^{2\pi i (f(x)+\nu x)}dx,
\end{equation}
    Here, the function $f'(x)+\nu $, is non-null and continuous and $\bigg| \frac{h(x)}{\pi (f'(x)+\nu )}\bigg| $ is also non-zero and decreasing for $h= g' \text{ and } h =gf'$. So, by Lemma \ref{Lemma 2 arias}:
    \begin{equation} \label{I_h}
    |I_h(+\nu)|\le\max_{a\leq x \leq b}\frac{h(x)}{\pi(f'(x)+\nu )}=\frac{|h(a)|}{\pi(f'(a)+\nu )}.
    \end{equation}
    Thus, $S_2$ as defined in \eqref{sum_S_2_Ig'_I_(gf')} can be bounded by using \eqref{I_h} and \eqref{eq:sum_infty_power_k}.
 \begin{equation}
     |S_2|  \le\frac{1}{2\pi^2}\sum_{\nu \ge1}\frac{|g'(a)|+2\pi g(a)f'(a)}{\nu (f'(a)+\nu)}\leq \frac{|g'(a)|+2\pi g(a)f'(a)}{2\pi^2 f'(a)}\left(\gamma+\log(1+f'(a))-\frac{1}{2(f'(a)+1)}\right).
    \label{bnd-S_2}
\end{equation}
\end{itemize}
Finally, we combine the bounds for $S_{12}$, $S_2$ from \eqref{bnd-S_12} and \eqref{bnd-S_2} and substitute them in \eqref{eq:defsum(atob)}:
\begin{equation}\label{final-R0T0part1}
\sum_{a<n \leq b} g(x)e^{2\pi i f(n)} =
      R_0(a,b) + \mathcal{O}^*\left(T_0(a,b)\right),
    \end{equation}
    where $R_0$ is defined in \eqref{def-R0ab} and 
\begin{multline}\label{def-T0ab}
        T_0(a,b) 
  =  \frac{g(b) |\tilde{S_1}(b,f'(a))| + g(a) |\tilde{S_1}(a,f'(a))|}{2\pi}   + G(a,b)  
        \\  + \frac{|g'(a)|+2\pi g(a)f'(a)}{2\pi^2 f'(a)}\left(\gamma+ \log(1+f'(a))+\log(\lfloor f'(a) \rfloor+1)-\frac1{2(\lfloor f'(a) \rfloor+1)} -\frac{1}{2(f'(a)+1)}
- \frac{\Gamma'}{\Gamma}(\delta)\right)  .
\end{multline}

\subsubsection{ Proof of Part II. of \thmref{thm-VDC}}

Starting from \eqref{eq:defsum(atob)}, the proof differs from Part I in the estimate for $S_{12}$, and consequently $S_2$ and $T_0$. We rewrite the definition of $I_h(-\nu)$ from \eqref{Ih}:
\begin{equation}
     I_{h}(-\nu)
    =\int_a^b \frac{h(x)}{2\pi i(f'(x)-\nu)} \left(2\pi i(f'(x)-\nu) e^{2\pi i \left(f(x)-\nu x\right)}\right)dx.
\end{equation}
We integrate by parts and obtain:
\begin{multline}
    \bigg| I_h(-\nu) -\frac{1}{2\pi}   \bigg( \frac{h(b) e^{2 \pi i (f(b) - \nu b)}}{f'(b) - \nu} - \frac{h(a) e^{2 \pi i (f(a) - \nu a)}}{f'(a) - \nu}\bigg) \bigg| \leq \frac{1}{2\pi} \bigg|  \int_a^b \frac{h'(x)}{f'(x) - \nu} e^{2\pi i (f(x) - \nu x)} dx \bigg|  \\+ \frac{1}{2\pi} \bigg|  \int_a^b \frac{h(x)f''(x)}{(f'(x) - \nu)^2} e^{2\pi i (f(x) - \nu x)} dx \bigg| \label{bnd1-|I_h(-nu)|}.
\end{multline}
Since $|h'(x)|$ and $|h(x)f''(x)|$ are positive and decreasing for the choices $h=g'$ and $h=gf'$, we analyze the monotonicity of the integrands. Let $G(x)=\frac{h'(x)}{f'(x)-\nu}$ or $G(x)=\frac{h(x)f''(x)}{f'(x)-\nu}$, and $F'(x)=2\pi(f'(x)-\nu)$. The corresponding quotients 
\begin{equation*}
    \frac{|h'(x)|}{2\pi(\nu-f'(x))^2} \quad \text{and} \quad \frac{|h(x)f''(x)|}{2\pi(\nu-f'(x))^3}
\end{equation*}
are strictly monotonic, both achieving their maximum values at $x=a$.
Applying Lemma \ref{Lemma 2 arias}, we obtain the following bounds for the integrals:
\begin{equation}
\label{bnd-int-h1}
     \bigg|  \int_a^b \frac{h'(x)}{f'(x)-\nu} e^{2\pi i \left(f(x)-\nu x\right)}\,dx \bigg|  \leq 2 \bigg| \frac{G(a)}{F'(a)}\bigg|  = \frac{|h'(a)|}{\pi(\nu-f'(a))^2},
\end{equation}
and
\begin{equation}
\label{bnd-int-h2}
     \bigg|  \int_a^b \frac{h(x)f''(x)}{(f'(x)-\nu)^2} e^{2\pi i \left(f(x)-\nu x\right)}\,dx \bigg|  \leq  \frac{|h(a)f''(a)|}{\pi(\nu-f'(a))^3}.
\end{equation}
Substituting the bounds from \eqref{bnd-int-h1} and \eqref{bnd-int-h2} into \eqref{bnd1-|I_h(-nu)|}:
\begin{equation}
    \bigg| I_h(-\nu) -\frac{1}{2\pi} \bigg( \frac{h(b) e^{2 \pi i (f(b) - \nu b)}}{f'(b) - \nu} - \frac{h(a) e^{2 \pi i (f(a) - \nu a)}}{f'(a) - \nu}\bigg)\bigg|  \leq \frac{1}{2\pi^2} \frac{|h'(a)|}{(\nu-f'(a))^2}+ \frac{1}{2\pi^2} \frac{|h(a)f''(a)|}{(\nu-f'(a))^3} .\label{final-bnd-|I_h(-nu)|}
\end{equation}
To bound $S_{12}$ as given in \eqref{eq:defS_12} using \eqref{final-bnd-|I_h(-nu)|}, we sum over $\nu$:
\begin{equation}
\begin{aligned}
\bigg| \sum_{\nu > \lfloor f'(a) \rfloor} \frac{1}{2 \pi \nu} I_h(-\nu)\bigg|  
&\le\bigg|  \frac{h(b) }{4 \pi^2 \nu} \sum_{\nu > \lfloor f'(a) \rfloor}  \frac{e^{2 \pi i (f(b) - \nu b)}}{f'(b) - \nu} \bigg| +\bigg| \frac{h(a) }{4 \pi^2 \nu} \sum_{\nu > \lfloor f'(a) \rfloor}\frac{ e^{2 \pi i (f(a) - \nu a)}}{f'(a) - \nu} \bigg|  \\
& + \frac{|h'(a)|}{4 \pi^3} \sum_{\nu > \lfloor f'(a) \rfloor} \frac{1}{\nu (\nu - f'(a))^2}+ \frac{|h(a) f''(a)|}{4 \pi^3} \sum_{\nu > \lfloor f'(a) \rfloor} \frac{1}{\nu (\nu - f'(a))^3}.
\end{aligned} \label{eq:S12_bound_full}
\end{equation}
We use \eqref{bnd-Sminus} in Lemma \ref{lemma_bnd_S_plus_minus} to bound the following sums:
\begin{align}
\bigg|  \frac{h(b)}{4\pi^2} \sum_{\nu > \lfloor f'(a) \rfloor} \frac{e^{2\pi i (f(b)-\nu b)}}{\nu(f'(b)-\nu)} \bigg|  & \leq \frac{|h(b)|}{4\pi^2 } Z_0( \lfloor f'(a)\rfloor, b, f'(b)) , \label{bnd_sum_h(b)_f(b)} \\
\bigg|  \frac{h(a)}{4\pi^2} \sum_{\nu > \lfloor f'(a) \rfloor} \frac{e^{2\pi i (f(a)-\nu a)}}{\nu(f'(a)-\nu)} \bigg|  
&\leq \frac{|h(a)|}{4\pi^2} Z_0(\lfloor f'(a) \rfloor, a, f'(a)). \label{bnd_sum_h(a)_f(a)}
\end{align}
Substituting the estimates for sums \eqref{bnd-sum-nu(nu-y)^2bupper} and \eqref{bnd-sum-nu(nu-y)^3bupper} in Lemma \ref{lem:harmonic-lead} into the remaining terms of \eqref{eq:S12_bound_full}, we obtain the following bounds for the error sums over $\nu > \lfloor f'(a) \rfloor$
\begin{multline}
  \frac{|h'(a)|}{4\pi^3 }\sum_{ \nu  > \lfloor f'(a) \rfloor} \frac{1}{\nu(\nu-f'(a))^2} 
\le
  \frac{|h'(a)|}{4\pi^3 f'(a)}  \left[ \left(\frac1{\delta^2}+\frac1{(\delta+1)^2}+ \frac1{\delta+1}  \right)\right. \\
   \left. -\frac1{f'(a) }  \left( \log(\lfloor f'(a) \rfloor+1)-\frac1{\lfloor f'(a) \rfloor+1} - \frac{\Gamma'}{\Gamma}(\delta)\right)\right], \label{error_sum_(nu-f'(a))^2}
\end{multline}
and
\begin{multline}
  \frac{|h(a)f''(a)|}{4\pi^3 }\sum_{ \nu  > \lfloor f'(a) \rfloor} \frac{1}{\nu(\nu-f'(a))^3} 
\leq  \frac{|h(a)f''(a)|}{4\pi^3 f'(a)}
\left[ 
\frac1{\delta^3}+\frac1{(\delta+1)^3}+\frac1{2(\delta+1)^2} \right.\\
- \frac1{f'(a)}\left( \frac1{\delta^2} + \frac1{\delta+1} \right) 
\left. +\frac1{f'(a)^2}\left( \log(\lfloor f'(a) \rfloor+1)-\frac1{2(\lfloor f'(a) \rfloor+1)} - \frac{\Gamma'}{\Gamma}(\delta) \right)\right] . \label{error_sum_(nu-f'(a)^)3}
\end{multline}
Representing the components of $S_{12}$ from \eqref{eq:defS_12} in terms of the general integral $I_h(-\nu)$ for $h=g'$ and $h=gf'$, we obtain  from \eqref{bnd_sum_h(b)_f(b)}, \eqref{bnd_sum_h(a)_f(a)}, \eqref{error_sum_(nu-f'(a))^2}, \eqref{error_sum_(nu-f'(a)^)3}:
\begin{multline}
\label{bnd-S12-final}
|S_{12}| 
\leq  \frac{H(b)}{4\pi^2 } Z_0(\lfloor f'(a) \rfloor, b , f'(b)) + \frac{H(a)}{4\pi^2 } Z_0(\lfloor f'(a) \rfloor, a, f'(a))
\\ + \frac{H_1(a)}{4\pi^3 f'(a)} \Big[ \Big( \frac{1}{\delta^2} + \frac{1}{(\delta+1)^2} + \frac{1}{\delta+1}\Big) 
- \frac{1}{f'(a)} \Big( \log(\lfloor f'(a) \rfloor+1) - \frac{1}{\lfloor f'(a) \rfloor+1} - \frac{\Gamma'}{\Gamma}(\delta) \Big) \Big]
\\  +  \frac{H(a)|f''(a)|}{4\pi^3 f'(a)} \Big[ \frac{1}{\delta^3} + \frac{1}{(\delta+1)^3} + \frac{1}{2(\delta+1)^2} - \frac{1}{f'(a)} \Big( \frac{1}{\delta^2} + \frac{1}{\delta+1} \Big) 
\Big. \\  \Big. 
+ \frac{1}{f'(a)^2} \Big( \log(\lfloor f'(a) \rfloor+1) - \frac{1}{2(\lfloor f'(a) \rfloor+1)} - \frac{\Gamma'}{\Gamma}(\delta) \Big) \Big] , 
\end{multline}
where \begin{equation}
    H(x) = |g'(x)| + 2\pi|g(x)f'(x)|, \label{eq:defH(x)}
\end{equation} and \begin{equation}
    H_1(x)=|g''(x)|+2\pi( |g(x)f''(x)|+|g'(x)f'(x)|). \label{eq:defH_1(x)}
\end{equation}
\smallskip

In a similar manner, to establish the bound for the sum $S_2$ in \eqref{sum_S_2_Ig'_I_(gf')}, we first derive an explicit expression for the general integral $I_h(+\nu)$ involving positive frequencies. By applying integration by parts to the oscillatory factor as done for $I_h
(+\nu)$, we obtain:
\begin{multline}
    \bigg| I_h(+\nu)-\frac{1}{2\pi}\left( \frac{h(b) e^{2 \pi i (f(b) + \nu b)}}{f'(b) + \nu} - \frac{h(a) e^{2 \pi i (f(a) +\nu a)}}{f'(a) + \nu}\right) \bigg|  
     \\ \leq \frac{1}{2\pi} \bigg|  \int_a^b \frac{h'(x)}{f'(x) + \nu} e^{2\pi i (f(x) + \nu x)} dx \bigg|   + \frac{1}{2\pi} \bigg|  \int_a^b \frac{h(x)f''(x)}{(f'(x) + \nu)^2} e^{2\pi i (f(x) + \nu x)} dx \bigg| . \label{bnd1-|I_h(+nu)|} 
\end{multline}
Under the assumptions that $h$ and $|f''|$ are decreasing, and since $f'(x) + \nu$ is positive and decreasing for $\nu  \geq 1$, the quotients 
$$\frac{|h'(x)|}{(2\pi)^2 (f'(x) + \nu)^2} \quad \text{and} \quad  \frac{|h(x)f''(x)|}{(2\pi)^2 (f'(x) + \nu)^3} $$
are monotonic. Applying Lemma \ref{Lemma 2 arias}, we obtain the following bounds for the remaining integrals in \eqref{bnd1-|I_h(+nu)|}, we obtain:
\begin{equation}
    \frac{1}{2\pi} \bigg|  \int_a^b \frac{h'(x)}{f'(x) + \nu} e^{2\pi i (f(x) + \nu x)} dx \bigg|  \leq \frac{|h'(a)|}{2\pi^2 (f'(a) + \nu)^2}, \label{bnd-int-plus-h1}
\end{equation}
and
\begin{equation}
    \frac{1}{2\pi} \bigg|  \int_a^b \frac{h(x)f''(x)}{(f'(x) + \nu)^2} e^{2\pi i (f(x) + \nu x)} dx \bigg|  \leq \frac{|h(a)f''(a)|}{2\pi^2 (f'(a) + \nu)^3}. \label{bnd-int-plus-h2}
\end{equation}
Combining the bounds from \eqref{bnd-int-plus-h1} and \eqref{bnd-int-plus-h2} to \eqref{bnd1-|I_h(+nu)|}, we have:
 \begin{multline}\label{eq:S2_bound_full}
\bigg| \sum_{\nu=1}^{\infty} \frac{1}{2 \pi \nu} I_h(+\nu)\bigg| 
\leq 
\frac{|h(b)|}{4\pi^2} \bigg|  \sum_{\nu=1}^\infty \frac{e^{2\pi i \nu b}}{\nu(f'(b)+\nu)} \bigg|  + \frac{|h(a)|}{4\pi^2} \bigg|  \sum_{\nu=1}^\infty \frac{e^{2\pi i \nu a}}{\nu(f'(a)+\nu)} \bigg| \\
+ \frac{|h'(a)|}{4 \pi^3} \sum_{\nu=1}^{\infty} \frac{1}{\nu (f'(a) + \nu)^2} + \frac{|h(a) f''(a)|}{4 \pi^3}.
    \end{multline}
To estimate the boundary terms in \eqref{eq:S2_bound_full}, we invoke the bound \eqref{bnd-Splus} for $Z_1(x, y)$ from Lemma \ref{lemma_bnd_S_plus_minus}, yielding the following:
\begin{equation}
    \bigg| \frac{h(b)}{4\pi^2} \sum_{\nu=1}^\infty \frac{e^{2\pi i \nu b}}{\nu(f'(b)+\nu)} \bigg|  \leq \frac{|h(b)|}{4\pi^2 }Z_1(b,f'(b)), \label{eq:S2_bound_b}
\end{equation}
and
\begin{equation}
    \bigg| \frac{h(a)}{4\pi^2} \sum_{\nu=1}^\infty \frac{e^{2\pi i \nu a}}{\nu(f'(a)+\nu)} \bigg|  \leq \frac{|h(a)|}{4\pi^2 }Z_1(a,f'(a)). \label{eq:S2_bound_a}
\end{equation}
Next, we address the residual sums in \eqref{eq:S2_bound_full} using the bounds from \eqref{eq:sum_infty_power_k} in Lemma \ref{lem:harmonic-lead}:
\begin{equation}
\frac{|h'(a)|}{4\pi^3} \sum_{\nu=1}^\infty \frac{1}{\nu(\nu+f'(a))^2} \leq \frac{|h'(a)|}{4\pi^3} \left[ \frac{\log(f'(a)+1)+\gamma}{f'(a)^2} - \frac{1+2f'(a)}{2f'(a)^2(1+f'(a))} \right], 
\label{eq:S2_bound_+nu_power2_refined}
\end{equation}
and
\begin{equation}
\frac{|h(a)f''(a)|}{4\pi^3} \sum_{\nu=1}^\infty \frac{1}{\nu(\nu+f'(a))^3} \leq \frac{|h(a)f''(a)|}{4\pi^3} \left[ \frac{\log(f'(a)+1)+\gamma}{f'(a)^3} - \frac{3f'(a)+3f'(a)^2+1}{2f'(a)^3(1+f'(a))^2} \right]. 
\label{eq:S2_bound_+nu_power3_refined}
\end{equation}

\smallskip
This allows us to get the final estimate for $|S_2|$ as defined in \eqref{sum_S_2_Ig'_I_(gf')} from from \eqref{eq:S2_bound_b}, \eqref{eq:S2_bound_a}, \eqref{eq:S2_bound_+nu_power2_refined} and \eqref{eq:S2_bound_+nu_power3_refined} for $h=g'$ and $h=gf'$:
\begin{equation}
\label{bnd-S2-final}
\begin{split}
    |S_2| \leq
    & \frac{H(b)}{4\pi^2 }Z_1(b,f'(b))+\frac{H(a)}{4\pi^2 }Z_1(a,f'(a))
    + \frac{H_1(a)}{4\pi^3} \left[ \frac{\log(f'(a)+1)+\gamma}{f'(a)^2} - \frac{1+2f'(a)}{2f'(a)^2(1+f'(a))} \right]
    \\& + \frac{H(a)|f''(a)|}{4\pi^3} \left[ \frac{\log(f'(a)+1)+\gamma}{f'(a)^3} - \frac{3f'(a)+3f'(a)^2+1}{2f'(a)^3(1+f'(a))^2} \right], 
\end{split}
\end{equation}
where $H(x)$ and $H_1(x)$ are defined in \eqref{eq:defH(x)} and \eqref{eq:defH_1(x)}, respectively.
We conclude by substituting the bounds \eqref{bnd-S12-final} and \eqref{bnd-S2-final} for, respectively, $S_{12}$ and $S_{2}$, into \eqref{eq:defsum(atob)}:
\begin{multline}
    \bigg| \sum_{a<n \leq b} g(n) e^{2\pi i f(n)} - R_0(a,b)\bigg| \leq G(a,b)
    + \frac{1}{2\pi} \left( g(b) |\tilde{S}_1(b, \lfloor f'(a) \rfloor)| + g(a)|\tilde{S}_1(a, \lfloor f'(a) \rfloor)| \right) \\
+\frac{H(b)\mathcal{B}_{f'}(b) + H(a)\mathcal{B}_{f'}(a)}{4\pi^2}
    + \frac{H_1(a)}{4\pi^3f'(a)} \mathcal{E}_1(\delta, f'(a))+\frac{H(a)|f''(a)|}{4\pi^3f'(a)} \mathcal{E}_2(\delta, f'(a)), \label{Final_Explicit_Bound}
\end{multline}
where
\begin{align}
    \mathcal{B}_{f'}(x) &= Z_0(\lfloor f'(a) \rfloor, x, f'(x)) + Z_1(x, f'(x)), \label{def-B} \\
    \mathcal{E}_1(\delta, f'(a)) &= \frac{1}{\delta^2} + \frac{1}{(\delta+1)^2} + \frac{1}{\delta+1} \nonumber \\& - \frac{1}{f'(a)} \left( \log(\lfloor f'(a) \rfloor+1) - \frac{1}{\lfloor f'(a) \rfloor+1} - \frac{\Gamma'}{\Gamma}(\delta)  + \log(f'(a)+1)+\gamma- \frac{1+2f'(a)}{2(1+f'(a))}\right) , \label{auxilliary_error1} \\
    \mathcal{E}_2(\delta, f'(a)) &= \frac{1}{\delta^3} + \frac{1}{(\delta+1)^3} + \frac{1}{2(\delta+1)^2} - \frac{1}{f'(a)} \left( \frac{1}{\delta^2} + \frac{1}{\delta+1} \right) \nonumber \\  & + \frac{1}{f'(a)^2}\left(\log(\lfloor f'(a) \rfloor+1) - \frac{1}{2(\lfloor f'(a) \rfloor+1)} - \frac{\Gamma'}{\Gamma}(\delta)\right. \nonumber\\
    &\qquad \qquad\left.+\log(f'(a)+1)+\gamma- \frac{3f'(a)+3f'(a)^2+1}{2(1+f'(a))^2}\right), \label{auxillary_error2}
\end{align}
and $Z_0, Z_1$ are defined in Lemma \ref{lemma_bnd_S_plus_minus}.
The general result follows from replacing ${f'(a)}$ with ${f'(a)}-N$.
We conclude to \eqref{final-R0T0part1} with the alternate definition for $T_0$ as given in \eqref{def-TNab-partII}.
\end{proof}
\subsection{Proof of Corollary \ref{Explicit_B_estimate}}\label{section-proof-cor-Explicit_B_estimate}
\begin{proof}
Using \corref{cor-VDC}, we replace the $\frac{3}{\pi} \log(\beta-\alpha+2) + 4.0$ term found in \cite[Lemma 2.1]{PatelYang2024} with our explicit remainder terms.
For each integer frequency $0 \le \nu \le \lfloor f'(a) \rfloor$, let $x_\nu \in [a, b]$ satisfy $f'(x_\nu) = \nu$. We evaluate each integral in the frequency sum using stationary phase approximation \cite[Lemma 2.2]{PatelYang2024}. Summing these evaluations, we partition the remaining error into $S_1, S_2,$ and $S_3$ as in \cite[eq.~(2.32)]{PatelYang2024} to obtain:
\begin{equation}
\sum_{\nu = 1}^{\lfloor f'(a) \rfloor - 1} \int_{a}^{b} e^{2\pi i (f(x) - \nu x)} \, dx =S_1 + S_2 + S_3.
\end{equation}
Here, we have 
\begin{equation}\label{eq:bs1}
    S_1 = \sum_{\nu=1}^{\floor{f'(a)}-1} \frac{e^{2\pi i (f(x_\nu)-\nu x_\nu -1/8)}}{|f''(x_\nu)|^{1/2}},
\end{equation}
 $S_2$ is bounded as in \cite[eq.~(2.34)]{PatelYang2024}, and $S_3$ satisfies:
\begin{align} |S_3| &\le  \frac{2}{\pi} \log\!\big(f'(a) - f'(b)\big) + 1.251. \label{eq:S3-bound}
\end{align}
The constant $1.251$ in \eqref{eq:S3-bound} is obtained by explicitly evaluating the Digamma identity $-\frac{2}{\pi}\psi(1/2) = \frac{2}{\pi}(\gamma + 2\log 2) \approx 1.250009$, which refines the bound utilized in \cite[eq.~(2.35)]{PatelYang2024}. Note that our equation \eqref{eq:bs1} can be compared with \cite[eq.~(2.33)]{PatelYang2024}. Because of the way we state our Corollaries \ref{cor-VDC} and \ref{Explicit_B_estimate} with specific integer limits in the sums, we do not need to consider the extra terms in the boundary in the same way as Patel and Yang. The tradeoff for this is our parameter $\delta$ in our error terms. To obtain Patel and Yang's \cite[eq.~(2.34)]{PatelYang2024}, in our case, we use the following inequality:
\[
\floor{f'(a)}-1 \le f'(a)-f'(b) \le (b-a)h_2\lambda_2,
\]
since we are considering (for simplicity) the case $0<f'(b)<1$.
\smallskip

Following \cite[eq.(2.36)]{PatelYang2024}, extending the interior integral sum over $1 \le \nu \le \lfloor f'(a) \rfloor - 1$ to the full frequency range $0 \le \nu \le \lfloor f'(a) \rfloor$ involves adding the two boundary frequencies $\nu = 0$ and $\nu = \lfloor f'(a) \rfloor$. Applying Kershner's second-derivative estimate \cite[eq.(2.2)]{PatelYang2024} to each endpoint integral yields:
\begin{equation}\label{eq:boundary-integrals}
\sum_{\nu = 1}^{\lfloor f'(a) \rfloor - 1} \int_{a}^{b} e^{2\pi i (f(x) - \nu x)} \, dx = \sum_{\nu = 0}^{\lfloor f'(a) \rfloor} \int_{a}^{b} e^{2\pi i (f(x) - \nu x)} \, dx + \mathcal{O}^*\!\left( \frac{2.686}{\sqrt{\lambda_2}} \right).
\end{equation}
Combining \corref{cor-VDC} with the boundary integral relation \eqref{eq:boundary-integrals}, the bounds on $S_1$ and $S_2$ from \cite{PatelYang2024}, and the refined estimate \eqref{eq:S3-bound} completes the proof.
\end{proof}
\section{A 1st explicit approximate functional equation} \label{section-proof-AFE1}
The Approximate Functional Equation of the First Kind (AFE1) was established by Hardy and Littlewood in their 1921 memoir \cite{HL1921} (received in 1920). Specifically, \cite[Lemma 2]{HL1921} provides an explicit bound for the error term under the condition $|t| < 2\pi x / C$ for $C > 1$. The result states that for $\sigma \ge \sigma_0 > 0$ and $|s - 1| \ge \delta > 0$,
\begin{equation}
    \zeta(s) = \sum_{n \le x} n^{-s} - \frac{x^{1-s}}{1-s} + O(x^{-\sigma})
\end{equation}
holds uniformly in $s$ \cite[Lemma 2]{HL1921}. This fundamental result is presented as \cite[Theorem 4.11]{Titchmarsh}.

\subsection{Statement and proof of main result}
In this section, we utilize our optimized explicit van der Corput B-process (\thmref{thm-VDC}) to provide a further sharpened version of the AFE1. We establish the following theorem, which offers significantly tighter constants.
\begin{theorem} \label{thm-AFE1}
Let $s = \sigma + it$ with $\s \in (0,1]$ and $t \geq t_0>0$. If $c>\frac{1}{2\pi}$ and $(ct)\in\Z+\frac{1}{2}$, then
\begin{equation*} \bigg|  \zeta(s)
    - \sum_{1<n\leq ct}n^{-s} \bigg|  
    \leq m (c) (ct)^{-\s}  ,
\end{equation*}
with 
\begin{equation}
\label{def-m-AEF1}
m (c) = c + \frac{1}{\pi }  \left( \frac{1}{t_0} + 1 \right) \left(  \log \left(1+\frac{1}{2\pi c} \right) + \gamma - \frac{\Gamma'}{\Gamma}\left(1-\frac{1}{2\pi c}\right)  - \frac{1}{2(1+\frac{1}{2\pi c})}-\frac{1}{2}\right).
\end{equation}
For instance,
\begin{align}
& m(1) \approx 1.22773 \ \text{for} \ t_0=14.13473,    \\
&m(1) \approx 1.21268 \ \text{for} \  t_0=3\cdot10^{12}.
\end{align}
\end{theorem}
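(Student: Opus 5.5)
We plan to prove Theorem~\ref{thm-AFE1} by combining the classical representation of $\zeta(s)$ for $\sigma>0$ with Part~I of \thmref{thm-VDC} on a tail $(x,X]$, with $x=ct$, and then letting $X\to\infty$. For $\sigma>0$ we start from
\[
\zeta(s)=\sum_{n\le x} n^{-s} + \sum_{x<n\le X} n^{-s} + \Big(\zeta(s)-\sum_{n\le X}n^{-s}\Big),
\]
where the last term equals $\frac{X^{1-s}}{s-1}+\O(X^{-\sigma})$ by Euler--Maclaurin. It is therefore enough to understand $\sum_{x<n\le X} n^{-s}$ uniformly in $X$.

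We apply \thmref{thm-VDC} Part~I with $g(u)=u^{-\sigma}$, $f(u)=-\frac{t}{2\pi}\log u$ and $a=x$, $b=X$, both taken in $\Z+\frac12$. The sign convention needs care: $f'$ is negative here. We therefore take complex conjugates, so that the phase is $\frac{t}{2\pi}\log u$ with positive, strictly decreasing derivative $f'(u)=\frac{t}{2\pi u}$. Then $f'(a)=\frac{1}{2\pi c}<1$, since $c>\frac1{2\pi}$. This gives $\lfloor f'(a)\rfloor=0$ and $\delta=1-\frac{1}{2\pi c}$, and $R_0$ reduces to the single integral $\int_x^X u^{-s}\,du=\frac{X^{1-s}-x^{1-s}}{1-s}$.

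The $X^{1-s}$ pieces then cancel against the Euler--Maclaurin tail. We are left with
\[
\zeta(s)-\sum_{n\le x}n^{-s} = -\frac{x^{1-s}}{1-s}+\O^*\big(T_0(x,\infty)\big).
\]
The term $|x^{1-s}/(1-s)|\le x^{1-\sigma}/t = c\,x^{-\sigma}$ produces the leading $c$ in $m(c)$; if the stated sum starting at $1<n$ is meant as $1\le n$, this is harmless.

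Next we evaluate $T_0$ from \eqref{def-TNab-integer+1/2} with $N=0$. The factor in front is
\[
\frac{|g'(a)|+2\pi g(a)f'(a)}{2\pi^2 f'(a)}=\frac{\sigma x^{-\sigma-1}+ t x^{-\sigma-1}}{\pi t x^{-1}}=\frac{x^{-\sigma}}{\pi}\Big(1+\frac{\sigma}{t}\Big)\le \frac{x^{-\sigma}}{\pi}\Big(1+\frac1{t_0}\Big),
\]
using $\sigma\le1$. The bracket, with $\lfloor f'(a)\rfloor=0$, becomes
\[
\log\Big(1+\tfrac{1}{2\pi c}\Big)+\gamma-\tfrac{\Gamma'}{\Gamma}(\delta)-\tfrac12-\tfrac{1}{2(1+\frac1{2\pi c})},
\]
which matches \eqref{def-m-AEF1} exactly.

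The remaining boundary term in $T_0$ is $\frac{g(a)+g(b)}{2\pi}\big(\log 2+\frac1{f'(a)}\big)$. It does not appear in $m(c)$, so the main obstacle is to show that it vanishes. Here $\lfloor f'(a)\rfloor=0$, so $\Sigma_{11}$ in \eqref{def-Sigma11} is an empty sum and $S_1(x,0)=0$ by Lemma~\ref{lem:geometric}. In the proof of Part~I the quantity $\tilde S_1$ is really evaluated at $\lfloor f'(a)\rfloor$, and that is zero. Likewise $G(a,b)=0$ because $a,b\in\Z+\frac12$, and $g(b)\to0$ as $X\to\infty$. Since the $X$-dependent error terms ($X^{-\sigma}$ and $g(X)$ contributions) tend to $0$, letting $X\to\infty$ gives the bound $m(c)\,x^{-\sigma}$. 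Finally, we evaluate $m(1)$ numerically at the two given $t_0$, using $\frac{\Gamma'}{\Gamma}(1-\frac1{2\pi})$.
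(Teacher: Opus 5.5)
Your proposal is correct and follows the paper's proof step for step: the Euler--Maclaurin tail at a half-integer cutoff, Part~I of \thmref{thm-VDC} applied to the conjugated sum with $N=0$ and $\lfloor f'(a)\rfloor=0$, cancellation of the cutoff terms $X^{1-s}$, the prefactor $\frac{x^{-\sigma}}{\pi}\big(1+\frac{\sigma}{t}\big)$, and the bound $|x^{1-s}/(s-1)|\le c\,x^{-\sigma}$. The only difference is cosmetic: the paper removes the boundary terms through the general form \eqref{def-TNab}, where $\tilde{S_1}(\cdot,f'(a))=0$ because $f'(a)<1$ (and $G=0$), whereas you return to the proof to see that $\Sigma_{11}$ is an empty sum; you are also right that the simplified form \eqref{def-TNab-integer+1/2} cannot be used literally here.
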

\begin{proof}
Let $x,M\in \mathbb{Z}+\frac{1}{2}$ and $1<x<M$. 
A classical Euler-Maclaurin summation formula leads to
\begin{equation} 
\zeta(s) - \sum_{1 \leq n \leq x} \frac{1}{n^s} = \sum_{x < n \leq M} \frac{1}{n^s} + s \int_M^\infty \frac{((u))}{u^{s+1}} \,du - \frac{M^{1-s}}{1-s} - \frac{1}{2}M^{-s}, \label{5.2}
\end{equation}
where $((u)) = [u] - u + 1/2$. 
Together with the bounds
    \begin{equation}\label{5.3}  \bigg| \frac{((M))}{M^s}\bigg|  \leq \frac{1/2}{M^\sigma} \ \text{and}\ 
    \bigg| s \int_M^\infty \frac{((u))}{u^{s+1}} \,du \bigg|  \leq 
    \frac{|s|}{2\sigma M^\sigma}, 
\end{equation} 
we obtain the approximation 
\begin{equation}
    \zeta(s)-\sum_{1\leq n\leq x}n^{-s}=\sum_{x <n\leq M}n^{-s}+\frac{M^{1-s}}{s-1}+\mathcal{O^*}\left(\frac{1}{2M^{\s}}+\frac{|s|}{2\s M^{\s}}\right) .\label{Euler-Zeta-EF1}
\end{equation}
We apply \thmref{thm-VDC} to the sum 
$\overline{\sum_{x<n\leq M}n^{-s}}=\sum_{x<n\leq M}n^{-\s}e^{it \log n} $ 
with $a=x, b=M, N=0, g(y)=y^{-\s}, g'(y) = -\s y^{-\s-1}$, $ f(y)=\frac{t}{2\pi}\log y$, and $f'(y)=\frac{t}{2\pi y}$. Hence 
\begin{equation} \label{sum-xtoM-Euler-Zeta-EF1}
  \sum_{x<n\leq M}n^{-s} 
  = \overline{R_0(x,M)}+\mathcal{O^*}(T_0(x,M)),
\end{equation}
where $R_0$ and $T_0$ are respectively defined in \eqref{def-RNab} and \eqref{def-TNab}. 
First, 
\begin{equation}
\label{R0-sum-xtoM-Euler-Zeta-EF1}
   \overline{R_0(x,M)} 
   = \int_{x}^M u^{-s}
\,du = \frac{x^{1-s}-M^{1-s}}{s-1} .  \end{equation} 
Putting together \eqref{Euler-Zeta-EF1} with \eqref{sum-xtoM-Euler-Zeta-EF1}, and \eqref{R0-sum-xtoM-Euler-Zeta-EF1}, the $M^{1-s}$-terms cancel out, and we get
\begin{equation}\label{bnd1-Zeta-EF1}
    \zeta(s) - 
    \sum_{1 \leq n\leq x}n^{-s}
     = \frac{x^{1-s} 
    }{s-1}
    + \mathcal{O^*}\left(\frac{1}{2M^{\s}}+\frac{|s|}{2\s M^{\s}}\right)
    +\mathcal{O^*}(T_0(x,M)) .
\end{equation}
Since, for $y \in [x,M]$, the assumption $x>\frac{t}{2\pi}$ ensures 
$ 0<f'(ct)= \frac{1}{2 \pi c }<1$, then the definition \eqref{bnd-Zxy} gives $\tilde{S_1}(u,f'(ct))=0$ for $u=ct$ or $M$. In addition, the assumption for $M$ and $ct$ to be in $\mathbb{Z}+\frac{1}{2}$ ensures $G(ct, M) =0$. This simplifies the expression \eqref{def-TNab}:
\begin{align*}
T_0(ct,M) & =  
\frac{ \s (ct)^{-\s-1} + 2\pi (ct)^{-\s} \left(\frac1{2\pi c} \right)}{2\pi^2 \left(\frac1{2\pi c}\right)}\left(\log\left(1+\frac1{2\pi c}\right) + \gamma - \frac{\Gamma'}{\Gamma}\left(1-\frac1{2\pi c} \right)-\frac{1}{2(1+\frac{1}{2\pi c})}-\frac{1}{2}\right) 
\\ & \leq \frac{(ct)^{-\s}}{\pi } \left( \frac{\s}{t_0} + 1 \right) \left( \log (1+\frac{1}{2\pi c} ) + \gamma - \frac{\Gamma'}{\Gamma}\left(1-\frac{1}{2\pi c}\right)-\frac{1}{2(1+\frac{1}{2\pi c})}-\frac{1}{2}\right). 
\end{align*}
Together with \eqref{bnd1-Zeta-EF1}, while letting $M\to\infty$, this gives 
 \[
\bigg|\zeta(s) - \sum_{1<n\leq ct}n^{-s} \bigg|
\leq  \bigg|\frac{(ct)^{1-s}}{s-1} \bigg|
 + \frac{(ct)^{-\s}}{\pi } 
\Big( \frac{\s}{t_0} + 1 \Big) \bigg( \log \Big(1+\frac{1}{2\pi c} \Big) + \gamma - \frac{\Gamma'}{\Gamma} \Big(1-\frac{1}{2\pi c}\Big) - \frac{1}{2(1+\frac{1}{2\pi c})}-\frac{1}{2}\bigg) ,
\]
which leads to the announced bound.
    \end{proof}
\subsection{Proof of \corref{cor:all_t}}\label{sec:all_t}
\begin{proof}
For $t\in \Z+\frac{1}{2}$, with $t \geq t_0$, the bound \eqref{def-m-AEF1} with $c=1$ becomes
\[
 m(1) =  1 
    + \frac{1}{\pi }  \Big( \frac{1}{t_0} + 1 \Big) \bigg(  \log \Big(1+\frac{1}{2\pi } \Big) + \gamma - \frac{\Gamma'}{\Gamma}\Big(1-\frac{1}{2\pi }\Big)-\frac{1}{2(1+\frac{1}{2\pi })}-\frac{1}{2}\bigg).   
    \]

\textit{Case 1.} 
First, consider the case where $t-\floor{t}\in[0,\frac{1}{2}]$. Then, we can write $N\le t \le N+\frac{1}{2}$, for some integer $N$ with $N\ge \floor{t_0}  \ge 14$. Let $t_2=N+\frac{1}{2} \geq t$. Define $c\in[1,14.5/14]$ by $t_2=ct$. Then, we have 
\begin{equation*} \bigg| \zeta(s)
    - \sum_{1\le n\leq t}n^{-s} \bigg| = \bigg| \zeta(s)
    - \sum_{1\le n\leq t_2}n^{-s} \bigg| 
    \leq m (c) \,  t_2^{-\s} \leq  m (c) \, t^{-\s}.
\end{equation*}
Below, we show that $m(c)$ is increasing for $c$ on the interval $[1,14.5/14]$. Therefore, if $N=\floor{t_0}$, for any $t$ such that $N\le t \le N+\frac{1}{2}$, we obtain the inequality \eqref{eq:AF1cor} with constant 
\[ c_0=m(c) =m\left(\frac{\floor{t_0}+\frac{1}{2}}{t}\right) \le m\left(\frac{\floor{t_0}+\frac{1}{2}}{t_0}\right).\]
Similarly, if $N\ge \floor{t_0}+1$, for any $t$ such that $N\le t \le N+\frac{1}{2}$, we obtain the inequality \eqref{eq:AF1cor} with constant $c_0=m\left(\frac{N+\frac{1}{2}}{N}\right)$. Since $m(c)$ is increasing, this takes its maximum value at $N=\floor{t_0}+1$.
Combining both estimates, for any $t$ such that $N\le t \le N+\frac{1}{2}$ and $N\ge \floor{t_0}\ge 14$, we obtain 
\begin{equation}\label{eq:c0-1}
  c_0=\max\left( m\left(\frac{\floor{t_0}+1/2}{t_0}\right), 
  m\left(\frac{\floor{t_0}+3/2}{\floor{t_0}+1}\right) \right).  
\end{equation} 

\textit{Case 2.} 
Now, consider the case where $t-\floor{t}\in(\frac{1}{2},1)$. Then, we can write $N+\frac{1}{2}<t<N+1$ for some fixed integer $N \geq 14$. Let $f=t-N-\frac{1}{2}$, so that $0<f<\frac{1}{2}$, and let $c = 1-\frac{f}{t}$. Note that $ct=t-f=N+\frac{1}{2}\in\Z+\frac{1}{2}$. Applying Theorem \ref{thm-AFE1} with $x=ct$, and since $\floor{ct}=\floor{t}=N$, we obtain 
\[
\bigg|  \zeta(s)
    - \sum_{1\le \leq t}n^{-s} \bigg|  = \bigg|  \zeta(s)
    - \sum_{1\le n\leq ct}n^{-s} \bigg|  
     \leq A(f) t^{-\s},
\]
where \[
A(f)=m \Big(1-\frac{f}{t}\Big)\Big(1-\frac{f}{t}\Big)^{-\sigma} \leq m \Big(1-\frac{f}{t}\Big)\Big(1-\frac{f}{t}\Big)^{-1},\] and $m \Big(1-\frac{f}{t}\Big)$ is defined in \eqref{def-m-AEF1} (we may take $t_0=14.5$). 
We claim that $A(f)$ is an increasing function of $f\in(0,\frac{1}{2})$. Indeed, consider $c=1-\frac{f}{N+\frac{1}{2}+f}$ as a decreasing function of $f$, and note that $A(f)=B(c) = \frac{m (c)}{c}$. We take derivatives to show that $B$ is a decreasing function of $c$ (and therefore $A$ is an increasing function of $f$). Note that, for $N\ge14$ we have $1 \geq c \geq 1-\frac{1}{30}$, and we restrict to this domain. 
As
\begin{align*}
    B'(c)=\frac{m '(c)\cdot c-m (c)}{c^2},
\end{align*}
it therefore suffices to show that 
\[
m '(c)\cdot c < m (c).
\]
Note that, if $y=1+\frac{1}{2\pi c}$, then $y'=-\frac{1}{2\pi c^2}<0$, and $\frac{y'}{y}= -\frac{1}{c(2\pi c+1)}$. From \eqref{def-m-AEF1}, we find
\begin{equation*}
m '(c)c = c + \frac{1}{\pi }  \left( \frac{1}{t_0} + 1 \right) \left( \frac{-1}{2\pi c+1} - \psi_1\left(1-\frac{1}{2\pi c}\right)\cdot \frac{1}{2\pi c} -\frac{\pi c}{(2\pi c+1)^2}\right). 
\end{equation*}
Here, $\psi_1(x)$ is the trigamma function - the second derivative of $\log \Gamma(x)$. 
By using interval arithmetic,\footnote{We use the MPFI library \cite{mpfi} on Sagemath 10.8.} we rigorously verify the following inequality numerically on the interval $c\in [1-\frac{1}{30},1]$:
\[
     \log \left(1+\frac{1}{2\pi c} \right) + \gamma - \frac{\Gamma'}{\Gamma}\left(1-\frac{1}{2\pi c}\right)-\frac{1}{2(1+\frac{1}{2\pi c})}-\frac{1}{2} > 
      \frac{-1}{2\pi c+1} - \psi_1\left(1-\frac{1}{2\pi c}\right)\cdot \frac{1}{2\pi c} -\frac{\pi c}{(2\pi c+1)^2}.
\]
It follows that $m '(c)\cdot c < m (c)$, and therefore $B(c)$ is decreasing with $c$ in this interval, for any fixed value of $t_0>0$. We similarly verify that $m'(c)>0$ for $c\in[1,\frac{14.5}{14}]$ with interval arithmetic, and therefore $m(c)$ is increasing in this interval.
We conclude
\[
A(f)\leq A\left(\frac{1}{2}\right)=B\left(1-\frac{1}{2N+2}\right)
\]
since $N\geq 14$.
Since $B(c)$ is decreasing in the interval $c\in [1-\frac{1}{2(N+1)},1]\subset [1-\frac{1}{30},1]$ we obtain the inequality \eqref{eq:AF1cor} with constant 
\begin{equation}
    c_0= m \left(1-\frac{1}{2(N+1)}\right)/\left(1-\frac{1}{2(N+1)}\right). 
\end{equation}
The maximum value occurs at $N=\floor{t_0}.$ We combine this with \eqref{eq:c0-1} to obtain the following constant, valid for any real $t\ge t_0\ge14$: 
\begin{equation}\label{eq:def-c0}
  c_0=\max\left( m\left(\frac{N+1/2}{t_0}\right), \, 
  m\left(\frac{N+3/2}{N+1}\right), \, 
  m \left(1-\frac{1}{2(N+1)}\right)/\left(1-\frac{1}{2(N+1)}\right)
  \right),  
\end{equation}
where $N=\floor{t_0}$ and $m$ is defined in \eqref{def-m-AEF1}. 
We conclude to the announced values for $c_0$ taking $t_0=14.13473$ and $t_0=3\cdot 10^{12}$, respectively.
\end{proof}
\section{An explicit 2nd approximate functional equation}\label{section-AFE2}
In 1921-1923, Hardy and Littlewood established the approximate functional equation in two forms. 
Lemma 15 of \cite{HL1921} states that for $0 < \sigma < 1$ and $2\pi xy = |t|$,
\begin{equation}
    \zeta(s) = \sum_{n \le x} n^{-s} + \chi(s) \sum_{n \le y} n^{s-1} + O(x^{-\sigma} \log |t|) + O(|t|^{1/2-\sigma} y^{\sigma-1} \log |t|),
\end{equation}
a version they described ``imperfect" , but ``which follows more naturally from our previous analysis and is sufficient for our immediate purpose" (namely, bounding zeta inside the critical strip 
and counting its zeros on the $1/2$-line.)
The proof truncates the Dirichlet series and its functional-equation dual, and bounds the leftover tails with exponential-sum estimates, which is where the logarithmic factor arises. 
In \cite[Theorem A]{HL1923}, they remove the $\log |t|$ factor and apply the result to divisor problems. The method starts from the contour-integral representation of zeta and evaluates it by the saddle point, so that the remainder is an asymptotic expansion in negative powers of $(t/2\pi)$, and thus without $\log t$-factor. 
Both versions are classical (see \cite[Theorem 4.13, Theorem 4.15]{Titchmarsh}). 

\smallskip
Simoni\v{c} \cite[Theorem 4]{Simonic2020} gives an explicit version of the log-free form. We give here an explicit version of \cite[Lemma 15]{HL1921}-\cite[Theorem 4.13]{Titchmarsh}, based on the van der Corput estimates of \thmref{thm-VDC}. 
Although our error term retains a factor $\log x$ (respectively $\log y$), its constants are small enough (see \corref{cor-AFE2}) that it improves on \cite[Theorem~4]{Simonic2020} in a range made explicit in \corref{cor-k-AFE2}.
\subsection{Statement of result}
\begin{theorem} \label{thm-AFE2}.
Let $s = \sigma + it$ with $ 1/2\leq \sigma \leq 1$ and $|t| \geq t_0 \geq 2\pi$. Also, assume that $x,y$ are in $\mathbb{Z}+\frac12$ and satisfy $2\pi xy = |t|$ and $x,y\geq h\geq 1.5$. 
We have the Approximate Functional Equation:
\begin{equation}\label{AFE2-explicit}
\zeta(s) = \sum_{1 \le n \le x} n^{-s} + \chi(s)\sum_{1 \le m \le y} m^{s-1} + \mathcal{E}(\s,t,x,y),
\end{equation} 
where the error term $\mathcal{E}(\s,t, x, y)$ satisfies the following bounds:
\begin{equation}\label{bnd-E-all-x-y}
\left|\mathcal{E}(\s,t, x, y)\right|
\leq 
\begin{cases}
  \Big(  \frac{\log y}{\pi}+ \mathcal{E}_0(\sigma,h,t_0) \Big) \Big(\frac{t}{2\pi}\Big)^{1/2-\sigma} y^{\sigma-1}  &\text{if}\ x \ge  y, \\
\left(\frac{C_0(\sigma,t_0)}{\pi} (\log x) +\mathcal{E}_0(\sigma, h, t_0) \right)x^{-\s} &\text{if}\ x < y.
\end{cases}
\end{equation}
with
\begin{equation}\label{def-E0-all-x-y}
    \mathcal{E}_0(\s,h,t_0) =
\begin{cases}
    A_{0}(1-\sigma,h,t_0)C_0(\sigma,t_0)  + B_{0}(1-\sigma,t_0) &\text{if}\ x \ge y,
  \\
  A_0(\s,h,t_0)  + C_0(\s, t_0) B_0(\s,t_0) \ &\text{if}\ x < y.
\end{cases}
\end{equation}
Here, $(A_0, B_0, C_0)$ are defined in \eqref{def-A0},  \eqref{def-B0}, and \eqref{def-C0} respectively. 
\end{theorem}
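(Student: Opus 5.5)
We follow the classical route of \cite[Theorem 4.13]{Titchmarsh}, but insert the explicit estimates of \thmref{thm-VDC}, Part~II, and Lemma~\ref{lem:u-s}. By conjugation we may assume $t>0$. We treat the case $x<y$ first; the case $x\ge y$ follows by applying the same argument to $\zeta(1-s)$ and using $\zeta(s)=\chi(s)\zeta(1-s)$. In the dual case the roles of $x,y$ and of $\sigma,1-\sigma$ are swapped, which is why $A_0(1-\sigma,\cdot)C_0$ and $B_0(1-\sigma,\cdot)$ appear in \eqref{def-E0-all-x-y}.

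\textbf{Step 1: truncate and apply Part~II.} Fix $x<y$ and a large $M\in\Z+\tfrac12$. Starting from the Euler--Maclaurin identity \eqref{Euler-Zeta-EF1} (with $M\to\infty$ at the end), we write
\[
\zeta(s)-\sum_{n\le x}n^{-s}=\sum_{x<n\le M}n^{-s}+\frac{M^{1-s}}{s-1}+\mathcal{O}^*\Big(\frac{1}{2M^\sigma}+\frac{|s|}{2\sigma M^\sigma}\Big).
\]
We then apply \thmref{thm-VDC}, Part~II, to the conjugated sum with $a=x$, $b=M$, $N=0$, $g(u)=u^{-\sigma}$ and $f(u)=\frac{t}{2\pi}\log u$. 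Here $f'(x)=y$, and $y\in\Z+\tfrac12$ gives $\delta=\tfrac12$, so Corollary~\ref{cor-thmVDC-partII} applies. We must check the monotonicity hypotheses on $|g'|$, $g''$, $|f''|$ and $|g'f'+gf''|$; this is routine for power and log functions. The error $T_0(x,M)$ from \eqref{def-TNab-integer+1/2-partII} is $\ll x^{-\sigma}$ with no logarithm, and it defines the $B_0(\sigma,t_0)$-type contribution. The main term is
\[
R_0=\sum_{0\le\nu\le\lfloor y\rfloor}\int_x^M u^{-s}e^{2\pi i\nu u}\,du .
\]
The $\nu=0$ term cancels the $M^{1-s}/(s-1)$ term and leaves $x^{1-s}/(s-1)$, which is $\le x^{-\sigma}\cdot x/t=x^{-\sigma}/(2\pi y)$.

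\textbf{Step 2: extract $\chi(s)\sum m^{s-1}$.} For $1\le m<y$ we write
\[
\int_x^M=\int_0^\infty-\int_0^x-\int_M^\infty .
\]
The complete integral is exact:
\[
\int_0^\infty u^{-s}e^{2\pi imu}\,du=\Gamma(1-s)\Big(\frac{2\pi m}{i}\Big)^{s-1}...
\]
more precisely it equals $\Gamma(1-s)(-2\pi i m)^{s-1}$ with the appropriate branch, which is $m^{s-1}\Gamma(1-s)(2\pi/i)^{s-1}$. Lemma~\ref{lem:gamma-chi2} converts this to $\chi(s)m^{s-1}$ up to a relative error $O(e^{-\pi t})$. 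Summed against $|\chi(s)|\le C_0$ via Lemma~\ref{lem:bndchi-Simonic}, this contributes a negligible amount to $A_0$, since $\sum_{m<y} m^{\sigma-1}\le y^{\sigma}$ is killed by $e^{-\pi t}$. The tails $\int_M^\infty$ vanish as $M\to\infty$ by \eqref{est I3}. The pieces $\sum_{m<y}J(0,x,m)$ are bounded by \eqref{est I2}, which gives
\[
x^{-\sigma}\Big(\frac{\log y}{\pi}+O(1)\Big).
\]

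\textbf{Step 3: where the log becomes $\log x$.} This is the main obstacle. Step~2 naturally yields $\log y$, yet the statement for $x<y$ carries $\frac{C_0}{\pi}\log x$, multiplied by $C_0$, which signals a pass through $\chi$. So for $x<y$ I would instead run the argument on the dual side. I would apply Part~II to the tail $\sum_{y<m}m^{s-1}$ of $\zeta(1-s)$, namely with $g=u^{\sigma-1}$ and $f'(y)=x$, and then multiply by $\chi(s)$. This puts the $\log$ term on $\log x$ with a factor $C_0(\sigma,t_0)$ from Lemma~\ref{lem:bndchi-Simonic}. The conversion
\[
|\chi(s)|\,(t/2\pi)^{1/2-(1-\sigma)}\,x^{-\sigma}\ \le\ C_0\,x^{-\sigma}
\]
should follow from $(t/2\pi)^{1/2-\sigma}y^{\sigma-1}=x^{1-\sigma}y^{-\sigma}$ after an exchange of $x$ and $y$. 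I expect this bookkeeping to be the delicate point: one must decide which side carries the $\log$ and which the log-free Part~II error, and check that the exponents match.

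\textbf{Step 4: collect constants.} Finally we collect all $O(1)$ contributions into $\mathcal{E}_0$. These are the constants in \eqref{est I2}, the boundary term $x^{-\sigma}/(2\pi y)$, the $\Gamma\to\chi$ error, and the Part~II constant evaluated at $f'(a)\ge h$. We define $A_0$ as the part coming from the integrals and $B_0$ as the part coming from $T_0$, and we use that $x,y\ge h$ to make each of them monotone in $h$. Bounding uniformly over $\sigma\in[\tfrac12,1]$ gives the two cases of \eqref{bnd-E-all-x-y}.
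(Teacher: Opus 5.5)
Your machinery is the paper's: Part~II of \thmref{thm-VDC} applied to $\sum_{x<n\le M}n^{-s}$ with $\delta=\tfrac12$, the $\nu=0$ term cancelling and leaving $x^{1-s}/(s-1)$, the split $J(x,M,m)=J(0,\infty,m)-J(0,x,m)-J(M,\infty,m)$, Lemmas~\ref{lem:gamma-chi2} and~\ref{lem:u-s}, and the functional equation for the other range. The genuine gap is the case structure. You flag it as the main obstacle but do not resolve it, and your Step~1 claim is false in exactly the case where you make it. With $a=x$ you have $f'(x)=y$, $|f''(x)|=t/(2\pi x^2)$ and $H(x)=(\sigma+t)x^{-\sigma-1}$. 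With $\mathcal{E}_2(\tfrac12,y)\approx\tfrac{230}{27}$, the last term of the bound \eqref{def-TNab-integer+1/2-partII} is then about
\[
\frac{115}{54\pi^3}\,\frac{\sigma+t}{x^2}\,x^{-\sigma}\approx\frac{115}{27\pi^2}\,\frac{y}{x}\,x^{-\sigma}.
\]
This is $O(x^{-\sigma})$ only when $t/x^2\le 2\pi$, i.e.\ $x\ge y$. So for $x<y$ the direct argument cannot give an error $(\cdots+\mathcal{E}_0)x^{-\sigma}$ with $\mathcal{E}_0$ independent of $y/x$. That, and not only where the logarithm lands, forces the direct argument for $x\ge y$ and the dual one for $x<y$. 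Your opening plan (direct for $x<y$, then $x\ge y$ by duality) is the reverse. Once Step~3 moves $x<y$ to the dual side, nothing in the proposal handles $x\ge y$.

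Here is bookkeeping that works.
\begin{enumerate}
\item Run Steps~1--2 for arbitrary $x,y$ to get
\[
|\mathcal{E}|\le\Big(\frac{\log y}{\pi}+A\Big)x^{-\sigma}+|\chi(s)|\,B\,y^{\sigma-1},
\]
where $B$ is the $e^{-\pi t}$ error coming from Lemma~\ref{lem:gamma-chi2}.
\item For $x\ge y$, use $t/x^2\le 2\pi$ to get $A\le A_0(\sigma,h,t_0)$. Then use $x^{-\sigma}=(t/2\pi)^{-\sigma}y^{\sigma}\le(t/2\pi)^{1/2-\sigma}y^{\sigma-1}$, which holds since $y\le\sqrt{t/2\pi}$, together with $|\chi(s)|\le C_0(t/2\pi)^{1/2-\sigma}$.
\item For $x<y$, apply the unconverted bound to $1-s$ with $x\leftrightarrow y$; this is legitimate because now $t/y^2\le 2\pi$. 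Multiply by $\chi(s)$. The $B$-term loses its $\chi$ because $|\chi(s)\chi(1-s)|=1$. For the rest, $|\chi(s)|y^{\sigma-1}\le C_0x^{-\sigma}(x/y)^{1/2}\le C_0x^{-\sigma}$ gives $\frac{C_0}{\pi}\log x$ and $C_0A_0(1-\sigma,h,t_0)+B_0(1-\sigma,t_0)$.
\end{enumerate}

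This shows the two case labels in \eqref{def-E0-all-x-y} are interchanged relative to what the proof produces. The logarithmic terms in \eqref{bnd-E-all-x-y} are the reliable guide.

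It also shows your identification of $A_0$ and $B_0$ is reversed.
\begin{itemize}
\item $B_0$ must be the $\Gamma\to\chi$ error. It is the only term carrying $|\chi(s)|$, hence a factor $C_0$ in one case and none in the other.
\item The Part~II error $T_0$, the term $x^{-\sigma}/(2\pi y)$ and the constants of \eqref{est I2} make up $A_0$, and $T_0$ supplies most of its size.
\end{itemize}

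Two smaller slips:
\begin{itemize}
\item $\sum_{m\le y}m^{\sigma-1}\le y^{\sigma}$ fails at $\sigma=\tfrac12$ (e.g.\ $y=2.5$); use $y^{\sigma}\log y+1$ instead.
\item $(t/2\pi)^{1/2-\sigma}y^{\sigma-1}=x^{1/2-\sigma}y^{-1/2}$, not $x^{1-\sigma}y^{-\sigma}$.
\end{itemize}
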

\begin{remark}
We note that we have the following approximations when $t_0$ is large enough, and that they are valid uniformly for values of $\sigma \in[1/2,1]$:
\[
C_0(\s,t_0)\approx 1\ \text{ and }\ B_0(\s,t_0)\approx 0,
\]
rendering $A_0(\s,h,t_0)$ (when $x > y$) and $A_0(1-\s,h,t_0) C_0(\s,t_0)$ (when $x < y$) practically identical to $\mathcal{E}_0(\s,h,t_0)$.
In addition, 
\begin{align*}
\mathcal{E}_0 \approx A_0 &\approx \frac{1}{4} + \frac{1}{\pi} \Big(\gamma + \frac{7}{2}\log 2 - \frac{3}{2}\Big) + \frac{115}{27\pi^2} 
+  \frac{7}{4\pi h}+ \frac{3}{4\pi(1+h)}  + \frac{7}{8\pi h^2}  \\
&\approx  1.160046+ \frac{0.557042}{h} + \frac{0.238732}{1+h}+\frac{0.278521}{h^2}
\\& \approx \begin{cases}
 1.750688&\ \text{if}\ h = 1.5\ (\text{cases } x\neq y),\\
1.160046 &\ \text{if}\ h \asymp \sqrt{t_0} \ (\text{case } x= y).
\end{cases}
\end{align*}
This leads to the following estimate for error term \eqref{bnd-E-all-x-y}:
\begin{equation}
\left|\mathcal{E}(\s,t, x, y)\right|
\leq 
\begin{cases}
  \left(\frac{(\log x)}{\pi}  +  1.750688 \right)x^{-\s} &\text{if}\ x > y,  \\
  \left(\frac{(\log x)}{\pi}  +1.160046 \right)x^{-\s} &\text{if}\ x = y,  \\
\Big(  \frac{\log y}{\pi}+ 1.750688  \Big) \Big(\frac{t}{2\pi}\Big)^{1/2-\sigma} y^{\sigma-1}  &\text{if}\ x<  y.
\end{cases}
\end{equation}
Thus, for $\max(\log x, \log y)$ small enough, our error term competes with previous work. 
\corref{cor-AFE2} and \corref{cor-k-AFE2} provide precise numerical insight about this. 
\end{remark}
\subsection{Proof of \thmref{thm-AFE2}}\begin{proof}
 We recall that $s = \sigma + it$, with $0 \leq \sigma \leq 1$ and $t > t_0  \geq 2\pi$. 
 Let $h \geq 1$.
 We assume $x$ and $y$ are positive numbers in $\mathbb{Z}+1/2$ satisfying the relation $2\pi xy = |t|$, along with $x, y \geq h$. 
For a sufficiently large integer $N \geq 1$, the Riemann zeta function $\zeta(s)$ can be expressed as:
\begin{equation}\zeta(s) = \sum_{1 \leq n \leq x} \frac{1}{n^s} + \sum_{x < n \leq N} \frac{1}{n^s} + \bigg( \zeta(s) - \sum_{1 \leq n \leq N} \frac{1}{n^s} \bigg) .\label{eq:zeta_split}
\end{equation}
The central sum \begin{equation}
\overline{S}=\overline{\sum_{x < n \leq N} n^{-s}}=\sum_{x < n \leq N} n^{-\s}e^{it \text{ log}n}=\sum_{x<n\leq N} n^{-\s}e^{2\pi i (\frac{t \textbf{log }n}{2 \pi})} \label{S.1}
\end{equation}
is analyzed by applying the explicit Van der Corput B established in \thmref{thm-VDC} (Part-II).
We must verify that the functions 
   \[
g(u) = u^{-\sigma} \ \text{and}\ 
f(u) = \frac{t \log u}{2\pi} 
   \]     
satisfy the necessary smoothness and monotonicity conditions on the interval $[x, N]$. 
\smallskip

For $u \in [x, N]$ and $s = \sigma + it$ with $|t| > 2\pi$ and $\sigma > 0$:
\begin{itemize}
    \item 
    We observe that $f'(u)$ is positive and strictly decreasing, while the magnitude of its second derivative, $|f''(u)| = \frac{t}{2\pi u^2}$, is likewise positive and steadily decreasing.
    \item 
    For $\sigma > 0$, $g(u)$ is a positive, decreasing function. Its first two derivatives $|g'(u)| = \sigma u^{-\sigma-1}$ and $g''(u) = \sigma(\sigma+1)u^{-\sigma-2}$ are both positive and strictly decreasing on $[x, N]$.
    \item Additionally, we note that 
    $  |g'(u)f'(u) + g(u)f''(u) 
    = \left|-\frac{t(\sigma+1)}{2\pi u^{\sigma+2}}\right|$ is  strictly positive and monotonically decreasing on $[x, N]$
\end{itemize}
Thus, the expression for 
$S$ from \eqref{S.1} become:
\begin{equation}
\begin{split}
S= \overline{R_M(x,N)}+\mathcal{O}^*(\overline{T_M(x,N)})
\label{Euler-Zeta-EF2}
    \end{split}
\end{equation}
where
$R_M(x,N)$ and $T_M(x,N)$ are defined in \eqref{def-RNab} and \eqref{def-TNab-integer+1/2-partII}.
Here, $M\leq \lfloor f'(N)\rfloor=\lfloor \frac{t}{2 \pi N} \rfloor  $ is a non-negative integer and $\delta$ is defined as  $\delta = 1- (f'(x)-\lfloor f'(x) \rfloor) = 1 - (y -\lfloor y \rfloor) = 1 - \eta$ . Take $M=0$, we have
\begin{equation}
    S = \overline{R_0(x,N)}+\mathcal{O}^*(\overline{T_0(x,N)})
\label{S}
\end{equation}
We recall that $$\overline{R_0}=\sum_{m=0}^{\lfloor f'(x) \rfloor} \int_x^N u^{-\s}e^{-{2\pi i(\frac{t \log u}{2 \pi }-m u)}} du= \sum_{m=0}^{\lfloor f'(x) \rfloor} \int_x^N u^{-s}e^{2\pi im u} du.$$ We  isolate the $m=0$ term:
\begin{equation}
\int_x^N u^{-s} \,du = \left[\frac{u^{1-s}}{1-s}\right]_x^N = \frac{N^{1-s}}{1-s} - \frac{x^{1-s}}{1-s}=\frac{N^{1-s}}{1-s}+\mathcal{O}^*\bigg(\frac{x^{1-\sigma}}{|t|}\bigg). \label{eq:int_xtoN(m=0)}
\end{equation}
Using \eqref{Euler-Zeta-EF1} from \thmref{thm-AFE1} and
rearranging to get expression for  $\frac{N^{1-s}}{1-s}$:
\begin{equation}
    \frac{N^{1-s}}{1-s}=\sum_{n\leq N}\frac{1}{n^{s}}-\zeta(s)+\mathcal{O}^* \bigg(\frac{1}{2N^{\s}}+\frac{|s|}{2\s N^{\s}}\bigg),\label{T0-sum-xtoM-Euler-Zeta-EF2}
\end{equation}
and substitute it into \eqref{eq:int_xtoN(m=0)}
\begin{equation}
    \int_x^Nu^{-s}du=\sum_{n\leq N}\frac{1}{n^{s}}-\zeta(s)+\mathcal{O}^* \bigg(\frac{1}{2N^{\s}}+\frac{|s|}{2\s N^{\s}}\bigg)+\mathcal{O}^*\bigg(\frac{x^{1-\sigma}}{|t|}\bigg).\label{eq:bnd_int_xtoN}
\end{equation}
Together with \eqref{eq:bnd_int_xtoN} 
and \eqref{S}, we get:
\begin{equation}
S= \sum_{n\leq N}\frac{1}{n^{s}}-\zeta(s)+\sum_{1\leq m \leq \lfloor\frac{t}{2 \pi x}\rfloor}\int_x^N u^{-s}e^{2 \pi i m u} du +\mathcal{O}^* \bigg(\frac{1}{2N^{\s}}+\frac{|s|}{2\s N^{\s}} \bigg)+ \mathcal{O}^* \bigg(\frac{x^{1-\s}}{|t|}\bigg)+\mathcal{O}^*(\overline{T_0(x,N)}).
\end{equation}
Substituting this sum into \eqref{eq:zeta_split}, the equation for $\zeta(s)$ takes the form:
\begin{equation}
    \zeta(s) = \sum_{1 \leq n \leq x} n^{-s} + \sum_{1\leq m \leq \lfloor\frac{t}{2 \pi x}\rfloor}\int_x^N u^{-s}e^{2 \pi i m u} du   + E_1+E_2+E_3 \label{eq:zeta-E1_E2_E3}
\end{equation}
with
 \begin{align}
 \label{def-E1}
 &       E_1=\mathcal{O}^* {\bigg(\frac{1}{2N^{\s}}+\frac{|s|}{2\s N^{\s}} \bigg)},
\\ & \label{def-E2}
E_2= \mathcal{O}^* \bigg(\frac{x^{1-\s}}{|t|}\bigg),
\\&
\label{def-E3}
E_3=\mathcal{O}^*(\overline{T_0(x,N)})=\mathcal{O}^*(T_0(x,N)),
\end{align}
and $T_0(x,N)\in\R$ is as defined in \eqref{def-TNab-integer+1/2-partII}.
Using the notation \eqref{def-intJabm}, we recognize $J(x,N,m)$ 
as the integral in \eqref{eq:zeta-E1_E2_E3}, 
and rewrite 
\[
J(x,N,m) = J(0,\infty,m) - J(0,x,m) - J(N,\infty,m).
\]
Lemma \ref{lem:u-s} took care of estimating the secondary terms $J(0,x,m)$ and $J(N,\infty,m).$
The central challenge in evaluating the integral $J(0,\infty,m)$ is its complex exponential term. We first relate it to the Gamma function (we use complex contour integration and the change of variable $t = -2\pi imu$:
\begin{equation}
J(0,\infty,m) = \int_{0}^{-i\infty} \left(\frac{-t}{2\pi i m}\right)^{-s} e^{-t} \left(\frac{-                                           1}{2\pi i m}\right) dt  
 = \bigg(\frac{2\pi m}{i}\bigg)^{s-1}\Gamma(1-s). \label{I1 1.1}
 \end{equation}
Lemma \ref{lem:gamma-chi2} then gives
 \begin{equation} \label{new I1}
   J(0,\infty,m) 
   = \chi(s) \left(1 + \mathcal{O}^* \bigg(\dfrac{ e^{-\pi t} }{1 - e^{-\pi t_0}}\bigg)\right) m^{s-1}. 
 \end{equation}
Finally, we evaluate the sum: 
\begin{equation} \label{identity sum J0infinitym}
    \sum_{1 \leq m \leq y} J(0,\infty,m) 
    = 
    \chi(s) \sum_{1 \leq m \leq y} m^{s-1} 
    + \mathcal{O}^* \bigg(|\chi(s)| \frac{e^{-\pi t}}{1 - e^{-\pi t_0}} \sum_{1 \leq m \leq y} m^{s-1} \bigg)
\end{equation}
We bound $\sum_{1 \leq m \leq y} m^{s-1}$ uniformly for $\sigma\in[0,1]$ as follows. 
Separating the term $m=1$ and using 
$m^{\sigma - 1} 
\leq \frac{y^\sigma}{m}$ for $2 \leq m \leq y$, we obtain:
\begin{equation}
\bigg|\sum_{1 \leq m \leq y} m^{s-1} \bigg| 
\leq 
1 + \sum_{2 \leq m \leq y} m^{\sigma-1} 
\leq 1 + y^\sigma \sum_{2 \leq m \leq y} \frac{1}{m} \leq y^\sigma (\log y) + 1.
\end{equation}
Using the above estimate to bound \eqref{identity sum J0infinitym}, we obtain 
 \begin{equation}\label{est I1}
  \begin{split}   \sum_{1 \leq m \leq y-\eta} J(0,\infty,m) = & \chi(s) \sum_{1 \leq m \leq y} m^{s-1} +\mathcal{O}^* \left( |\chi(s)| \,  \frac{e^{-\pi t}}{1 - e^{-\pi t_0}}\, y^{\sigma-1} \left( y\log y + y^{1-\sigma}  \right) \right).
 \end{split}\end{equation}
Together with the bounds \eqref{est I2}, \eqref{est I3}, amd \eqref{est I1} we conclude:
\begin{equation}
   \sum_{1\leq m \leq y }\int_x^N u^{-s}e^{2 \pi i m u} du=
   \chi(s) \sum_{1 \leq m \leq y} m^{s-1} +E_{4}+E_{5}+E_{6},\label{4.59}
\end{equation}
where 
\begin{align}
& \label{def-E45}
    E_{4}=\mathcal{O}^* \left( |\chi(s)| y^{\sigma-1} \left( y\log y + y^{1-\sigma}  \right) \frac{e^{-\pi t}}{1 - e^{-\pi t_0}} \right) ,
\\& \label{def-E6}
  E_5 = \mathcal{O}^*\!\left( x^{-\sigma} \left( 
\frac{1 }{ \pi }\log y + \frac{1}{\pi}\left(\gamma+2\log 2-\frac{3}{2}\right)  +\frac{3}{4\pi y} + \frac{3}{8\pi y^2} \right) \right) ,
\\& \label{def-E7}
    E_{6}=\mathcal{O}^*\bigg( 2N^{-\sigma}\bigg(\log y +1 \bigg) \bigg).
\end{align}
Therefore, \eqref{eq:zeta-E1_E2_E3} becomes
\begin{align}
    \zeta(s) = &\sum_{1 \leq n \leq x} n^{-s} + \chi(s) \sum_{1 \leq m \leq y} m^{s-1} + \sum_{1\le j \le 6} E_j, \label{eq: zeta:E1toE7}
\end{align}
where the $E_j$'s are respectively defined in \eqref{def-E1}, \eqref{def-E2}, \eqref{def-E3}, \eqref{def-E45}, \eqref{def-E6}, and \eqref{def-E7}.
Note that, taking $N\to \infty$, we immediately have 
\begin{equation}
\lim_{N \to \infty}   E_1
=0 \ \text{and}\     
\lim_{N \to \infty}E_{6}
=0.
\end{equation}
For $E_3 = \mathcal{O}^*(T_0(x,N))$ as defined in \eqref{def-TNab-integer+1/2-partII}, we get 

\begin{equation} \label{limE3}
\begin{aligned}
     \lim_{N \to \infty}E_3
     &
= \left(\frac{\log 2}{2\pi} + \frac{1}{2\pi y}\right)  x^{-\sigma} 
+ \frac{(\sigma + t)x^{-\sigma - 1}}{4\pi^2y }\left(\frac{\pi}{2}  + \frac{1}{y(\lfloor y \rfloor+1)} + \log 2+\frac{3}{2(y+1)}\right) \\ 
&  + \frac{(\sigma + t)(\sigma + 1)x^{-\sigma - 2}}{4\pi^3y}  \left(\frac{46}{9} - \frac{\log(\lfloor y \rfloor+ 1) - \frac{1}{\lfloor y \rfloor + 1} - \frac{\Gamma'}{\Gamma}(\frac{1}{2})}{y} + \frac{\log(y + 1) + \gamma}{y} - \frac{1+2y}{2y(1+y)} \right) \\ 
&  + \frac{(\sigma + t)x^{-\sigma - 3}t}{8\pi^4y} \left(\frac{230}{27} - \frac{14}{3y} + \frac{\log(\lfloor y \rfloor+ 1) - \frac{1}{2(\lfloor y \rfloor + 1)} - \frac{\Gamma'}{\Gamma}(\frac{1}{2})}{y^2} + \frac{\log(y+ 1) + \gamma}{y^2} - \frac{3y + 3y^2 + 1}{2y^2(1+y)^2} \right).
\end{aligned}
\end{equation}
We can simplify this expression significantly, as for $y  \geq 1$, the terms in the parentheses
\[ - \frac{\log(\lfloor y \rfloor+ 1) - \frac{1}{\lfloor y \rfloor + 1} - \frac{\Gamma'}{\Gamma}(\frac{1}{2})}{y}+ \frac{\log(y + 1) + \gamma}{y} - \frac{1+2y}{2y(1+y)} \]
and 
\[- \frac{14}{3y} + \frac{\log(\lfloor y \rfloor+ 1) - \frac{1}{2(\lfloor y\rfloor + 1)} - \frac{\Gamma'}{\Gamma}(\frac{1}{2})}{y^2}+\frac{\log(y+ 1) + \gamma}{y^2} - \frac{3y + 3y^2 + 1}{2y^2(1+y)^2} \]
are both strictly negative. Dropping these negative parts we replace the bound in \eqref{limE3} by:
\begin{multline}\label{bnd-E3}
 \lim_{N\to \infty} E_3 \leq  \left(\frac{\log 2}{2\pi} + \frac{1}{2\pi y}\right) x^{-\sigma} + \frac{(\sigma + t)x^{-\sigma - 1}}{4\pi^2 y}\left(\frac{\pi}{2} + \frac{1}{y(\lfloor y \rfloor+1)} + \log 2+\frac{3}{2(y+1)}\right)\\
 + \frac{46}{9} \frac{(\sigma + t)(\sigma + 1)x^{-\sigma - 2}}{4\pi^3y} + \frac{230}{27} \frac{(\sigma + t)x^{-\sigma - 3}t}{8\pi^4y} .
\end{multline}
Therefore, \eqref{eq: zeta:E1toE7} can be written as:
\begin{equation}
    \zeta(s) = \sum_{1 \leq n \leq x} n^{-s} + \chi(s) \sum_{1 \leq m \leq y} m^{s-1} +\mathcal{O}^*(\mathcal{E}(\s,t,x,y)), \ \text{for all}\ 0\le \sigma \le 1, \label{eq:AFE-2}
\end{equation}
where
\begin{equation}
    \mathcal{E}(\s,t,x,y)= \frac1{\pi}(\log y)x^{-\s}  +A(\s,t,x,y)x^{-\s}+ |\chi(\sigma+it)| B(\s,t,x,y)    y^{\s-1}. \label{epsilon_eq_A_B}
\end{equation}
Note that $E_2$, $E_3$ and $E_{5}$, as defined in \eqref{def-E2}, \eqref{bnd-E3}, and \eqref{def-E6}, contribute towards the definition of $A$, while $E_{4}$, as given in \eqref{def-E45}, defines $B$:
\begin{equation}\label{def:B(s,t,x,y)}
B(\s,t,x,y)=   \left( y\log y + y^{1-\sigma}  \right) \frac{e^{-\pi t}}{1 - e^{-\pi t_0}}.
\end{equation}
and, since $xy=\frac{t}{2\pi}$ and $\frac{x}{|t|}+\frac{1}{2\pi y} 
= \frac{1}{\pi y} $, then
\begin{multline}
A(\sigma,t,x,y) 
= \frac{1}{\pi y} 
+
\frac{1}{\pi}\left(\gamma+ 2 \log 2-\frac{3}{2}\right)  +\frac{3}{4\pi y} + \frac{3}{8\pi y^2}
 + \frac{2\log 2}{2\pi} \\+ \frac{(\sigma + t)}{2\pi t} \left(\frac{\pi}{2} + \frac{1}{y^2} + \log 2 + \frac{3}{2(y+1)}\right)  + \frac{23}{9\pi^2} \frac{(\sigma+t)(\sigma+1)}{xt} + \frac{115}{54\pi^3} \frac{\sigma+t}{x^2}.
\end{multline}
Regrouping the terms, we obtain:
\begin{multline} \label{def:A(s,t,x,y)}
A(\sigma,t,x,y) 
=
\frac{1}4 +\frac{\gamma+ \frac72\log 2-\frac{3}{2}}{\pi} 
+ \frac{115}{54\pi^3} \frac{t}{x^2}
+ \frac{7}{4\pi y} +  \frac{3}{4\pi(y+1)}
+ \frac{7}{8\pi y^2} 
\\ + \frac{23(\sigma+1)}{9\pi^2 x} 
+ \frac{115 \sigma}{54\pi^3 x^2}
 + \frac{\sigma }{4 t}
 + \frac{\sigma }{2\pi y^2 t} 
 + \frac{ \sigma  (\log 2 )}{2\pi t} 
 + \frac{ 3 \sigma }{4\pi (y+1) t}
 + \frac{23\sigma (\sigma+1)}{9\pi^2} \frac{1}{xt} .
\end{multline}
We now study the bound \eqref{epsilon_eq_A_B} depending on whether $x< y$, $x=y$, or $x>y$. 

\smallskip
\textbf{1. The Region $x > y$}: 
We introduce the parameter $h$ and assume 
\[
1.5 \leq h \leq y  \leq \sqrt{\frac{t}{2\pi}} < x = \frac{t}{2\pi y}, \  t  \geq t_0 > 0, \  \text{and} \ \frac{t}{x^2}  \leq  2\pi.
\]
In this case, $x^{-\s} = \big(\frac{t}{2\pi}\big)^{-\s} y^{\s}\leq \big(\frac{t}{2\pi}\big)^{1/2-\s} y^{\s-1}$, so that \eqref{epsilon_eq_A_B} becomes
\begin{equation}
 \mathcal{E}(\sigma,t,x,y) \leq \Big( \Big(  \frac{(\log y)}{\pi} +A(\sigma,t,x,y) \Big)\Big( \frac{t}{2\pi} \Big)^{1/2-\sigma} + |\chi(\sigma+it)| B(\sigma,t,x,y) \Big) y^{\sigma-1}. 
\end{equation}
We then apply the approximation for $|\chi(s)|$ from Lemma \ref{lem:bndchi-Simonic}, for $\frac{1}{2}\le \sigma\le 1$:
     \[
     |\chi(\sigma+it)| 
 \leq
 C_0(\sigma,t_0)  \Big( \frac{t}{2\pi} \Big)^{1/2-\sigma},
 \]
so that
\begin{equation}
   \mathcal{E}(\sigma,t,x,y) \leq \Big( 
 \frac{(\log y)}{\pi} 
   +A(\sigma,t,x,y)   + C_0(\sigma,t_0) B(\sigma,t,x,y) \Big) \Big( \frac{t}{2\pi} \Big)^{1/2-\sigma} y^{\sigma-1}.
\end{equation}
Since all terms in $A(\sigma,t,x,y)$ decrease with $y$ and $x$ where $y \geq h$ and $x\ge x_0 = \max\left(h, \sqrt{\frac{t_0}{2\pi}}\right)$, we then can bound $A$ and $B$ by the following constants (depending on the values for $h, \sigma$, and $t_0$).
First, we obtain 
\[  A(\s,t,x,y) \leq A_0(\sigma,h,t_0),\] with 
\begin{multline}\label{def-A0}
A_0(\sigma,h, t_0) 
=
\frac{1}4 +\frac{\gamma+ \frac{7}{2}\log 2-\frac{3}{2}}{\pi} 
+ \frac{115}{27\pi^2} 
+ \frac{7}{4\pi h} +  \frac{3}{4\pi(h+1)}
+ \frac{7}{8\pi h^2} 
\\+ \frac{23(\sigma+1)}{9\pi^2 x_0 } 
+ \frac{115 \sigma}{54\pi^3 x_0^2 }
 + \frac{\sigma }{4 t_0}
 + \frac{\sigma }{2\pi h^2 t_0} 
 + \frac{ \sigma  (\log 2 )}{2\pi t_0} 
 + \frac{ 3 \sigma }{4\pi (h+1) t_0}
 + \frac{23\sigma (\sigma+1)}{9\pi^2} \frac{1}{x_0  t_0} .
\end{multline}
In addition, since $y (\log y)+y^{1-\sigma} \leq \frac{1}{2} \sqrt{\frac{t}{2\pi}} \log\left(\frac{t}{2\pi}\right) + \big(\frac{t}{2\pi}\big)^{(1-\sigma)/2} $, where $1-\sigma\geq 0$ then 
\[
\big(y (\log y)+y^{1-\sigma}\big) e^{-\pi t} 
\leq \bigg(\frac{1}{2} \sqrt{\frac{t}{2\pi}} \log\Big(\frac{t}{2\pi}\Big) + \big(\frac{t}{2\pi}\big)^{(1-\sigma)/2}\bigg)  e^{- \pi t}.
\]
As the right expression decreases with $t>2\pi$, 
then $B(\sigma,t,x,y) \leq B_0(\sigma,t_0) $ with
\begin{equation}
\label{def-B0}
B_0(\sigma,t_0) = \frac{\left(\frac{1}{2} \sqrt{\frac{t_0}{2\pi}} \log\left(\frac{t_0}{2\pi}\right) + \big(\frac{t_0}{2\pi}\big)^{(1-\sigma)/2}\right)  e^{- \pi t_0}}{(1 - e^{-\pi t_0}) }.
\end{equation}
Therefore 
  \begin{equation}\label{A+B:case-x>y}
  \mathcal{E}(\sigma,t,x,y) 
  \leq \Big( \frac{(\log y)}{\pi} 
  + \mathcal{E}_0(\sigma,h,t_0) \Big) \Big(\frac{t}{2\pi}\Big)^{1/2-\sigma} y^{\sigma-1}  ,
    \end{equation}
where
\begin{equation}\label{def-E0x>y}
    \mathcal{E}_0(\s,h,t_0) = A_0(\s,h,t_0)  + C_0(\s, t_0) B_0(\s,t_0).
\end{equation}

\smallskip
\textbf{2. The Region $x = y$}:  
In this case, the bound \eqref{A+B:case-x>y}
is still valid, and has the specific shape 
 \begin{equation}\label{A+B:case-x=y}
 \mathcal{E}(\sigma,t,x,y) \leq \Big( \frac{(\log y)}{\pi} + \mathcal{E}_0\left(\sigma, h, t_0\right)\Big) y^{-\sigma} ,\end{equation}
where $h = \lfloor \sqrt{\frac{t_0}{2\pi}}\rfloor +1/2$, and $\mathcal{E}_0$ is given in \eqref{def-E0x>y}. 

\smallskip
\textbf{3. The Region $y>x$}:
Here, we introduce $h\geq 1.5$ such that:
\[
h \leq x \leq \sqrt{\frac{t}{2\pi}} < y = \frac{t}{2\pi x}, \quad t  \geq t_0 > 0.
\]
We rewrite \eqref{eq:AFE-2} for $\zeta(1-\sigma)$, swap $x$ and $y$, 
and multiply the equation by $\chi(s)$. Taking advantage of the functional equation $\zeta(s) = \chi(s)\zeta(1-s)$ and of the identity $\chi(s)\chi(1-s) = 1$, we obtain:
 \begin{equation}
    \zeta(s) = \sum_{m \leq x} m^{-s} + \chi(s)\sum_{n \leq y} n^{s-1} + \mathcal{O}^*( |\chi(s)|\mathcal{E}(1-\sigma, t, y, x)) \label{zeta:swap_x&y}
    \end{equation}
where, thanks to \eqref{epsilon_eq_A_B}, 
\begin{equation} \label{bnd-mathcalE}
\mathcal{E}(\sigma, t, x, y)
\leq |\chi(s)|\mathcal{E}(1-\sigma, t, y, x) = \left( \Big( \frac{(\log y)}{\pi}
+A(1-\sigma, t, y, x) \Big)\frac{|\chi(s)|y^{\sigma-1}}{x^{-\s}} + B(1-\sigma, t, y, x) \right)x^{-\s}.
 \end{equation}
Using Lemma \ref{lem:bndchi-Simonic} together with $ \frac{|t|}{2\pi }=xy$ with 
$\left( xy \right)^{-\sigma + 1/2}
 \frac{ y^{\sigma-1}}{x^{-\sigma}}
 = \big(\frac{x}y\big)^{1/2} <1 $, we find
\[
\frac{|\chi(s)|y^{\sigma-1}}{x^{-\sigma}} \leq
C_0(\sigma,t_0)  \left( xy \right)^{-\sigma + 1/2}
 \frac{ y^{\sigma-1}}{x^{-\sigma}}
 \leq
C_0(\sigma,t_0) .
\]
Finally, we note that the bounds $(A_0,B_0)$ for $(A,B)$ as defined in \eqref{def-A0} and \eqref{def-B0} are still valid. Consequently, for $y > x$ and $1/2\leq \sigma \leq 1$, \eqref{bnd-mathcalE} gives
\begin{equation}
\mathcal{E}(\sigma, t, y, x) \leq 
\left(
\frac{C_0(\sigma,t_0)}{\pi} (\log x)
+\mathcal{E}_0(\sigma, h, t_0) \right)x^{-\s},
\end{equation}
where here we define
\begin{equation}
\mathcal{E}_0(\sigma, h, t_0)
=A_{0}(1-\sigma,h,t_0)C_0(\sigma,t_0)  + B_{0}(1-\sigma,t_0) .
\end{equation}
\end{proof}
\begin{proof}[Proof of Corollary \ref{cor-AFE2} and \ref{cor-k-AFE2}]
For Corollary \ref{cor-AFE2}, for the stated values of $h$ and $t_0$, we find the maximum values that define $\epsilon_0$ and $\delta_0$ using Brent's method \cite{Brent}. For Corollary \ref{cor-k-AFE2}, we note that, for $h_k\le \min(x,y)\le H_k$, we have $\min(\log x,\log y)\le k$. We use the value of $h=h_k$ to compute the corresponding value of $\epsilon_0$, and the value of $\epsilon_k$ follows directly. Python code is attached to the arXiv version of this paper. 
\end{proof}
\bibliographystyle{plain}
\label{section-biblio}
\bibliography{AFE2026ago31.bib}
\ \newpage
\appendix
\section{Proof of technical lemmas} \label{secn:proofs-lemmas}
We prove here the Lemmas stated in Section \ref{secn:prel-lemmas}.
\subsection{Preliminary results about the Digamma function}\label{sec:digamma}
We recall the definition of the Digamma function $\frac{\Gamma'}{\Gamma}(z)$, where 
\[
\Gamma(z) = \int_0^{\infty} t^{z-1}e^{-t}dt\ \text{for }\Re z>0.
\]
For $x>0$, we rely on the following key inequality (see \cite{abramowitz1965handbook}):
\begin{equation}
    \log x - \frac{1}{x} < \frac{\Gamma'}{\Gamma}(x) < \log x -  \frac{1}{2x}. \label{digamma1}
\end{equation}
The Digamma function has a well-known series representation, which is valid for all $x>-1$ 
\begin{equation}
\frac{\Gamma'}{\Gamma}(1+x) = -\gamma + \sum_{n=1}^{\infty} \frac{x}{n(n+x)}
= -\gamma + \sum_{n=1}^{\infty} \left( \frac{1}n  - \frac1{(n+x)} \right) ,\label{digamma2}
\end{equation}
where $\gamma$ is the Euler-Mascheroni constant. 
\begin{equation}
\sum_{k=1}^{\infty} \left(\frac{1}{k+a} - \frac{1}{k+b}\right) 
= \frac{\Gamma'}{\Gamma}(b+1) - \frac{\Gamma'}{\Gamma}(a+1).
\label{digamma3}
\end{equation}
Finally, the function satisfies the functional equation
\begin{equation}\label{fe-digamma}
\frac{\Gamma'}{\Gamma}(x+1)=\frac1x +\frac{\Gamma'}{\Gamma}(x).
\end{equation}
 and the Digamma Duplication Formula 
 \begin{equation}
    \frac{\Gamma'}{\Gamma}(2z) = \frac{1}{2} \frac{\Gamma'}{\Gamma}(z) + \frac{1}{2} \frac{\Gamma'}{\Gamma}(z + \frac{1}{2}) + \log 2. \label{gauss digamma}
\end{equation} 
These identities are useful for evaluating various quantities arising in our proof, including to estimate the following sums. 

\subsection{Estimates of harmonic-like sums}
For $k=1,2,3$, and $N$ a positive integer, we study sums of the shape
\[
\sum_{\nu>N} \frac{1}{\nu(\nu\pm y)^k} .
\]
\begin{proof}[Proof of Lemma \ref{lem:harmonic-lead}]\ 
\begin{itemize}
\item{\it Bounding sums of the shape $\sum_{\nu>N}\frac{1}{\nu(\nu-y)^k} $ for $k=0,1,2$:}\\
Using partial fraction decomposition, 
and \eqref{digamma3}, we can rewrite
\begin{equation}\label{sum-nu(nu-y)}
\sum_{\nu>N}\frac{1}{\nu(\nu-y)} 
= \frac1{y} \sum_{\nu  \geq 1 } \left(\frac1{\nu+N-y}-\frac1{\nu+N}\right) 
= \frac1y \left( \frac{\Gamma'}{\Gamma}(N+1)-\frac{\Gamma'}{\Gamma}(N+1-y)\right).  
\end{equation}
Substituting the standard bounds \eqref{digamma1} into \eqref{sum-nu(nu-y)} and setting $\Delta = N+1-y$, we establish:
\begin{equation}
\begin{split}\label{sum-nu(nu-y)-b}
    \frac{1}{y} \Big( \log(N+1)-\frac{1}{N+1} - \frac{\Gamma'}{\Gamma}(\Delta)\Big)  &\leq \sum_{\nu>N}\frac{1}{\nu(\nu-y)} \\&
    \leq \frac{1}{y} \Big( \log(N+1)-\frac{1}{2(N+1)} - \frac{\Gamma'}{\Gamma}(\Delta)\Big),
\end{split}   
\end{equation}
which achieves proving \eqref{bnd-sum-nu(nu-y)blower-bupper}.

Before moving onto higher power of $(\nu-y)$, we note that, for $k \geq 2$, 
\[
\sum_{\nu>N} \frac1{(\nu-y)^k} 
=\frac1{(N+1-y)^k}+\frac1{(N+2-y)^k}+ \sum_{\nu>N+2} \frac1{(\nu-y)^k}.
\]
Standard integral comparison estimates directly imply:
\begin{equation}\label{sum-(nu-y)^2}
\frac{1}{\Delta^2} + \frac{1}{\Delta+1} < \sum_{\nu>N} \frac{1}{(\nu-y)^2} < \frac{1}{\Delta^2} + \frac{1}{(\Delta+1)^2} + \frac{1}{\Delta+1},
\end{equation}
and
\begin{equation}\label{sum-(nu-y)^3}
\frac{1}{\Delta^3} + \frac{1}{2(\Delta+1)^2} < \sum_{\nu>N} \frac{1}{(\nu-y)^3} < \frac{1}{\Delta^3} + \frac{1}{(\Delta+1)^3} + \frac{1}{2(\Delta+1)^2}.
\end{equation}
As partial fraction decomposition allows to rewrite
\[
\sum_{\nu>N} \frac{1}{\nu(\nu-y)^2} = - \frac1{y} \sum_{\nu>N}  \frac1{\nu(\nu -y)}+\frac1{y}\sum_{\nu>N}\frac1{(\nu-y)^2},
\]
we conclude to \eqref{bnd-sum-nu(nu-y)^2bupper} by combining \eqref{bnd-sum-nu(nu-y)blower-bupper} and \eqref{sum-(nu-y)^2}. 
Similarly, we combine 
\[
\sum_{\nu>N} \frac{1}{\nu(\nu-y)^3} = \frac1{y^2} \sum_{\nu>N}\frac1{\nu(\nu-y)} - \frac1{y^2} \sum_{\nu>N} \frac1{(\nu-y)^2} + \frac1{y}\sum_{\nu>N} \frac1{(\nu-y)^3}
\]
with \eqref{sum-nu(nu-y)-b}, \eqref{sum-(nu-y)^2} and \eqref{sum-(nu-y)^3} to conclude to \eqref{bnd-sum-nu(nu-y)^3bupper}.
\item{\it Bounding sums of the shape $\sum_{\nu=1}^{\infty}\frac{1}{\nu(\nu+y)^k} $ for $k=0,1,2$:}\\\\
For $k=0$, we proceed as similarly as above: we apply partial fraction decomposition, recognize Digamma terms using \eqref{digamma2}, and applying the dedicated bounds \eqref{digamma1}. 
We obtain
\begin{equation}\label{eq:explicit_bound_plus_y}
\begin{aligned}
\sum_{\nu=1}^\infty \frac{1}{\nu(\nu+y)} 
= \frac{1}{y} \sum_{\nu=1}^\infty \left( \frac{1}{\nu} - \frac{1}{\nu+y} \right) = \frac1{y}\Big(\frac{\Gamma'}{\Gamma}(y+1)+\gamma\Big)
\leq \frac{\log(y+1) + \gamma}{y} - \frac{1}{2y(y+1)}.
\end{aligned}
\end{equation}
In addition, for $k=2$, we utilize the decomposition and the integral lower bound $\sum_{\nu=1}^\infty \frac{1}{(\nu+y)^2} \geq \frac{1}{(1+y)}$. We obtain
\begin{equation}
\sum_{\nu=1}^\infty \frac{1}{\nu(\nu+y)^2} 
= \frac{1}{y} \sum_{\nu=1}^\infty \frac{1}{\nu(\nu+y)} - \frac{1}{y} \sum_{\nu=1}^\infty \frac{1}{(\nu+y)^2} 
\leq \frac{\log(y+1)+ \gamma}{y^2} - \frac{1+2y}{2y^2(y+1)}.\label{eq:square_residual_bound}
\end{equation}
Finally, for $k=3$, we also use the integral lower bound $\sum_{\nu=1}^\infty \frac{1}{(\nu+y)^{3}} \geq \frac{1}{2(1+y)^2}$, so that:
\begin{equation}
\begin{aligned}
\sum_{\nu=1}^\infty \frac{1}{\nu(\nu+y)^3} &= \frac{1}{y^2} \sum_{\nu=1}^\infty \frac{1}{\nu(\nu+y)} - \frac{1}{y^2} \sum_{\nu=1}^\infty \frac{1}{(\nu+y)^2} - \frac{1}{y} \sum_{\nu=1}^\infty \frac{1}{(\nu+y)^3} \\
&\leq 
\frac{\log(y+1)+\gamma}{y^3} - \frac{1+3y+3y^2}{2y^3(y+1)^2}.\label{eq:3_residual_bound}
\end{aligned}
\end{equation}
\end{itemize}
\end{proof}
\begin{lemma} \label{lemma_bnd_sum_nu_powers}
Let $N$ be a positive integer and $0<y<N+1$, and $\Delta=N+1-y$. Then
\begin{align}
        \bigg| \sum_{\nu>N}\frac{(-1)^{\nu}}{\nu(\nu-y)} \bigg| \leq &  \frac{1}{y}  \bigg| \frac{\Gamma'}{\Gamma}(\Delta) - \frac{\Gamma'}{\Gamma}\left(\frac{\Delta+1}{2}\right) - \log 2  \bigg|   +\frac{1}{y(N+1)}, \label{bound_sum_minus_nu_nu-y}
\\
        \bigg| \sum_{\nu=1}^\infty\frac{(-1)^{\nu}}{\nu(\nu+y)} \bigg| \leq & \frac{\log 2}{y} + \frac{3}{2y(y+1)} .
        \label{bound_sum_minus_nu_nu+y}
\end{align}
\end{lemma}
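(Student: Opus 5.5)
The plan is to reduce both alternating sums to digamma values via partial fractions, exactly as in the proof of Lemma \ref{lem:harmonic-lead}. The only new ingredients are the classical formula for an alternating sum in terms of $\frac{\Gamma'}{\Gamma}$ at half-arguments, and the duplication formula \eqref{gauss digamma}. For $\Delta>0$ the starting identity is
\[
\sum_{j\ge 0}\frac{(-1)^j}{j+\Delta}=\frac12\Big(\frac{\Gamma'}{\Gamma}\Big(\frac{\Delta+1}{2}\Big)-\frac{\Gamma'}{\Gamma}\Big(\frac{\Delta}{2}\Big)\Big).
\]
It follows by pairing the terms $j=2k$ and $j=2k+1$ and applying \eqref{digamma3} with $a=\frac{\Delta}{2}-1$, $b=\frac{\Delta-1}{2}$.

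For \eqref{bound_sum_minus_nu_nu-y}, write $\frac{1}{\nu(\nu-y)}=\frac1y\big(\frac{1}{\nu-y}-\frac1\nu\big)$, so the sum splits as $\frac1y\big(\sum_{\nu>N}\frac{(-1)^\nu}{\nu-y}-\sum_{\nu>N}\frac{(-1)^\nu}{\nu}\big)$. Each series converges separately as an alternating series.

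In the first series set $\nu=N+1+j$. It becomes $(-1)^{N+1}\sum_{j\ge0}\frac{(-1)^j}{j+\Delta}$. Now eliminate $\frac{\Gamma'}{\Gamma}(\Delta/2)$ using \eqref{gauss digamma} with $z=\Delta/2$:
\[
\frac{\Gamma'}{\Gamma}\Big(\frac{\Delta}{2}\Big)=2\frac{\Gamma'}{\Gamma}(\Delta)-\frac{\Gamma'}{\Gamma}\Big(\frac{\Delta+1}{2}\Big)-2\log 2 .
\]
Hence the alternating sum equals $\frac{\Gamma'}{\Gamma}\big(\frac{\Delta+1}{2}\big)-\frac{\Gamma'}{\Gamma}(\Delta)+\log 2$. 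Its absolute value is precisely the first term of the claimed bound.

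The second series is an alternating series tail with decreasing terms $1/\nu$. Its modulus is therefore at most its first term, $\frac{1}{N+1}$. The triangle inequality then gives \eqref{bound_sum_minus_nu_nu-y}.

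For \eqref{bound_sum_minus_nu_nu+y}, decompose in the same way:
\[
\sum_{\nu\ge1}\frac{(-1)^\nu}{\nu(\nu+y)}=\frac1y\Big(\sum_{\nu\ge1}\frac{(-1)^\nu}{\nu}-\sum_{\nu\ge1}\frac{(-1)^\nu}{\nu+y}\Big)=\frac1y\big(-\log2+T\big).
\]
Here $T=\sum_{\nu\ge1}\frac{(-1)^{\nu+1}}{\nu+y}$. This is an alternating series with decreasing positive terms, so $0<T<\frac{1}{1+y}$. It follows that $|-\log2+T|\le \log 2+\frac{1}{1+y}$, and in fact $\le\log 2$ once $\frac1{1+y}\le 2\log2$, which holds for all $y>0$. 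Either way the result is comfortably below $\log2+\frac{3}{2(y+1)}$, which gives \eqref{bound_sum_minus_nu_nu+y}. One could alternatively express $T$ through digamma values as in the first part, but the crude alternating-series bound already suffices.

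The only step needing care is the first: getting the half-argument digamma formula with the correct sign and offset, then rewriting it through \eqref{gauss digamma} into the stated combination. The parity factor $(-1)^{N+1}$ is harmless because the statement bounds absolute values. Everything else is termwise alternating-series estimates and requires only checking that the terms decrease, which is immediate for $\nu>N\ge y-1$.
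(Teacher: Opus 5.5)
Your proof is correct. For \eqref{bound_sum_minus_nu_nu-y} it is essentially the paper's argument. You use the same partial fractions, the same alternating-series bound $\frac{1}{N+1}$, and the same parity pairing followed by the duplication formula. The paper shifts by $\nu=v+N$ with $v\ge 1$ rather than $\nu=N+1+j$, which only changes the sign factor.

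For \eqref{bound_sum_minus_nu_nu+y} your route genuinely differs. The paper evaluates $\sum_{\nu\ge1}(-1)^\nu/(\nu+y)$ in closed form by parity and duplication, writes the whole sum as a difference of digamma values, and then applies \eqref{digamma1}. That last step is where the term $\frac{3}{2y(y+1)}$ comes from. You leave $T=\sum_{\nu\ge1}(-1)^{\nu+1}/(\nu+y)$ unevaluated and use only $0<T<\frac{1}{1+y}<2\log 2$. This gives the sharper bound $\frac{\log 2}{y}$ with no digamma input.

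Your route also sidesteps a slip in the paper's closed form. Combining the paper's own displayed pieces gives
\[ \sum_{\nu\ge1}\frac{(-1)^\nu}{\nu(\nu+y)}=\frac1y\Bigl(\frac{\Gamma'}{\Gamma}(y+1)-\frac{\Gamma'}{\Gamma}\bigl(\tfrac{y+1}{2}\bigr)-2\log 2\Bigr)=\frac{T-\log 2}{y}. \]
The paper's final identity drops the $-2\log 2$. Its version has the wrong sign for large $y$, where the sum is $\sim -\frac{\log 2}{y}$; for example, at $y=1$ it gives $1$ instead of $1-2\log 2$. The stated inequality survives anyway, precisely because of your observation $|T-\log 2|<\log 2$.

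The paper's approach does aim at an exact digamma expression, which would be useful for evaluation rather than bounding. For the inequality as stated, your argument is shorter, tighter, and avoids that error.
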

\begin{proof}[Proof of Lemma \ref{lemma_bnd_sum_nu_powers}]\ 
\begin{itemize}
\item {\it Bounding the alternating sum  $\sum_{\nu=1}^\infty \frac{(-1)^\nu}{\nu(\nu-y)}$:}
We apply the partial fraction identity:
\begin{equation}
\label{eq:partial_frac}
\frac{1}{\nu(\nu - y)} = \frac{1}{y} \left( \frac{1}{\nu - y} - \frac{1}{\nu} \right).
\end{equation}
Thus
\begin{equation}
\label{eq:sum_split}
\sum_{\nu > N} \frac{(-1)^\nu}{\nu(\nu - y)} = \frac{1}{y} \left[ \sum_{\nu > N} \frac{(-1)^\nu}{\nu - y} - \sum_{\nu > N} \frac{(-1)^\nu}{\nu} \right].
\end{equation}
By the Alternating Series Estimation Theorem, the second term is bounded by its first term:
\begin{equation}
\label{eq:harmonic_bound}
\bigg|  \sum_{\nu > N} \frac{(-1)^{\nu}}{\nu} \bigg|  \leq \frac{1}{N+1}.
\end{equation}
For the first term in \eqref{eq:sum_split}, let $\Delta = N+1-y$. Shifting the index by $\nu = v + N$ yields:
\begin{equation}
\label{eq:shifted_sum}
\sum_{\nu > N} \frac{(-1)^{\nu}}{\nu - y} = (-1)^N \sum_{v=1}^{\infty} \frac{(-1)^v}{v - 1 + \Delta}.
\end{equation}
Using parity in the above infinite sum allows to recognize Digamma terms: 
\begin{equation}
\sum_{v=1}^{\infty} \frac{(-1)^v}{v - 1 + \Delta} 
= \frac{1}{2} \sum_{k=1}^{\infty} \left( \frac{1}{k + \frac{\Delta-1}{2}} - \frac{1}{k + \frac{\Delta-2}{2}} \right) = \frac{1}{2} \left( \frac{\Gamma'}{\Gamma}\left( \frac{\Delta}{2} \right) - \frac{\Gamma'}{\Gamma}\left( \frac{\Delta+1}{2} \right) \right).
\end{equation}
Then, the Digamma Duplication Formula \eqref{gauss digamma} gives:
\begin{equation}
\sum_{v=1}^{\infty} \frac{(-1)^v}{v - 1 + \Delta}  
= \frac{\Gamma'}{\Gamma}(\Delta) - \frac{\Gamma'}{\Gamma}\left( \frac{\Delta+1}{2} \right) - \log 2.
\end{equation}
Substituting this into \eqref{eq:sum_split} leads to the explicit bound:
\begin{equation}
\bigg|  \sum_{\nu > N} \frac{(-1)^\nu}{\nu(\nu - y)} \bigg|  \leq \frac{1}{y} \bigg|  \frac{\Gamma'}{\Gamma}(\Delta) - \frac{\Gamma'}{\Gamma}\left( \frac{\Delta+1}{2} \right) - \log 2 \bigg|  + \frac{1}{y(N+1)}.
\end{equation}

\item{\it Bounding alternating sum  $\sum_{\nu=1}^\infty \frac{(-1)^\nu}{\nu(\nu+y)}$}  

We evaluate the sum $\sum_{\nu=1}^{\infty} \frac{(-1)^\nu}{\nu(\nu+y)}$ for $y > 0$. Using partial fraction decomposition, 
the infinite sum can be written as:
\begin{equation}
\label{eq:sum_plus_y_split}
\sum_{\nu=1}^{\infty} \frac{(-1)^\nu}{\nu(\nu+y)} = \frac{1}{y} \left[ \sum_{\nu=1}^{\infty} \frac{(-1)^\nu}{\nu} - \sum_{\nu=1}^{\infty} \frac{(-1)^\nu}{\nu+y} \right].
\end{equation}
The first term is the alternating harmonic series, which evaluates to:
\begin{equation}
\label{eq:alt_harmonic}
\sum_{\nu=1}^{\infty} \frac{(-1)^\nu}{\nu} = -\log 2.
\end{equation}
For the second sum, we regroup terms by parity and recognize Digamma terms as defined in \eqref{digamma3}:
\begin{equation}
\sum_{\nu=1}^{\infty} \frac{(-1)^\nu}{\nu+y} 
= \frac{1}{2} \sum_{k=1}^{\infty} \left( \frac{1}{k+\frac{y}{2}} - \frac{1}{k+\frac{y-1}{2}} \right)
= \frac{1}{2} \left( \frac{\Gamma'}{\Gamma}\left( \frac{y+1}{2} \right) - \frac{\Gamma'}{\Gamma}\left( \frac{y+2}{2} \right) \right).
\end{equation}
As previously, the Digamma Duplication Formula \eqref{gauss digamma} applies.  
Thus
\begin{equation}
\sum_{\nu=1}^{\infty} \frac{(-1)^\nu}{\nu+y} = \frac{\Gamma'}{\Gamma}\left( \frac{y+1}{2} \right) - \frac{\Gamma'}{\Gamma}(y+1) + \log 2. \label{alt_second_sum}
\end{equation}
Substituting results from \eqref{eq:alt_harmonic} and \eqref{alt_second_sum} back into \eqref{eq:sum_plus_y_split}, yields the final identity:
\begin{equation}
\sum_{\nu=1}^{\infty} \frac{(-1)^\nu}{\nu(\nu+y)} = \frac{1}{y} \left( \frac{\Gamma'}{\Gamma}(y+1) - \frac{\Gamma'}{\Gamma}\left( \frac{y+1}{2} \right) \right).
\end{equation}
Applying the inequalities in \eqref{digamma1}, %
and using that $\frac{\Gamma'}{\Gamma}$ is increasing, we obtain
\begin{equation}
\label{eq:log_bound_plus_y}
\bigg| \sum_{\nu=1}^{\infty} \frac{(-1)^\nu}{\nu(\nu+y)}\bigg|  \leq \frac{\log 2}{y} + \frac{3}{2y(y+1)}.
\end{equation}
For large $y$, this shows that the sum behaves asymptotically as $\frac{\log 2}{y} + \mathcal{O}(y^{-2})$.
\end{itemize}
\end{proof}
\subsection{Estimates of finite exponential sums}
We recall the definitions \eqref{def-Sxy} :
    \begin{equation}
        S_0(x,y) = \sum_{1\leq \nu\leq y} {e^{-2\pi i \nu x}}\ \text{and}\ 
        S_1(x,y) = \sum_{1\leq \nu\leq y} \frac{e^{-2\pi i \nu x}}{\nu}.
\end{equation}
\begin{proof}[Proof of Lemma \ref{lem:geometric}]
    First, note that for $0\leq y<1$, we have $S_0(x,y)=0$, and \eqref{bnd-S_0(y)} is trivially true. Now, for $x\notin \Z$ and $y \geq 1$, we consider $S_0(x,y)$ as a geometric sum, which yields
        \begin{equation*}
            S_0(x,y) = \frac{e^{-2\pi i x}\left( e^{-2\pi i x \lfloor y\rfloor} -1 \right)}
            {e^{-2\pi i x}-1}.
        \end{equation*}
        We take appropriate exponential factors in the numerator and denominator to use the identity $e^{-i\theta}-e^{i\theta}= -2i\,\sin(\theta)$ twice:
        \[
        S_0(x,y) 
        = \frac{e^{-2\pi i x}e^{-\pi i x\lfloor y \rfloor}\left( e^{-\pi i x \lfloor y\rfloor} - e^{\pi i x \lfloor y\rfloor} \right)}
            {e^{-\pi i x}(e^{-\pi i x}-e^{\pi i x})} 
        = \frac{e^{-2\pi i x}e^{-\pi i x\lfloor y \rfloor}}
            {e^{-\pi i x}} \cdot \frac{\sin (\pi x\lfloor y \rfloor)}{\sin(\pi x)}.
        \]
        Inequality \eqref{bnd-S_0(y)} follows immediately.
        \smallskip

        Now we fix a parameter $x\notin \Z$, and use this to express $S_1(x,y)$ as a Riemann-Stieltjes integral: 
        \[
            S_1(x,y) = \int_{1^-}^{y^+} \frac{1}{u} \ d_u S_0(x,u).
        \]
        Here, the notation $d_u S_0(x,u)$ is to clarify that 
        the integration is with respect to the variable $u$. Furthermore, $1^-$ and $y^+$ denote that we take the appropriate directional limits. 
        We then integrate by parts and use that $S_0(x,u)=0$ for $u<1$ to obtain
        \begin{equation*}
            S_1(x,y) = \frac{S_0(x,y)}{y} + \int_1^y \frac{S_0(x,u)}{u^2} \, du.
        \end{equation*}
        and thus
        Applying the bound \eqref{bnd-S_0(y)} for $S_0(x,y)$ yields
        \[
            |S_1(x,y)| \leq \frac{1}{|\sin(\pi x)|}\cdot\frac{1}{y} + 
            \frac{1}{|\sin(\pi x)|}\int_1^\infty \frac{du}{u^2}\\
            =  \frac{1}{|\sin(\pi x)|}\left( \frac{1}{y} +1
            \right).
        \]
        Hence, \eqref{bnd-Zxy} holds.
        \smallskip

        Finally, we consider the case $x=n+1/2$, $n\in\Z$. In this case, for $\nu\in\Z$, we have $e^{-2\pi i \nu x} =(-1)^\nu$. Therefore, 
        \begin{equation*}
            S_1(x,y) = \sum_{1\leq \nu \leq y}\frac{(-1)^\nu}{\nu} = \sum_{\nu = 1}^\infty\frac{(-1)^\nu}{\nu} - \sum_{\nu > y }\frac{(-1)^\nu}{\nu}  = \log 2 - \sum_{\nu > y }\frac{(-1)^\nu}{\nu},
        \end{equation*}
        where 
        \[
           \bigg| 
            \sum_{\nu > y }\frac{(-1)^\nu}{\nu}
            \bigg|  \leq \frac{1}{\lfloor y \rfloor +1} < \frac{1}{y}.
        \]
        Hence, \eqref{bnd-ZxySpecial} holds.
    \end{proof}
\subsection{Estimates of tails of exponential sums}
We study here
\[
\sum_{ \nu > N} \frac{e^{-2\pi i \nu x}}{\nu(\nu-y) }
\ \text{ and }\ \sum_{ \nu=1}^{\infty} \frac{e^{-2\pi i \nu x}}{\nu(\nu+y) }.
\]
\begin{proof}[Proof of Lemma \ref{lemma_bnd_S_plus_minus}]
Let $N$ be a non-negative integer, and $x,y\in\R$ satisfying $x$ is not an integer and $1\leq y< N+1$.
The case $x\in\Z+\frac{1}{2}$ is Lemma \ref{lemma_bnd_sum_nu_powers}, equations \eqref{bound_sum_minus_nu_nu-y} and \eqref{bound_sum_minus_nu_nu+y}. For the generic case, we decompose in partial fractions to obtain 
\begin{equation} \label{eq:split_tails}
\sum_{\nu>N} \frac{e^{-2\pi i \nu x}}{\nu(\nu-y)} = \frac{1}{y} \left( \sum_{\nu=N+1}^{\infty} \frac{e^{-2\pi i \nu x}}{\nu-y} - \sum_{\nu=N+1}^{\infty} \frac{e^{-2\pi i \nu x}}{\nu} \right).
\end{equation}
We estimate each tail in the right-hand side similarly to Lemma \ref{lem:geometric}. For $u\in\R$, let 
\[S(x,u) = \sum_{N+1\leq k\leq u}e^{-2\pi i k x},\]
so that by Lemma \ref{lem:geometric}, we have
\begin{equation*} 
|S(x,u)| \leq \frac{1}{|\sin(\pi x)|},
\end{equation*}
and $S(x,u)=0$ for $u<N+1$.
We integrate by parts with a Riemann-Stieltjes integral, and note that the boundary terms vanish in the respective limits: 
\begin{equation*}
    \bigg| \sum_{\nu=N+1}^{\infty} \frac{e^{-2\pi i \nu x}}{\nu-y}\bigg|  =
    \bigg| 
\int_{N+1^-}^\infty \frac{d_uS(x,u)}{u-y}
    \bigg| 
    = 
    \bigg| 
    \int_{N+1}^\infty \frac{S(x,u)}{(u-y)^2}\, du 
    \bigg| 
    \leq \frac{1}{|\sin(\pi x)|}\left(\frac{1}{N+1-y} \right).
\end{equation*}
Similarly, we estimate the second sum by
\begin{equation}\label{eq:exp/nu}
\bigg| \sum_{\nu=N+1}^{\infty} \frac{e^{-2\pi i \nu x}}{\nu}\bigg|  \le
    \frac{1}{|\sin(\pi x)|(N+1)}.
\end{equation}
Substituting these into \eqref{eq:split_tails} and applying the triangle inequality yields
\begin{equation*}
\bigg|  \sum_{\nu>N} \frac{e^{-2\pi i \nu x}}{\nu(\nu-y)} \bigg|  \leq \frac{1}{y|\sin(\pi x)|} \left( \frac{1}{N+1-y} + \frac{1}{N+1} \right).
\end{equation*}
Analogously, we use the decomposition 
\begin{equation*}
    \sum_{ \nu=1}^{\infty} \frac{e^{-2\pi i \nu x}}{\nu(\nu+y) } = \frac{1}{y} \left( \sum_{\nu=1}^{\infty} \frac{e^{-2\pi i \nu x}}{\nu}  -  \sum_{\nu=1}^{\infty} \frac{e^{-2\pi i \nu x}}{\nu+y}\right).
\end{equation*}
For the second sum in the right-hand side, we similarly obtain 
\begin{equation*}
    \bigg| \sum_{\nu=1}^{\infty} \frac{e^{-2\pi i \nu x}}{\nu+y}\bigg| 
     =
    \bigg| 
\int_{1^-}^\infty \frac{d_uS_0(x,u)}{u+y}
    \bigg| 
    = 
    \bigg| 
    \int_{1}^\infty \frac{S_0(x,u)}{(u+y)^2}\, du 
    \bigg| 
    \leq 
     \frac{1}{|\sin(\pi x)|}\left(\frac{1}{1+y} \right),
\end{equation*}
where $S_0(x,u)$ is defined in \eqref{def-Sxy}. Combining with the estimate \eqref{eq:exp/nu}, we conclude 
\begin{equation*}
    \bigg| 
     \sum_{ \nu=1}^{\infty} \frac{e^{-2\pi i \nu x}}{\nu(\nu+y) } 
    \bigg|  \leq 
    \frac{1}{y|\sin(\pi x)|} \left( 1+\frac{1}{1+y} \right).
\end{equation*}
\end{proof}
\subsection{Approximations for $\chi$ and Gamma}
We recall that the $\chi$ function is defined as in \eqref{def:chi}:
\begin{equation}
    \chi(s) = 2^s \pi^{s-1} \Gamma(1-s) \sin\left(\frac{\pi s}{2}\right). 
\end{equation} 
We can manipulate the definition \eqref{def:chi} to express the Gamma function $\Gamma(1-s)$ as in Lemma \ref{lem:gamma-chi2}.
\begin{proof}[Proof of Lemma \ref{lem:gamma-chi2}]
     Note that 
     \[
     i^{-s+2}=-e^{\frac{-i\pi s}{2}}; \qquad 2i\sin\left(\frac{\pi s}{2} \right) = i^{-s+2}(1-e^{i\pi s}).
     \]
     Inserting the latter equation into $\eqref{def:chi}$, we find 
     \[
     \chi(s) = \left( \frac{2\pi}{i}
     \right)^{s-1}\Gamma(1-s)(1-e^{i\pi s}).
     \]
     Finally, we conclude by using the identity 
     \[
     \frac{1}{1-e^{i\pi s}} = 1+ \frac{e^{i\pi s}}{1-e^{i\pi s}} 
     \]
     together with $|e^{i\pi s}|\leq e^{-\pi t}$.
\end{proof}

\end{document}